\newcommand*{\mailto}[1]{\href{mailto:#1}{\nolinkurl{#1}}}
\newcommand{\arxiv}[1]{\href{http://arxiv.org/abs/#1}{arXiv:#1}}
\newtheorem{theorem}{Theorem}[section]
\newtheorem{lemma}[theorem]{Lemma}
\newtheorem{corollary}[theorem]{Corollary}
\newtheorem{hypothesis}[theorem]{Hypothesis}
\theoremstyle{definition}
\newtheorem{remark}[theorem]{Remark}
\newcommand{\R}{{\mathbb R}}
\newcommand{\C}{{\mathbb C}}
\newcommand{\bbR}{{\mathbb{R}}}
\newcommand{\be}{\begin{equation}}
\newcommand{\ee}{\end{equation}} 
\newcommand{\spr}[2]{\langle #1 , #2 \rangle}
\newcommand{\dbspr}[2]{[ #1 , #2 ]}
\newcommand{\E}{\mathrm{e}}
\newcommand{\im}{\mathrm{Im}}
\newcommand{\re}{\mathrm{Re}}
\newcommand{\dom}[1]{\mathrm{dom}\left(#1\right)}
\newcommand{\no}{\notag}
\newcommand{\lb}{\label}
\newcommand{\ol}{\overline}
\newcommand{\OO}{\mathcal{O}}
\newcommand{\oo}{o}
\newcommand{\bi}{\bibitem}
\DeclareMathOperator{\AC}{AC}
\DeclareMathOperator{\loc}{loc}
\newcommand{\qd}{{[1]}}
\newcommand{\Tmax}{T_{\mathrm{max}}}
\newcommand{\Llocr}{L^1_{\mathrm{loc}}((a,b);r(x)dx)}
\newcommand{\Lr}{L^2((a,b);r(x)dx)}
\newcommand{\Lrmu}{L^2(\R;d\mu)}
\newcommand{\Deftau}{\mathfrak{D}_\tau}
\renewcommand{\Im}{\text{\rm Im}}
\renewcommand{\ln}{\text{\rm ln}}
\newcommand{\vphi}{\varphi}
\newcommand{\lam}{\lambda}
\numberwithin{equation}{section}
\begin{document}

\title[Inverse Spectral Theory]{Inverse Spectral Theory for Sturm--Liouville Operators 
with Distributional Potentials}

\author[J.\ Eckhardt]{Jonathan Eckhardt}
\address{Faculty of Mathematics\\ University of Vienna\\
Nordbergstrasse 15\\ 1090 Wien\\ Austria}
\email{\mailto{jonathan.eckhardt@univie.ac.at}}
\urladdr{\url{http://homepage.univie.ac.at/jonathan.eckhardt/}}

\author[F.\ Gesztesy]{Fritz Gesztesy}
\address{Department of Mathematics,
University of Missouri,
Columbia, MO 65211, USA}
\email{\mailto{gesztesyf@missouri.edu}}
\urladdr{\url{http://www.math.missouri.edu/personnel/faculty/gesztesyf.html}}

\author[R.\ Nichols]{Roger Nichols}
\address{Mathematics Department, The University of Tennessee at Chattanooga, 
415 EMCS Building, Dept. 6956, 615 McCallie Ave, Chattanooga, TN 37403, USA}
\email{\mailto{Roger-Nichols@utc.edu}}
\urladdr{\url{http://rogeranichols.jimdo.com/}} 

\author[G.\ Teschl]{Gerald Teschl}
\address{Faculty of Mathematics\\ University of Vienna\\
Nordbergstrasse 15\\ 1090 Wien\\ Austria\\ and International
Erwin Schr\"odinger
Institute for Mathematical Physics\\ Boltzmanngasse 9\\ 1090 Wien\\ Austria}
\email{\mailto{Gerald.Teschl@univie.ac.at}}
\urladdr{\url{http://www.mat.univie.ac.at/~gerald/}}

\thanks{{\it Research supported by the Austrian Science Fund (FWF) under Grant No.\ Y330}}
\thanks{J.\ Lond.\ Math.\ Soc.\ (2) {\bf 88}, 801--828 (2013).}

\keywords{Sturm--Liouville operators, distributional coefficients, inverse spectral theory.}
\subjclass[2010]{Primary 34B24, 34L05; Secondary  34L40, 46E22.}

\begin{abstract}
We discuss inverse spectral theory for singular differential operators on arbitrary intervals 
$(a,b) \subseteq \mathbb{R}$ associated with rather general differential expressions of the type 
\[
 \tau f = \frac{1}{r} \left( - \big(p[f' + s f]\big)' + s p[f' + s f] + qf\right),  
\]
where the coefficients $p$, $q$, $r$, $s$ are Lebesgue measurable on $(a,b)$ with  $p^{-1}$, $q$, 
$r$, $s \in L^1_{\text{loc}}((a,b); dx)$ and real-valued with $p\not=0$ and $r>0$ a.e.\  
on $(a,b)$. In particular, we explicitly permit certain distributional potential coefficients. 

The inverse spectral theory results derived in this paper include those implied by the spectral measure,  
by two-spectra and three-spectra, as well as local Borg--Marchenko-type inverse spectral results. The 
special cases of Schr\"odinger operators with distributional potentials and Sturm--Liouville operators 
in impedance form are isolated, in particular. 
\end{abstract}

\maketitle

\section{Introduction}\label{s1}

The prime motivation behind this paper is to discuss inverse spectral theory for singular Sturm--Liouville operators on an arbitrary interval $(a,b) \subseteq \bbR$ associated with rather general differential expressions of the type 
\be
 \tau f = \frac{1}{r} \left( - \big(p[f' + s f]\big)' + s p[f' + s f] + qf\right).    \lb{1.1}
\ee
Here the coefficients $p$, $q$, $r$, $s$ are Lebesgue measurable on $(a,b)$ with 
 $p^{-1}$, $q$, $r$, $s \in L^1_{\loc}((a,b); dx)$ and real-valued with $p\not=0$ and $r>0$ ae.\ on $(a,b)$.   
Furthermore, $f$ is supposed to satisfy
\begin{equation} 
f \in AC_{\text{loc}}((a,b)), \; p[f' + s f] \in AC_{\text{loc}}((a,b)), 
\end{equation}
with $AC_{\loc}((a,b))$ denoting the set of all locally absolutely continuous functions on $(a,b)$. For such functions $f$, the expression 
\begin{equation}
f^{[1]} := p[f'+ s f]     \lb{1.2} 
\end{equation} 
will subsequently be called the {\it first quasi-derivative} of $f$.

One notes that in the general case \eqref{1.1}, the differential expression is formally given by
\be
\tau f = \frac{1}{r} \left( - \big(pf'\big)' + \big[- (sp)' + s^2 p + q\big]f \right). 	  \lb{1.3}
\ee
In the special case $s= 0$ this approach reduces to the standard one, that is, one obtains
\be
\tau f = \frac{1}{r} \left( - \big(pf'\big)' + q f \right).
\ee
Moreover, in the case $p=r=1$, our approach is sufficiently general to include distributional potential coefficients from the space 
$W^{-1,1}_{\loc}((a,b))$ as well as all of $H^{-1}_{\loc}((a,b)) = W^{-1,2}_{\loc}((a,b))$ (as the term 
$s^2$ can be absorbed in $q$), and thus even in this special case our setup is slightly more general than the approach pioneered by Savchuk and Shkalikov \cite{SS99}, who defined the differential expression as
\be
\tau f = - \big([f' + s f]\big)' + s [f' + s f] - s^2 f, \quad f,\, [f' + s f] \in \AC_{\loc}((a,b)). 
\ee
One observes that in this case $q$ can be absorbed in $s$ by virtue of the transformation $s \to s+ \int^x q$. Their approach requires the additional condition $s^2 \in L^1_{\loc}((a,b); dx)$.  Moreover, since there are distributions in $H^{-1}_{\loc}((a,b))$ which are not measures, the operators discussed here are not a special case of Sturm--Liouville operators with measure-valued coefficients as discussed, for instance, in \cite{BR05}, \cite{ET12}.

As we pointed out in our extensive introductions in the recent papers \cite{EGNT12} and \cite{EGNT12a} 
on the subject of general Sturm--Liouville operators with distributional coefficients, 
similar differential expressions have previously been studied by Bennewitz and Everitt \cite{BE83} in 
1983 (see also \cite[Section I.2]{EM99}). Moreover, an extremely thorough and systematic investigation, including even and odd higher-order operators defined in terms of appropriate quasi-derivatives, and 
in the general case of matrix-valued coefficients (including distributional potential coefficients in the 
context of Schr\"odinger-type operators) was presented by Weidmann \cite{We87} in 1987.
However, it seems likely to us that the extraordinary generality exerted by Weidmann \cite{We87} in his treatment of higher-order differential operators obscured the fact that he already dealt with distributional potential coefficients back in 1987 and so it was not until 1999 that Savchuk and Shkalikov \cite{SS99} 
started a new development for Sturm--Liouville (resp., Schr\"odinger) operators with distributional 
potential coefficients in connection with areas such as, self-adjointness proofs, spectral and inverse 
spectral theory, oscillation properties, spectral properties in the non-self-adjoint context, etc. 

Next, rather than describing the enormous amount of literature that followed the work by 
Savchuk and Shkalikov \cite{SS99} (such a description has been undertaken in great detail in 
\cite{EGNT12} and \cite{EGNT12a}), and before describing the content of this paper, we now focus 
exclusively on prior work that is directly related to inverse spectral aspects involving distributional 
coefficients. First we note that measures, as positive distributions of order zero, represent of course a 
special case of distributions and we refer, for instance, to Ben Amor and Remling \cite{BR05}. 

Much of the existing literature deals with the case where $p=r=1$, $s$ is given in terms of the 
integral of an element (possibly, complex-valued) in $W^{\alpha,2}((0,1))$, $\alpha \geq -1$, 
and separated Dirichlet and Robin-type boundary conditions are formulated (the latter in terms 
of the first quasi-derivative \eqref{1.2}). For instance, Hryniv and Mykytyuk \cite{HM03} studied 
the inverse spectral problem for Dirichlet boundary conditions in the self-adjoint case for 
$\alpha = -1$, using one spectrum and an appropriate set of norming constants. Their results 
include the reconstruction algorithm and its stability, and a description of isospectral sets. The 
case of the two spectra inverse problem (fixing the boundary condition at one end) is also discussed 
in \cite{HM04aa} and continued in \cite{AHM08}, \cite{Hr11}, \cite{HM06}, 
\cite{HM06a}. In addition, Robin-type boundary conditions and a Hochstadt--Lieberman-type 
inverse spectral result, where the knowledge of the set of norming constants is replaced by knowledge 
of the potential over the interval $(0,1/2)$, and again a reconstruction algorithm is provided in 
\cite{HM04}; transformation operators associated with Robin boundary conditions are treated in 
\cite{HM04a}. The case of the Sobolev scale, that is, the case $\alpha \in [-1,0]$ is developed in 
\cite{HM06} (see also \cite{HM06a}), and analyticity and uniform stability of these inverse spectral 
problems in this Sobolev scale are established in Hryniv \cite{Hr11}. For the case of self-adjoint 
matrix-valued distributional potentials we refer to Mykytyuk and Trush \cite{MT10}; the case of 
reconstruction by three spectra is treated in Hryniv and Mykytyuk \cite{HM03a}. A discontinuous 
inverse eigenvalue problem, permitting a jump in the eigenfunctions at an interior point of the 
underlying interval, extending a result of Hochstadt--Lieberman, was proved by Hald \cite{Ha84}. 

In the complex-valued case, Savchuk and Shkalikov studied the inverse spectral 
problem, in particular, the reconstruction problem from two spectra in \cite{SS05}. Moreover, in 
\cite{SS06}--\cite{SS10} they discussed the case of Dirichlet boundary conditions 
and also the case with 
Dirichlet boundary conditions at one end and Neumann boundary conditions at the other in connection 
with the inverse spectral problem in terms of one spectrum and (appropriate analogs) of norming 
constants and obtain interesting mapping properties associated with (regularized) spectral data 
that lead to a solution of the problem of uniform stability of recovering the potential. 
In this context we also mention the paper by Albeverio, Hryniv, and Mykytyuk \cite{AHM08} 
which focuses on discrete spectra of non-self-adjoint Schr\"odinger  operators with (complex-valued) distributional potentials $W^{-1,2} ((0,1))$ (i.e., in the special case where $p=r=1$, 
$s\in L^2((0,1);dx)$) and either Dirichlet boundary conditions at both endpoints, or Dirichlet boundary conditions at one endpoint and Robin-type boundary conditions at the other. In particular, the 
reconstruction of the potential from two spectra, or one spectrum and appropriate (analogs of) 
norming constants are provided. 

In contrast to these inverse spectral problems associated with discrete spectrum situations 
on a bounded interval, inverse scattering problems for Schr\"odinger operators with $p=r=1$ 
and distributional potentials $q$ represented in terms of of the Miura transformation 
$q = s' + s^2$ for $s \in L^1(\bbR; dx) \cap L^2(\bbR; dx)$ are treated in 
Frayer, Hryniv, Mykytyuk, and Perry \cite{FHMP09}. These studies are continued and extended 
in Hryniv, Mykytyuk, and Perry \cite{HMP11, HMP11a}; in this context we also refer to the basic 
paper by Kappeler, Perry, Shubin, and Topalov \cite{KPST05} on the Miura transform on the real 
line. 

Next, we turn to a description of the principal content of this paper. In Section \ref{s2} we briefly 
discuss the relevant basics of Sturm--Liouville operators associated with the differential expression 
\eqref{1.1}, as far as they are needed in this 
paper (precise details with proofs can be found in \cite{EGNT12}, \cite{EGNT12a}). Inverse spectral 
results for these kind of operators based on the spectral measure are derived in Section \ref{s3} 
employing the theory of 
de Branges spaces. The first result of this type (assuming $r=1$, $s=0$ and a regular left endpoint) 
appears to be due to Bennewitz \cite{Be03}. He proved that, in general, the spectral measure determines 
the operator only up to a so-called Liouville transform by employing certain Paley--Wiener-type results.
Since the Paley--Wiener spaces are a particular kind of de Branges spaces, his approach is quite close to ours. In fact, instead of the explicit Paley--Wiener theorems in \cite{Be03}, we here use the more abstract 
framework of de Branges' theory which also allows us to handle quite singular left endpoints. 
In addition, let us mention that similar problems for left-definite 
Sturm--Liouville operators have been considered quite recently in \cite{Be04}--\cite{BBW12}, 
\cite{E12a} due to their relevance in connection with solving the Camassa--Holm 
equation. However, our principal result of Section \ref{s3}, Theorem \ref{thmdBuniqS}, also yields 
relevant results in the special case of Schr\"odinger operators with distributional potentials 
(assuming $p=r=1$) and in the context of Sturm--Liouville operators in impedance form (assuming 
$q=s=0$ and $p=r$). Section \ref{s4} is then devoted to the inverse spectral problem associated 
with two discrete spectra, a classical situation especially, in the case of Schr\"odinger operators. In our principal result of this section, Theorem \ref{thmTS}, we extend the two-spectrum result now to the 
general situation \eqref{1.1}, and hence to the case of distributional coefficients. Moreover, we are 
also able to permit the left endpoint to be in the limit circle (l.c.) case instead of the usual assumption of 
being regular. Again, the cases of Schr\"odinger and 
impedance-type Sturm--Liouville operators are isolated as important special cases. The corresponding 
case of three spectra, associated with the intervals $(a,b)$, $(a,c)$, and $(c,b)$ for some fixed 
$c \in (a,b)$ is then treated in Section \ref{s5}, again with particular emphasis on the special cases 
of Schr\"odinger and impedance-type Sturm--Liouville operators. Our final Section \ref{s6} then covers 
the case of local Borg--Marchenko as well as Hochstadt--Lieberman results for Schr\"odinger operators 
with distributional potential coefficients.
In particular, we generalize the main results of \cite{ET12a}, \cite{KST12} to the distributional situation. 
Appendix \ref{sA} derives some high-energy asymptotic 
relations for regular Weyl--Titchmarsh functions under our general set of assumptions but assuming 
that $\tau$ is regular at the left endpoint $a$; Appendix \ref{sB}  derives a more refined high-energy asymptotics of regular Weyl--Titchmarsh functions in the special case $p=r=1$, again 
assuming $\tau$ to be regular at $a$.   

Finally, we briefly mention some of the notation used in this paper: The Hilbert spaces are typically of 
the form $\Lr$ with scalar product denoted by $\spr{\,\cdot\,}{\cdot\,}_{r}$ (linear in the first factor). 
In addition, we denote by $L^2_0((a,c);r(x)dx)$ the closed linear subspace of $\Lr$ consisting of all 
functions which vanish (Lebesgue) a.e.\ outside of $(a,c)$. 
Given a linear operator $T$ mapping (a subspace of) a Hilbert space into another, $\dom{T}$, 
$\sigma(T)$, $\rho(T)$ denote the domain, spectrum, and resolvent set of $T$.

\section{Sturm--Liouville Operators with Distributional Coefficients} \lb{s2}

In the present section we recall the basics of Sturm--Liouville operators with distributional potential coefficients as far as it is needed for the present paper. Throughout this section we make the following assumptions:

\begin{hypothesis} \lb{h2.1}
Suppose $(a,b) \subseteq \bbR$ and assume that 
$p$, $q$, $r$, $s$ are Lebesgue measurable on $(a,b)$ with $p^{-1}$, $q$, $r$, $s\in L^1_{\loc}((a,b);dx)$ and real-valued with $p\not=0$ and $r>0$ $($Lebesgue\,$)$ a.e.\ on $(a,b)$.
\end{hypothesis}

The differential expressions $\tau$ considered in this paper are of the form
\be
 \tau f = \frac{1}{r} \left( - \big(p [f' + s f]\big)' + s p [f' + s f] + qf\right),\quad f\in \Deftau.  \lb{2.2}
\ee
Here, $\Deftau$ is the set of all functions $f$ for which $\tau f$ belongs to $\Llocr$,   
\begin{equation}
\Deftau=\{g\in AC_{\text{loc}}((a,b))\, |\, p [g' + s g] \in AC_{\text{loc}}((a,b))\}.
\end{equation}
 Instead of the usual derivative, we will use the expression 
\begin{equation}
f^{[1]}=p [f' + s f], \quad f \in \Deftau,    \lb{2.3}
\end{equation}
which will be referred to as the {\it first quasi-derivative} of $f$.

Given some $g\in\Llocr$, the equation $(\tau-z) f=g$ for some $z\in\C$ is equivalent to the system of ordinary differential equations
\be\label{eqn:system}
 \begin{pmatrix} f \\ f^\qd \end{pmatrix}' 
  = \begin{pmatrix}  - s & p^{-1} \\ q - zr & s  \end{pmatrix}  \begin{pmatrix} f \\ f^\qd  \end{pmatrix} - \begin{pmatrix} 0 \\ rg \end{pmatrix}
\ee
on $(a,b)$. 
As a consequence, for each $c\in(a,b)$ and $d_1$, $d_2\in\C$ there is a unique solution  $f\in\Deftau$ of  $(\tau - z) f = g$ with $f(c)=d_1$ and $f^\qd(c)=d_2$. Moreover, if $g$, $d_1$, $d_2$, and $z$ are real-valued, then so is the solution $f$.

For all functions $f$, $g\in\Deftau$ the modified Wronski determinant
\be
 W(f,g)(x) = f(x)g^\qd(x) - f^\qd(x)g(x), \quad x\in (a,b),
\ee
is locally absolutely continuous. In fact, one has the Lagrange identity 
\be\label{eqn:lagrange}
 \int_\alpha^\beta \left[g(x) (\tau f)(x)  - f(x) (\tau g)(x)\right] \, r(x) dx = W(f,g)(\beta) - W(f,g)(\alpha)
\ee
for each  $\alpha$, $\beta\in(a,b)$. 
 As a consequence, the Wronskian $W(u_1,u_2)$ of two solutions $u_1$, $u_2\in\Deftau$ of $(\tau-z)u=0$ is constant with $W(u_1,u_2)  \not=0$ if and only if $u_1$,  $u_2$ are linearly independent.

 Associated with the differential expression $\tau$ is a maximally defined linear operator $\Tmax$ in the Hilbert space $\Lr$ with scalar product
\be
 \spr{f}{g}_{r} = \int_{a}^b  f(x) \ol{g(x)} \, r(x) dx, \quad f,\, g\in\Lr,
\ee
which is given by 
\begin{align} 
& \Tmax f = \tau f,   \\
& f \in \dom{\Tmax} = \left\lbrace g\in\Lr \,\big|\, g\in\Deftau, \, \tau g\in\Lr \right\rbrace.  \no 
\end{align}
 It turns out that this operator always has self-adjoint restrictions and their description depends on whether 
 the limit circle (l.c.) or limit point (l.p.) cases prevail at the endpoints, which are described as follows.  
 The differential expression $\tau$ is said to be in the l.c.\ case at an endpoint if for one (and hence 
 for all) $z\in\C$, all solutions of $(\tau-z)u=0$ lie in $L^2((a,b);r(x)dx)$ near this endpoint. Otherwise, 
 $\tau$ is said to be in the l.p.\ case at this endpoint. 
 Moreover, $\tau$ is called {\it regular} at an endpoint, if $p^{-1}$, $q$, $r$, $s$ are integrable near this endpoint. In this case, for each solution $u$ of $(\tau-z)u=0$, the functions $u(x)$ and $u^\qd(x)$ 
 have finite limits as $x$ tends to this endpoint. This guarantees that $\tau$ is always in the l.c.\ case 
 at regular endpoints. 

 In this paper we are only interested in self-adjoint restrictions with separated boundary conditions. 
 All of them are given by 
\begin{align}
\begin{split} 
 & S f = \tau f,   \\ 
 & f \in \dom{S} = \left\lbrace g\in\dom{\Tmax} \,|\, W(g,\ol{v})(a) = W(g,\ol{w})(b) = 0 \right\rbrace,  
 \end{split} 
\end{align}
for some suitable $v$, $w\in\dom{\Tmax}$. 
Hereby, the boundary condition at an endpoint is actually only necessary if $\tau$ is in the l.c.\ case at this endpoint (otherwise, i.e., in the l.p.\ case, it is void).  
In the l.c.\ case one needs to require that $W(v,\ol{v})(a) = 0$ and $W(h,\ol{v})(a)\not=0$, respectively $W(w,\ol{w})(b) = 0$ and $W(h,\ol{w})(b)\not=0$ for 
some $h\in\dom{\Tmax}$. We also note that all self-adjoint restrictions of $\Tmax$ have separated boundary conditions unless the l.c.\ case prevails at both endpoints. 

 Next, let $S$ be some self-adjoint restriction of $\Tmax$ with separated boundary conditions. 
 In order to define a singular Weyl--Titchmarsh--Kodaira function (as introduced recently in \cite{ET12}, 
 \cite{GZ06}, and \cite{KST12}), one needs a real entire fundamental system $\theta_z$, $\phi_z$ 
 of $(\tau-z)u=0$ with $W(\theta_z,\phi_z)=1$, such that $\phi_z$ lies in $\dom{S}$ near $a$, 
 that is, $\phi_z$ lies in $\Lr$ near $a$ and satisfies the boundary condition at $a$ if $\tau$ is in the 
 l.c.\ case at $a$.
 Here, by {\it real entire} it is meant that the functions 
 \begin{align}
   z\mapsto\theta_z(c), \quad z\mapsto\theta_z^\qd(c), \quad z\mapsto \phi_z(c), 
   \quad z\mapsto \phi_z^\qd(c), 
 \end{align}
are real entire for some (and hence for all) $c\in(a,b)$. For such a real entire fundamental system to 
exist, it is necessary and sufficient that there is no essential spectrum coming from the singularity at 
$a$; see \cite[Theorem\ 8.6]{EGNT12a}, \cite{GZ06}, \cite{KST12}. In order to make this precise, we 
fix some $c\in(a,b)$ and let $S_{(a,c)}$ be some self-adjoint operator in $L^2((a,c);r(x)dx)$ associated 
to $\tau$ restricted to the interval $(a,c)$. 

\begin{theorem}\label{thmREFS}
 The following items are equivalent:
 \begin{enumerate}[label=\,\emph{(}\roman*\emph{)}, leftmargin=*, widest=iii, align=left]
  \item There exists a real entire fundamental system $\theta_z$, $\phi_z$ of $(\tau-z)u=0$ with $W(\theta_z,\phi_z)=1$, such that $\phi_z$ lies in $\dom{S}$ near $a$.
  \item There is a non-trivial real entire solution $\phi_z$ of $(\tau-z)u=0$ which lies in $\dom{S}$ near $a$.
  \item The spectrum of $S_{(a,c)}$ is purely discrete for some $c\in(a,b)$. 
 \end{enumerate}
\end{theorem}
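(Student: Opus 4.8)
The plan is to prove (i) $\Rightarrow$ (ii) $\Rightarrow$ (iii) $\Rightarrow$ (ii), and then (ii) $\Rightarrow$ (i). The implication (i) $\Rightarrow$ (ii) is immediate, since the $\phi_z$ furnished by (i) is itself a non-trivial real entire solution lying in $\dom{S}$ near $a$. Throughout, fix a reference point $c\in(a,b)$ and let $u_z$, $v_z$ be the fundamental system of $(\tau-z)u=0$ with $u_z(c)=1$, $u_z^\qd(c)=0$, $v_z(c)=0$, $v_z^\qd(c)=1$; since the initial data are $z$-independent, $u_z$ and $v_z$ are real entire in $z$ and $W(u_z,v_z)=1$. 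Recall also that, for each $z\in\C$, the solutions of $(\tau-z)u=0$ lying in $\dom{S}$ near $a$ --- that is, lying in $\Lr$ near $a$ and, if $\tau$ is in the l.c.\ case at $a$, satisfying the boundary condition there --- form a one-dimensional subspace.

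\emph{(ii) $\Rightarrow$ (iii).} Let $\phi_z$ be as in (ii), fix $c\in(a,b)$ and a self-adjoint realization $S_{(a,c)}$ of $\tau$ on $(a,c)$ with a separated boundary condition at $c$, and let $\chi_z$ be the (real entire) solution of $(\tau-z)u=0$ meeting that boundary condition. Then $w(z):=W(\phi_z,\chi_z)=\phi_z(c)\chi_z^\qd(c)-\phi_z^\qd(c)\chi_z(c)$ is entire in $z$, and $w\not\equiv 0$, since otherwise $\chi_z$ would lie in $\dom{S}$ near $a$ for every $z$, so that every $z\in\C$ would be an eigenvalue of $S_{(a,c)}$. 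For every $z$ with $w(z)\ne 0$ the solutions $\phi_z$, $\chi_z$ are linearly independent, and since $\phi_z\in\Lr$ near $a$ while $\chi_z\in\Lr$ near the regular endpoint $c$, the Green's function $G_z(x,y)=w(z)^{-1}\phi_z(x)\chi_z(y)$ for $x\le y$ (and symmetrically for $x\ge y$) furnishes a bounded inverse of $S_{(a,c)}-z$ (cf.\ \cite{EGNT12a}). Hence $\sigma(S_{(a,c)})\subseteq\{z\in\C:w(z)=0\}$, a set without finite accumulation point; as each eigenvalue of $S_{(a,c)}$ has multiplicity one, the spectrum of $S_{(a,c)}$ is purely discrete. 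Since this works for every $c$ and every separated boundary condition at $c$, and the essential spectrum is stable under finite-rank perturbations of the resolvent, (iii) in fact holds for all $c\in(a,b)$.

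\emph{(iii) $\Rightarrow$ (ii).} Write the singular Weyl--Titchmarsh solution at $a$, normalized at $c$, as $\Phi_z=u_z+M(z)v_z$, where $M$ is the corresponding Weyl--Titchmarsh function; $M$ is analytic on $\C\setminus\R$, and its poles are the eigenvalues of a self-adjoint realization of $\tau$ on $(a,c)$ with the same essential spectrum as $S_{(a,c)}$. Thus the hypothesis that $S_{(a,c)}$ have purely discrete spectrum forces the spectral measure of $M$ to be a pure point measure supported on a discrete set, so that $M$ extends to a real meromorphic function on $\C$ whose poles $P$ form a discrete set and are all simple. Choosing a real entire function $d$ with simple zeros exactly at the points of $P$ (a canonical product), the function $\phi_z:=d(z)\Phi_z=d(z)u_z+(dM)(z)v_z$ is real entire in $z$ (the simple poles of $M$ being cancelled by the simple zeros of $d$), is a non-trivial solution of $(\tau-z)u=0$ for every $z$, and lies in $\dom{S}$ near $a$: for $z\notin P$ it is a scalar multiple of the Weyl solution $\Phi_z$, and for $z\in P$ it equals a nonzero multiple of $v_z$, which at such $z$ is itself the Weyl solution (the pole of $M$ there meaning precisely that the Weyl solution has vanishing $u_z$-component), hence lies in $\dom{S}$ near $a$.

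\emph{(ii) $\Rightarrow$ (i).} Given $\phi_z$ as in (ii), write $\phi_z=p_1(z)u_z+p_2(z)v_z$ with $p_1(z)=\phi_z(c)$ and $p_2(z)=\phi_z^\qd(c)$ real entire. These have no common zero, and by the implication (ii) $\Rightarrow$ (iii) just established their zero sets are discrete with only simple zeros (up to a nonvanishing entire factor they are the characteristic functions of self-adjoint realizations of $\tau$ on $(a,c)$). A Mittag--Leffler partial-fraction expansion of $1/(p_1 p_2)$, multiplied through by $p_1$ and by $p_2$ and using that the partial fractions attached to the zeros of $p_j$ become entire after multiplication by $p_j$, yields entire functions $q_1$, $q_2$ with $q_1 p_2-q_2 p_1=1$; replacing $q_j(z)$ by $\tfrac{1}{2}\big(q_j(z)+\overline{q_j(\bar z)}\big)$ makes them real entire and preserves this identity. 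Then $\theta_z:=q_1(z)u_z+q_2(z)v_z$ is a real entire solution with $W(\theta_z,\phi_z)=q_1(z)p_2(z)-q_2(z)p_1(z)=1$, so $\{\theta_z,\phi_z\}$ is the desired fundamental system. (Alternatively one may invoke that the ring of entire functions is a B\'ezout domain.) The substantive point of the proof is the construction in (iii) $\Rightarrow$ (ii) --- reading purely discrete spectrum as meromorphy of the singular Weyl--Titchmarsh function and then clearing its poles by a canonical product, while verifying that the resulting solution still lies in $\dom{S}$ near $a$ at the exceptional energies $z\in P$; the boundedness of the Green's operator attached to the singular endpoint in (ii) $\Rightarrow$ (iii) and the partial-fraction bookkeeping in (ii) $\Rightarrow$ (i) are routine given the Weyl theory of \cite{EGNT12a} (see also \cite{GZ06}, \cite{KST12}).
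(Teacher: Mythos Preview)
The paper does not supply its own proof here; the theorem is quoted with a reference to \cite[Theorem~8.6]{EGNT12a} (together with \cite{GZ06}, \cite{KST12}). Your argument follows precisely the strategy of those references---(ii)$\Rightarrow$(iii) by building the Green's function from $\phi_z$ and a solution meeting the boundary condition at $c$, (iii)$\Rightarrow$(ii) by reading purely discrete spectrum as global meromorphy of the Herglotz half-line $m$-function $M$ at the regular interior point $c$ and clearing its simple real poles by a real canonical product, and (ii)$\Rightarrow$(i) via the B\'ezout identity for the entire data $p_1(z)=\phi_z(c)$, $p_2(z)=\phi_z^{\qd}(c)$---and is correct.

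Two places would benefit from one more sentence each. In (ii)$\Rightarrow$(iii) you are tacitly taking $S_{(a,c)}$ with the \emph{same} boundary condition at $a$ as $S$ (so that $\phi_z$ really lies in its domain near $a$ and the Green's function you write down inverts $S_{(a,c)}-z$); your subsequent finite-rank remark then transfers discreteness to the $S_{(a,c)}$ fixed in the statement. In (ii)$\Rightarrow$(i) your Mittag--Leffler splitting needs $p_1$ and $p_2$ to have only simple zeros; this is true---any admissible $\phi_z$ differs from the one constructed in your (iii)$\Rightarrow$(ii) step by a nowhere-vanishing entire factor, since both span the same one-dimensional solution subspace at each $z$ and neither is ever the zero solution---but is not the justification you gave (an entire function can vanish to higher order at a simple eigenvalue). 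The B\'ezout/Helmer alternative you mention avoids this point altogether and is the cleaner route.
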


Given a real entire fundamental system as in Theorem \ref{thmREFS}, one can define a function \,$m:\rho(S)\rightarrow\C$\, by requiring that the solutions
\be\label{defpsi}
\psi_z = \theta_z + m(z)\phi_z, \quad z\in\rho(S),
\ee
lie in $\dom{S}$ near $b$, that is, they lie in $\Lr$ near $b$ and satisfy the boundary condition at 
$b$, if $\tau$ is 
 in the l.c.\ case at $b$.
 This function $m$, called the singular Weyl--Titchmarsh--Kodaira function of $S$, is analytic on 
 $\rho(S)$ and satisfies 
 \begin{align}\label{eqprop::mpsi} 
  m(z)= \ol{m(\ol{z})}, \quad z\in\rho(S).
 \end{align}
 By virtue of the Stieltjes--Livsic inversion formula, it is possible to associate a measure with $m$. In 
 fact, there is a unique Borel measure $\mu$ on $\R$ given by
\be\label{defrho}
 \mu((\lambda_1,\lambda_2]) = \lim_{\delta\downarrow 0}\,\lim_{\varepsilon\downarrow 0} \frac{1}{\pi} 
                      \int_{\lambda_1+\delta}^{\lambda_2+\delta} \im(m(\lambda+i\varepsilon))\, d\lambda,
\ee
 for each $\lambda_1$, $\lambda_2\in\R$ with $\lambda_1<\lambda_2$. The transformation 
 \be
  \hat{f}(z) = \int_a^b \phi_z(x) f(x) \, r(x) dx, \quad z\in\C, \lb{eqnhat}
 \ee
 initially defined for functions $f\in\Lr$ which vanish near $b$, uniquely extends to a unitary operator $\mathcal{F}$ from $\Lr$ onto $\Lrmu$, which maps $S$ to multiplication  with the independent variable in $\Lrmu$. 
 Since the measure $\mu$ is uniquely determined by this property, it is referred to as the spectral measure of $S$ associated with the real entire solution $\phi_z$. 
 
\begin{remark}\label{remSFS}
 It is worth noting that a real entire fundamental systems of $(\tau-z)u=0$ as in Theorem \ref{thmREFS} is not unique and any other such system is given by   
 \begin{align}
  \tilde{\theta}_z = \E^{-g(z)} \theta_z - f(z)\phi_z, \quad \tilde{\phi}_z 
  = \E^{g(z)} \phi_z, \quad z\in\C,     \lb{2.15} 
 \end{align}
 for some real entire functions $f$, $\E^g$. The corresponding singular Weyl--Titchmarsh--Kodaira functions are related via
 \begin{align}
  \widetilde{m}(z) = \E^{-2g(z)} m(z) + \E^{-g(z)}f(z), \quad z\in\rho(S).
 \end{align}
 In particular, the maximal domain of holomorphy and the structure of poles and singularities do not change.
 Moreover, the corresponding spectral measures are related by
\begin{align}
 d\tilde{\mu} = \E^{-2g} d\mu.
\end{align}
 Hence, they are mutually absolutely continuous and the associated spectral transformations only differ by a simple rescaling with the positive function $\E^{-2g}$. One also notes that the spectral measure does not depend on the function $f$ in \eqref{2.15}.
\end{remark}

\section{Inverse Uniqueness Results in Terms of the Spectral Measure}\label{s3} 

In this section we will use the theory of de Branges spaces in order to show to which extent the spectral measure determines the coefficients (and boundary conditions) of the Sturm--Liouville operator in question. 
 For an exposition of this theory we refer to de Branges' book \cite{dB68} or to \cite{Dy70}, 
 \cite{DMcK76}, \cite{Re02}, in which particular emphasis is placed on applications to Sturm--Liouville operators.
 As in the previous section, let $S$ be some self-adjoint realization  of $\tau$, which satisfies Hypothesis \ref{h2.1}, with separated boundary conditions, such that there is a real entire solution $\phi_z$ of $(\tau-z)u=0$ which lies in $\dom{S}$ near $a$. 
 The spectral measure corresponding to this solution will be denoted by $\mu$. 
 In order to introduce the de Branges spaces associated with $S$ and the real entire solution $\phi_z$, we fix some point $c\in(a,b)$ and consider the entire function
\begin{align}\label{eqndBschrE}
 E(z,c) = \phi_z(c) + i \phi_z^\qd(c), \quad z\in\C.
\end{align}
Using the Lagrange identity and the fact that the Wronskian of two solutions satisfying the same boundary condition at $a$ (if any) vanishes at $a$, one gets
\begin{align}\label{eqnELag} 
 \frac{E(z,c) \overline{E(\zeta,c)} - E(\overline{\zeta},c) \overline{E(\overline{z},c)}}{2i (\overline{\zeta} -z)} = \int_a^c \phi_z(x) \overline{\phi_\zeta(x)} \, r(x) dx, \quad \zeta,\,z\in\C_+,
\end{align}  
 where $\C_+$ denotes the open upper complex half-plane.
 In particular, taking $\zeta=z$, this shows that $E(\,\cdot\,,c)$ is a de Branges function, that is, 
 $|E(z,c)|>|E(\overline{z},c)|$ for every $z\in\C_+$.
Moreover, one notes that $E(\,\cdot\,,c)$ does not have any real zero $\lambda$, since otherwise 
$\phi_\lambda$, as well as its quasi-derivative, would vanish at $c$.
With $\mathfrak{B}(c)$ we denote the de Branges space (see, e.g., \cite[Section\ 19]{dB68}, 
\cite[Section\ 2]{Re02}) associated with the de Branges function $E(\,\cdot\,,c)$. 
It consists of all entire functions $F$ such that
\begin{align}
 \int_\R \frac{|F(\lambda)|^2}{|E(\lambda,c)|^2} \, d\lambda < \infty
\end{align}
and such that $F/E(\,\cdot\,,c)$ and $F^\#/E(\,\cdot\,,c)$ are of bounded type in $\C_+$ (that is, they can be written as the quotient of two bounded analytic functions) with non-positive mean type. 
Here $F^\#$ denotes the entire function  
\begin{align}
 F^\#(z) = \overline{F(\overline{z})}, \quad z\in\C, 
\end{align}
 and the mean type of a function $N$ which is of bounded type in $\C_+$ is the number
 \begin{align}
  \limsup_{y\rightarrow\infty} \frac{\ln|N(i y)|}{y} \in[-\infty,\infty).
\end{align}
 Endowed with the inner product
\begin{align}
 \dbspr{F}{G}_{\mathfrak{B}(c)} = \frac{1}{\pi} \int_\R \frac{F(\lambda) 
 \overline{G(\lambda)}}{|E(\lambda,c)|^2} \, d\lambda, 
  \quad F,\, G\in \mathfrak{B}(c),
\end{align}
 the space $\mathfrak{B}(c)$ turns into a reproducing kernel Hilbert space with
 \begin{align}
  F(\zeta) = \dbspr{F}{K(\zeta,\cdot\,,c)}_{\mathfrak{B}(c)}, \quad F\in \mathfrak{B}(c),
 \end{align}
 for each $\zeta\in\C$. 
 Here, the reproducing kernel $K(\,\cdot\,,\cdot\,,c)$ is given by (see also  \cite[Theorem\ 19]{dB68}, \cite[Section\ 3]{Re02}) 
\begin{align}\label{eqndBschrRepKer}
 K(\zeta,z,c) = \int_a^c \phi_z(x) \overline{\phi_\zeta(x)} \, r(x)dx, \quad \zeta,\,z\in\C,
\end{align} 
 as a similar calculation as the one which led to \eqref{eqnELag} shows.
 In order to state the next result, we recall our convention that $L^2_0((a,c);r(x)dx)$ denotes the closed linear subspace of $\Lr$ consisting of all functions which vanish (Lebesgue) a.e.\ outside of $(a,c)$. 

\begin{theorem}\label{thmdBschrBT}
 For every $c\in(a,b)$ the transformation $f\mapsto\hat{f}$ is unitary from $L^2_0((a,c);r(x)dx)$ 
 onto $\mathfrak{B}(c)$, in particular
 \begin{align}
  \mathfrak{B}(c) = \big\lbrace  \hat{f} \,\big|\, f\in L^2_0((a,c);r(x)dx) \big\rbrace.
 \end{align}
\end{theorem}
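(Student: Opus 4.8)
The plan is to show that the two Hilbert spaces $\{\hat f \mid f \in L^2_0((a,c);r\,dx)\}$ and $\mathfrak{B}(c)$ coincide as sets of entire functions with the same norm, and the cleanest route is to exploit that both are reproducing kernel Hilbert spaces and to match their reproducing kernels. First I would observe that for $f \in L^2_0((a,c);r\,dx)$ the transform $\hat f(z) = \int_a^c \phi_z(x) f(x)\, r(x)\,dx$ is an entire function of $z$ (differentiation under the integral sign, using the entire dependence of $\phi_z$ on $z$ and the fact that $f$ has compact support in $(a,c)$, plus standard bounds on solutions of the system \eqref{eqn:system}). Next, since $\mathcal F$ restricted to $L^2_0((a,c);r\,dx)$ is isometric into $\Lrmu$, the subspace $\cH_c := \{\hat f \mid f \in L^2_0((a,c);r\,dx)\}$ inherits an inner product from the closure of this image, making $f \mapsto \hat f$ unitary from $L^2_0((a,c);r\,dx)$ onto $\cH_c$; point evaluations on $\cH_c$ are bounded (again by the growth estimates on $\phi_z$ together with $\|f\|$), so $\cH_c$ is a RKHS, and its reproducing kernel is precisely $\widehat{(\,\overline{\phi_\zeta}\,\indik_{(a,c)}\,)}(z)$, which by \eqref{eqndBschrRepKer} equals $K(\zeta,z,c) = \int_a^c \phi_z(x)\overline{\phi_\zeta(x)}\, r(x)\,dx$.

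The key identity is then that $K(\,\cdot\,,\cdot\,,c)$ is simultaneously the reproducing kernel of $\mathfrak{B}(c)$: the excerpt records this in \eqref{eqndBschrRepKer}, and the computation leading to it is the same Lagrange-identity manipulation that produced \eqref{eqnELag}, rewriting $K(\zeta,z,c)$ in terms of $E(z,c) = \phi_z(c) + i\phi_z^\qd(c)$ in the Hermite--Biehler form $[E(z,c)\overline{E(\zeta,c)} - E(\overline\zeta,c)\overline{E(\overline z,c)}]/[2i(\overline\zeta - z)]$ that is the de Branges kernel. A reproducing kernel determines its Hilbert space uniquely, so once both $\cH_c$ and $\mathfrak{B}(c)$ are RKHSs of entire functions on $\C$ with the same kernel, they are equal as Hilbert spaces, and the theorem follows.

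The one genuine gap in the kernel-matching argument is that, a priori, I only know $\cH_c$ is an RKHS of entire functions; to invoke uniqueness of the de Branges space I must check that $\cH_c$ actually sits inside $\mathfrak{B}(c)$ as a \emph{subset} (equivalently, that the $\cH_c$-norm agrees with the $\mathfrak{B}(c)$-integral norm and that the bounded-type/mean-type conditions defining $\mathfrak{B}(c)$ hold for elements of $\cH_c$). For the norm equality: the spectral transform $\mathcal F$ identifies $\|f\|_r^2$ with $\int_\R |\hat f(\lambda)|^2\, d\mu(\lambda)$; one converts this into the de Branges integral $\frac1\pi\int_\R |\hat f(\lambda)|^2/|E(\lambda,c)|^2\, d\lambda$ by noting that $|E(\,\cdot\,,c)|^{-2}$ is, up to the factor $\pi$, the density that the de Branges machinery attaches to $\mathfrak{B}(c)$ and that this matches $\mu$ restricted to the range of $\mathcal F|_{L^2_0((a,c);r\,dx)}$ — here one uses that $\phi_z(c), \phi_z^\qd(c)$ encode the boundary behaviour at $c$. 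For the bounded-type and mean-type conditions one uses exponential-type estimates: solutions $\phi_z$ of $(\tau-z)u=0$ grow at most like $e^{|z|^{1/2} \cdot (\text{something})}$ along the relevant path, so $\hat f$ (a compactly supported integral of $\phi_z$) has controlled type, and $\hat f/E(\,\cdot\,,c)$ is bounded type of non-positive mean type. I expect this verification of the defining properties of $\mathfrak{B}(c)$ — particularly the bounded-type condition and the precise identification of the norms via $|E(\lambda,c)|^{-2}$ — to be the main obstacle; the RKHS bookkeeping and the Lagrange-identity computation of the kernel are routine given the groundwork already laid in Section~\ref{s2} and the present section.
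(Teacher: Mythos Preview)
Your kernel-matching strategy is correct and in fact already complete at the sentence ``A reproducing kernel determines its Hilbert space uniquely''. The Moore--Aronszajn theorem is unconditional: a positive-definite kernel on a set admits exactly one Hilbert space of functions on that set for which it is the reproducing kernel. Once you have verified that $\mathcal{H}_c$ is a Hilbert space of functions on $\C$ with bounded point evaluations and reproducing kernel $K(\,\cdot\,,\cdot\,,c)$, and the paper has already identified this same $K$ as the reproducing kernel of $\mathfrak{B}(c)$ in \eqref{eqndBschrRepKer}, you are done; there is no need to check separately that $\mathcal{H}_c \subseteq \mathfrak{B}(c)$. Your final paragraph is therefore unnecessary, and the route you sketch there would in fact run into trouble: exponential-type bounds on $\phi_z$ of the sort you invoke are not available under the general Hypothesis~\ref{h2.1} (the paper only establishes Cartwright-class behaviour later, in Lemma~\ref{lemLC}, and only under the extra assumption that $\tau$ is l.c.\ at $a$), and the proposed identification of $\pi^{-1}|E(\lambda,c)|^{-2}\,d\lambda$ with $d\mu$ is not correct as stated (the former depends on $c$, the latter does not; the correct statement is Corollary~\ref{cordBschrembL2}, which is a \emph{consequence} of the theorem rather than an input to it).

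For comparison, the paper's proof is the concrete version of the same idea: it singles out the functions $f_\lambda = \phi_\lambda\,\indik_{(a,c]}$, computes $\hat f_\lambda = K(\lambda,\cdot\,,c)$, verifies directly that $\langle f_{\lambda_1},f_{\lambda_2}\rangle_r = K(\lambda_1,\lambda_2,c) = [K(\lambda_1,\cdot\,,c),K(\lambda_2,\cdot\,,c)]_{\mathfrak{B}(c)}$, and then extends by density on both sides (the $f_\lambda$ span $L^2_0((a,c);r\,dx)$ since they contain the eigenfunctions of $S_{(a,c)}$; the $K(\lambda,\cdot\,,c)$ span $\mathfrak{B}(c)$ by the reproducing property). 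This is essentially a hands-on proof of Moore--Aronszajn in this instance, with the minor bonus that one never needs to argue that $\hat f$ is entire for a general $f\in L^2_0((a,c);r\,dx)$.
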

\begin{proof}
 For each $\lambda\in\R$, the transform of the function $f_\lambda$ defined by 
 \begin{align}
  f_\lambda(x) = \begin{cases} \phi_\lambda(x), & x\in(a,c], \\
                               0,               & x\in(c,b), \end{cases}
 \end{align}
 is given by
 \begin{align}
  \hat{f}_\lambda(z) = \int_a^c \phi_\lambda(x) \phi_z(x) \, r(x)dx = K(\lambda,z,c), \quad z\in\C.
 \end{align}
 In particular, this guarantees that the transforms of all the functions $f_\lambda$, $\lambda\in\R$, 
 lie in $\mathfrak{B}(c)$. Moreover, one has for every $\lambda_1$, $\lambda_2\in\R$, that 
 \begin{align}
 \begin{split}
  \spr{f_{\lambda_1}}{f_{\lambda_2}}_r & = \int_a^c \phi_{\lambda_1}(x) \phi_{\lambda_2}(x) \, r(x)dx    \\ 
  &   = K(\lambda_1,\lambda_2,c) = \dbspr{K(\lambda_1,\cdot\,,c)}{K(\lambda_2,\cdot\,,c)}_{\mathfrak{B}(c)}. 
 \end{split} 
 \end{align}
 Thus, the transform $f\mapsto\hat{f}$ is an isometry from the linear span $D$ of all functions 
 $f_\lambda$, $\lambda\in\R$, to $\mathfrak{B}(c)$. But this span is clearly dense in $L^2_0((a,c);r(x)dx)$ since it contains all eigenfunctions of some self-adjoint operator $S_{(a,c)}$ (associated with 
 $\tau|_{(a,c)}$ in $L^2((a,c);r(x)dx)$).
 Moreover, the linear span of all transforms $K(\lambda,\cdot\,,c)$, $\lambda\in\R$, is dense in 
 $\mathfrak{B}(c)$. In fact, each $F\in \mathfrak{B}(c)$ with 
 \begin{align}
  0 = \dbspr{F}{K(\lambda,\cdot\,,c)}_{\mathfrak{B}(c)} = F(\lambda), \quad \lambda\in\R,
 \end{align}
 vanishes identically. Hence, the transformation $f\mapsto\hat{f}$ restricted to $D$ uniquely extends to a unitary map $V$ from $L^2_0((a,c);r(x)dx)$ onto $\mathfrak{B}(c)$.
 In order to identify this unitary map with the transformation $f\mapsto\hat{f}$, one notes that for each 
 given $z\in\C$, both $f\mapsto\hat{f}(z)$ and $f\mapsto (Vf)(z)$ are continuous on $L^2_0((a,c);r(x)dx)$.
\end{proof}

As an immediate consequence of Theorem \ref{thmdBschrBT} and the fact that the transformation 
\eqref{eqnhat} extends to a unitary map from $\Lr$ onto $\Lrmu$, one obtains the following corollary.

\begin{corollary}\label{cordBschrembL2}
For every $c\in(a,b)$, the de Branges space $\mathfrak{B}(c)$ is isometrically embedded in $\Lrmu$, that is,
\begin{align}
 \int_\R |F(\lambda)|^2 d\mu(\lambda) = \|F\|^2_{\mathfrak{B}(c)}, \quad F\in \mathfrak{B}(c).
\end{align}
Moreover, the union of all spaces $\mathfrak{B}(c)$, $c\in(a,b)$, is dense in $\Lrmu$, that is, 
\begin{align}\label{eqndBschrdense}
 \overline{\bigcup_{c\in(a,b)} \mathfrak{B}(c)} = \Lrmu.
\end{align}
\end{corollary}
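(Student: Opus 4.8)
The plan is to establish the two assertions of Corollary \ref{cordBschrembL2} separately, both by combining Theorem \ref{thmdBschrBT} with the unitarity of the full spectral transformation $\mathcal{F}\colon\Lr\to\Lrmu$ from \eqref{eqnhat}. For the isometric embedding, fix $c\in(a,b)$ and take $F\in\mathfrak{B}(c)$. By Theorem \ref{thmdBschrBT} there is a unique $f\in L^2_0((a,c);r(x)dx)$ with $\hat f=F$ and $\|f\|_r=\|F\|_{\mathfrak{B}(c)}$. Since $L^2_0((a,c);r(x)dx)\subseteq\Lr$ and $\mathcal F$ agrees with $f\mapsto\hat f$ on this subspace (both being the continuous extension of the integral transform \eqref{eqnhat} off functions vanishing near $b$), one has $\mathcal F f=F$ as an element of $\Lrmu$, so $\int_\R|F(\lambda)|^2\,d\mu(\lambda)=\|\mathcal F f\|^2_{\Lrmu}=\|f\|_r^2=\|F\|^2_{\mathfrak{B}(c)}$. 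This is the short half.

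For the density statement \eqref{eqndBschrdense}, the idea is to push $\bigcup_c \mathfrak{B}(c)$ back through the unitary $\mathcal F$ and show its preimage is dense in $\Lr$. By Theorem \ref{thmdBschrBT}, $\mathcal F^{-1}(\mathfrak{B}(c))=L^2_0((a,c);r(x)dx)$ for each $c$, so
\begin{align}
 \mathcal F^{-1}\Big(\bigcup_{c\in(a,b)}\mathfrak{B}(c)\Big)=\bigcup_{c\in(a,b)}L^2_0((a,c);r(x)dx).
\end{align}
Thus it suffices to observe that $\bigcup_{c\in(a,b)}L^2_0((a,c);r(x)dx)$ is dense in $\Lr$: any $f\in\Lr$ is approximated in norm by its truncations $f\cdot\indik_{(a,c)}$ as $c\uparrow b$, by dominated convergence, and each truncation lies in $L^2_0((a,c);r(x)dx)$. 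Since $\mathcal F$ is unitary it maps dense sets to dense sets and commutes with closures, which gives $\overline{\bigcup_c\mathfrak{B}(c)}=\mathcal F\big(\overline{\bigcup_c L^2_0((a,c);r(x)dx)}\big)=\mathcal F(\Lr)=\Lrmu$.

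There is essentially no hard obstacle here; the only point requiring a little care is the compatibility of the two transformations — that the unitary $V$ produced in the proof of Theorem \ref{thmdBschrBT} on $L^2_0((a,c);r(x)dx)$ is literally the restriction of $\mathcal F$, not merely unitarily equivalent to it. This is settled by the same continuity argument used at the end of that proof: for each fixed $z\in\C$ the evaluation $f\mapsto\hat f(z)$ is continuous on $\Lr$ (indeed on each $L^2_0((a,c);r(x)dx)$), both $\mathcal F$ and $V$ restrict to $f\mapsto\hat f$ on the dense subspace of functions vanishing near $b$ (resp.\ on the span $D$), and hence they agree there; evaluation functionals separate points of $\Lrmu$, so the two extensions coincide. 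Once that identification is in hand, both parts of the corollary are immediate consequences of unitarity, exactly as the phrase ``immediate consequence'' in the statement suggests.
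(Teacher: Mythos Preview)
Your proof is correct and follows exactly the approach the paper intends: it states the corollary as ``an immediate consequence of Theorem \ref{thmdBschrBT} and the fact that the transformation \eqref{eqnhat} extends to a unitary map from $\Lr$ onto $\Lrmu$'' without giving further details, and you have filled those in accurately. Your third paragraph is slightly more cautious than necessary --- since every $f\in L^2_0((a,c);r(x)dx)$ already vanishes near $b$, the integral formula \eqref{eqnhat} applies to it directly, so $\mathcal F f=\hat f$ holds without invoking any continuity extension argument.
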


Theorem \ref{thmdBschrBT} also implies that the de Branges spaces $\mathfrak{B}(c)$, $c\in(a,b)$ form an ascending chain of subspaces of $\Lrmu$ which is continuous in the sense of \eqref{eqndBschrcontinuous} 
below. 

\begin{corollary}\label{cordBschrincl}
If $c_1$, $c_2\in(a,b)$ with $c_1<c_2$, then $\mathfrak{B}(c_1)$ is isometrically embedded in, but not equal to, $\mathfrak{B}(c_2)$.
Moreover,
\begin{align}\label{eqndBschrcontinuous}
 \overline{\bigcup_{x\in(a,c)} \mathfrak{B}(x)} = \mathfrak{B}(c) = \bigcap_{x\in(c,b)}\mathfrak{B}(x), \quad c\in (a,b).
\end{align}
\end{corollary}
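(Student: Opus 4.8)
The plan is to derive everything from Theorem \ref{thmdBschrBT}, which identifies $\mathfrak{B}(c)$ with the image of $L^2_0((a,c);r(x)dx)$ under the (unitary, on that subspace) transform $f\mapsto\hat f$, together with the fact that the full transform $\mathcal F$ is unitary from $\Lr$ onto $\Lrmu$. First I would record the isometric inclusion: if $c_1<c_2$, then $L^2_0((a,c_1);r(x)dx)\subseteq L^2_0((a,c_2);r(x)dx)$ as closed subspaces of $\Lr$, and since $f\mapsto\hat f$ is a restriction of the global isometry $\mathcal F$, it maps the smaller subspace isometrically into the larger; applying Theorem \ref{thmdBschrBT} on both ends gives that $\mathfrak{B}(c_1)$ is isometrically embedded in $\mathfrak{B}(c_2)$. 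The strictness (not equal) follows because $L^2_0((a,c_1);r(x)dx)\subsetneq L^2_0((a,c_2);r(x)dx)$ — the interval $(c_1,c_2)$ has positive $r(x)dx$-measure, so there exist functions in $\Lr$ supported in $(c_1,c_2)$ that are nonzero — and $\mathcal F$ is injective, so the images are distinct; one can also invoke the reproducing kernels $K(\lambda,\cdot\,,c_i)$, which differ since $\int_{c_1}^{c_2}\phi_\lambda(x)\overline{\phi_\lambda(x)}r(x)dx>0$ for a.e.\ $\lambda$.

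Next I would transport the continuity statement \eqref{eqndBschrcontinuous} through the unitary $\mathcal F$. Since $\mathcal F$ is unitary, it takes closures to closures, unions of subspaces to (closures of) unions of their images, and intersections of closed subspaces to intersections of images; thus \eqref{eqndBschrcontinuous} is equivalent to the purely Hilbert-space statement
\begin{align}
 \overline{\bigcup_{x\in(a,c)} L^2_0((a,x);r(x)dx)} = L^2_0((a,c);r(x)dx) = \bigcap_{x\in(c,b)} L^2_0((a,x);r(x)dx)
\end{align}
inside $\Lr$. The right-hand equality is immediate: a function vanishing a.e.\ outside $(a,x)$ for every $x\in(c,b)$ vanishes a.e.\ outside $\bigcap_{x\in(c,b)}(a,x)=(a,c]$, hence lies in $L^2_0((a,c);r(x)dx)$, and the reverse inclusion is trivial. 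For the left-hand equality, $\bigcup_{x\in(a,c)} L^2_0((a,x);r(x)dx)$ consists of all $L^2$ functions vanishing a.e.\ outside some $(a,x)$ with $x<c$; given any $f\in L^2_0((a,c);r(x)dx)$, the truncations $f\,\indik_{(a,c-1/n)}$ lie in this union and converge to $f$ in $\Lr$ by dominated convergence, giving density.

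The one point needing a little care — and the only genuine obstacle — is that Theorem \ref{thmdBschrBT} is stated as an identification of $\mathfrak{B}(c)$ with the image of $L^2_0((a,c);r(x)dx)$ as a \emph{set}, while here I also need the inclusions $\mathfrak{B}(c_1)\hookrightarrow\mathfrak{B}(c_2)$ to be isometric for the de Branges inner products, not merely bounded. This is exactly where Corollary \ref{cordBschrembL2} is used: both $\|\cdot\|_{\mathfrak B(c_1)}$ and $\|\cdot\|_{\mathfrak B(c_2)}$ agree with the $\Lrmu$-norm on their respective spaces, so on the common subspace $\mathfrak B(c_1)$ the two de Branges norms coincide, which is the asserted isometric embedding. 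With that observation in hand, transporting the chain structure and the continuity identities through $\mathcal F$ is routine, and the strict inclusion is as noted above. I expect no serious difficulty beyond being careful that "isometrically embedded" is interpreted with respect to the de Branges norms and then reduced to the $\Lrmu$-norm via Corollary \ref{cordBschrembL2}.
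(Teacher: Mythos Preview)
Your proposal is correct and follows essentially the same route as the paper: deduce the isometric inclusion from Theorem \ref{thmdBschrBT} together with Corollary \ref{cordBschrembL2}, obtain strictness from the strict inclusion $L^2_0((a,c_1);r(x)dx)\subsetneq L^2_0((a,c_2);r(x)dx)$, and transport the continuity identity \eqref{eqndBschrcontinuous} through the unitary $\mathcal F$ to the corresponding statement for the $L^2_0$ spaces. Your write-up is in fact more detailed than the paper's, which simply asserts the $L^2_0$ analogue without spelling out the truncation/dominated convergence argument.
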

\begin{proof}
 The embedding is immediate from Theorem \ref{thmdBschrBT} and Corollary \ref{cordBschrembL2}. 
  Moreover, from Theorem \ref{thmdBschrBT} one infers that $\mathfrak{B}(c_2)\,\ominus\, \mathfrak{B}(c_1)$ is unitarily equivalent to the space $L^2((c_1,c_2);r(x)dx)$, hence $\mathfrak{B}(c_1)$ is not equal to $\mathfrak{B}(c_2)$.
  The remaining claim follows from the analogous fact that
  \begin{align}
   \overline{\bigcup_{x\in(a,c)} L^2_0((a,x);r(x)dx)} = L^2_0((a,c);r(x)dx) 
   = \bigcap_{x\in(c,b)} L^2_0((a,x);r(x)dx).
  \end{align}
\end{proof}

Next, we proceed to the main result of this section; an inverse uniqueness result. For this purpose, let 
$\tau_j$, $j=1,2$, be two Sturm--Liouville differential expressions on the interval $(a_j,b_j)$, $j=1,2$, of the form \eqref{2.2}, both satisfying the assumptions made in Hypothesis \ref{h2.1}. With $S_j$, $j=1,2$, we denote two corresponding self-adjoint realizations with separated boundary conditions, and we suppose 
that there are nontrivial real entire solutions which lie in the domain of the respective operator near the left endpoint $a_j$, $j=1,2$. All quantities corresponding to $S_1$ and $S_2$ are denoted in an obvious way 
with an additional subscript. In order to state the following result, one recalls that an analytic function is said to be of {\it bounded type} if it can be written as the quotient of two bounded analytic functions. 

\begin{theorem}\label{thmdBuniqS}
  Suppose that the function 
\begin{align}\label{eqnquotE1E2}
   \frac{E_1(z,x_1)}{E_2(z,x_2)}, \quad z\in\C_+,
\end{align}
is of bounded type for some $x_1\in (a_1,b_1)$ and $x_2\in(a_2,b_2)$. If $\mu_1= \mu_2$, then there is a locally absolutely continuous bijection $\eta$ from $(a_1,b_1)$ onto $(a_2,b_2)$, and locally absolutely continuous, real-valued functions $\kappa$, $\nu$ on $(a_1,b_1)$ such that 
 \begin{align}
 \begin{split} 
   \eta' r_2\circ\eta & = \kappa^{2} r_1, \lb{3.15} \\
   p_2\circ\eta & = \eta' \kappa^{2} p_1, \\
   \eta' s_2\circ\eta & = s_1 + \kappa^{-1}(\kappa' - \nu p_1^{-1}), \\
   \eta' q_2\circ\eta & = \kappa^2 q_1 + 2\kappa\nu s_1 - \nu^2 p_1^{-1} + \kappa'\nu - \kappa\nu'.
 \end{split} 
 \end{align}
 Moreover, the map $V$ given by 
 \begin{align}
V:  \begin{cases} 
 L^2((a_2,b_2);r_2(x)dx)\rightarrow L^2((a_1,b_1);r_1(x)dx),  \\
 f_2 \mapsto (V f_2)(x_1) = \kappa(x_1) f_2(\eta(x_1)), \quad x_1\in(a_1,b_1), 
 \end{cases} 
 \end{align}
is unitary, with 
\begin{equation} 
S_1 = V S_2 V^{-1}.
\end{equation}    
\end{theorem}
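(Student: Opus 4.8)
The overall strategy is to compare the two families of de Branges spaces inside the common model space $L^2(\R;d\mu)$, where $\mu:=\mu_1=\mu_2$. By Corollary \ref{cordBschrembL2} the spaces $\mathfrak{B}_1(x)$, $x\in(a_1,b_1)$, and $\mathfrak{B}_2(x)$, $x\in(a_2,b_2)$, are all isometrically embedded in $L^2(\R;d\mu)$, and by Corollary \ref{cordBschrincl} each of the two families is a strictly increasing, continuously ordered chain with dense union. The central tool is de Branges' ordering (subspace) theorem: if two de Branges spaces are isometrically contained in a common $L^2$ space and the quotient of their structure functions is of bounded type in $\C_+$ in one of the two directions, then the two spaces are ordered by inclusion. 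Accordingly, the first step is to upgrade the hypothesis that \eqref{eqnquotE1E2} is of bounded type for a single pair $(x_1,x_2)$ to the statement that $\mathfrak{B}_1(x_1)$ and $\mathfrak{B}_2(x_2)$ are comparable for \emph{every} $x_1\in(a_1,b_1)$ and $x_2\in(a_2,b_2)$. This uses the chain structure of Corollary \ref{cordBschrincl} together with the standard fact that an isometric inclusion of de Branges spaces forces the corresponding quotient of structure functions to be of bounded type, so that bounded type of the quotient \eqref{eqnquotE1E2} propagates along both chains.

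Granting pairwise comparability, I would define the reparametrization by matching: for $x_1\in(a_1,b_1)$ put $\eta(x_1):=\sup\{y\in(a_2,b_2)\,:\,\mathfrak{B}_2(y)\subseteq\mathfrak{B}_1(x_1)\}$. The continuity \eqref{eqndBschrcontinuous} of the second chain gives $\mathfrak{B}_2(\eta(x_1))=\overline{\bigcup_{y<\eta(x_1)}\mathfrak{B}_2(y)}\subseteq\mathfrak{B}_1(x_1)$, while for $y>\eta(x_1)$ the ordering forces $\mathfrak{B}_1(x_1)\subseteq\mathfrak{B}_2(y)$, whence $\mathfrak{B}_1(x_1)\subseteq\bigcap_{y>\eta(x_1)}\mathfrak{B}_2(y)=\mathfrak{B}_2(\eta(x_1))$; therefore $\mathfrak{B}_1(x_1)=\mathfrak{B}_2(\eta(x_1))$, which in particular determines $\eta(x_1)$ uniquely. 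That $\eta(x_1)$ lies in the open interval $(a_2,b_2)$, that $\eta$ is strictly increasing and surjective onto $(a_2,b_2)$, and that $\eta$ and $\eta^{-1}$ are continuous, all follow by combining the strictness of the inclusions and the density statements of Corollaries \ref{cordBschrembL2} and \ref{cordBschrincl} with \eqref{eqndBschrcontinuous}. Thus $\eta$ is an increasing homeomorphism of $(a_1,b_1)$ onto $(a_2,b_2)$ with $\mathfrak{B}_1(x)=\mathfrak{B}_2(\eta(x))$ for every $x$.

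Next I would set $V:=\mathcal{F}_1^{-1}\mathcal{F}_2$, with $\mathcal{F}_j$ the spectral transformation from Section \ref{s2}. Since each $\mathcal{F}_j$ is unitary from $L^2((a_j,b_j);r_j(x)dx)$ onto $L^2(\R;d\mu)$ and conjugates $S_j$ to multiplication by the independent variable, $V$ is unitary from $L^2((a_2,b_2);r_2(x)dx)$ onto $L^2((a_1,b_1);r_1(x)dx)$ and $S_1=VS_2V^{-1}$. By Theorem \ref{thmdBschrBT}, $\mathcal{F}_2$ maps $L^2_0((a_2,\eta(x));r_2(x)dx)$ onto $\mathfrak{B}_2(\eta(x))=\mathfrak{B}_1(x)$, which $\mathcal{F}_1^{-1}$ maps onto $L^2_0((a_1,x);r_1(x)dx)$; hence $V$ carries $L^2_0((a_2,\eta(x));r_2(x)dx)$ onto $L^2_0((a_1,x);r_1(x)dx)$ for every $x\in(a_1,b_1)$, so that $V$ takes the operator of multiplication by a bounded Borel function $h$ on $(a_2,b_2)$ to multiplication by $h\circ\eta$ on $(a_1,b_1)$. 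A standard argument then shows that $V$ is a weighted composition operator, $(Vf)(x)=\kappa(x)f(\eta(x))$ for some measurable $\kappa$, and the unitarity of $V$, via the change of variables $x\mapsto\eta(x)$, forces $\eta$ and $\eta^{-1}$ to be locally absolutely continuous and yields the first identity $\eta'(r_2\circ\eta)=\kappa^2 r_1$ a.e.\ in \eqref{3.15}; in particular $\kappa^2>0$ a.e., so $\kappa$ may be taken positive. Writing out $\mathcal{F}_1 V=\mathcal{F}_2$ against the reproducing relations and using this first identity yields the pointwise identity $\phi_{1,z}(x)=\kappa(x)\,\phi_{2,z}(\eta(x))$ for all $z\in\C$ and $x\in(a_1,b_1)$.

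Finally, introducing $\nu$ through the companion relation $\phi_{1,z}^{[1]}=\kappa^{-1}(\phi_{2,z}^{[1]}\circ\eta)+\nu\,(\phi_{2,z}\circ\eta)$ between the first quasi-derivatives, one checks, using that $\phi_{j,z}$ and $\phi_{j,z}^{[1]}$ never vanish simultaneously, that $\kappa$ and $\nu$ are locally absolutely continuous and real-valued. Then the requirement that $(\phi_{1,z},\phi_{1,z}^{[1]})$ solve the first-order system \eqref{eqn:system} for the coefficients $(p_1,q_1,r_1,s_1)$ whenever $(\phi_{2,z}\circ\eta,\phi_{2,z}^{[1]}\circ\eta)$ solves the correspondingly transformed system for $(p_2,q_2,r_2,s_2)$ becomes, after a direct computation and comparison of the coefficients of two linearly independent solutions, precisely the three remaining identities in \eqref{3.15}; the same local structure of $V$ shows $V$ maps $\dom{S_2}$ onto $\dom{S_1}$, completing the identification $S_1=VS_2V^{-1}$ in the stated form. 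The principal obstacle is the first part, namely verifying that bounded type of the single quotient \eqref{eqnquotE1E2} really does render the two chains of de Branges subspaces of $L^2(\R;d\mu)$ pairwise comparable so that the ordering theorem delivers $\eta$, together with the measure-theoretic and regularity step that extracts from the abstract unitary $V=\mathcal{F}_1^{-1}\mathcal{F}_2$ a weighted composition operator with locally absolutely continuous data $\eta$, $\kappa$, $\nu$; the verification of \eqref{3.15} itself, though lengthy, is routine.
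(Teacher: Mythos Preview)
Your overall architecture matches the paper's: embed both de Branges chains in $L^2(\R;d\mu)$, invoke the ordering theorem, match the chains via $\eta$ using the continuity \eqref{eqndBschrcontinuous}, and then read off the Liouville transform. The propagation of the bounded-type hypothesis and the construction of $\eta$ as a homeomorphism are handled the same way (the paper simply cites \cite[Lemma~2.2]{E12} for the former and argues the boundary cases for $\eta$ explicitly).

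Where you diverge is in extracting the pointwise relation between the fundamental systems. The paper does \emph{not} go through the multiplication-operator/weighted-composition argument. Instead, once $\mathfrak{B}_1(x_1)=\mathfrak{B}_2(\eta(x_1))$ isometrically, it applies de~Branges' structure theorem \cite[Theorem~I]{dB60}, which yields directly a real $2\times2$ matrix of determinant one relating $(\phi_{1,z},\phi_{1,z}^{[1]})$ to $(\phi_{2,z}\circ\eta,\phi_{2,z}^{[1]}\circ\eta)$; the high-energy asymptotics of Theorem~\ref{A.2} (namely $\phi_{j,z}(x)/\phi_{j,z}^{[1]}(x)\to 0$ along nonreal rays) then force the upper-right entry to vanish, giving the lower-triangular form \eqref{eqndBdet} with diagonal $(\kappa,\kappa^{-1})$ in one stroke. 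Local absolute continuity of $\eta$ is obtained from the equality of reproducing kernels $K_1(\zeta,\zeta,x_1)=K_2(\zeta,\zeta,\eta(x_1))$ via a change-of-variables lemma from \cite{Na55}, after which the regularity of $\kappa,\nu$ is read off from \eqref{eqndBdet}.

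Your route is viable but has a genuine gap at the quasi-derivative step. From $\phi_{1,z}=\kappa\,\phi_{2,z}\circ\eta$ alone you only get
\[
\phi_{1,z}^{[1]}=\frac{p_1\kappa\eta'}{p_2\circ\eta}\,\phi_{2,z}^{[1]}\circ\eta+(\cdots)\,\phi_{2,z}\circ\eta,
\]
and you have not explained why the first coefficient equals $\kappa^{-1}$; asserting the ``companion relation'' in that form presupposes the second identity in \eqref{3.15}. In the paper this is exactly what \cite[Theorem~I]{dB60} supplies (determinant one). If you want to avoid that theorem, you must supply an independent argument, for instance: from the reproducing-kernel equality $K_1(z_1,z_2,x_1)=K_2(z_1,z_2,\eta(x_1))$ and the Lagrange identity one gets $W(\phi_{1,z_1},\phi_{1,z_2})(x_1)=W(\phi_{2,z_1},\phi_{2,z_2})(\eta(x_1))$, whence the transition matrix has determinant one and the coefficient is forced to be $\kappa^{-1}$. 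Without this (or the paper's appeal to \cite{dB60} and Theorem~\ref{A.2}), the derivation of \eqref{3.15} does not close.
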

\begin{proof}
First of all one notes that the function in \eqref{eqnquotE1E2} is of bounded type for all points 
$x_j\in(a_j,b_j)$, $j=1, 2$ (see, e.g., \cite[Lemma\ 2.2]{E12}), and we now fix some arbitrary 
$x_1\in(a_1,b_1)$. Since for each $x_2\in(a_2,b_2)$, both $\mathfrak{B}_1(x_1)$ and 
$\mathfrak{B}_2(x_2)$ are 
isometrically contained in $L^2(\R;d\mu_1)$, one infers from de Branges' subspace ordering 
theorem (see \cite[Theorem\ 35]{dB68}), that either $\mathfrak{B}_1(x_1)$ is contained in 
$\mathfrak{B}_2(x_2)$ or $\mathfrak{B}_2(x_2)$ is contained in $\mathfrak{B}_1(x_1)$.
Next, we claim that the infimum $\eta(x_1)$ of all $x_2\in(a_2,b_2)$ for which 
$\mathfrak{B}_1(x_1)\subseteq \mathfrak{B}_2(x_2)$, lies in $(a_2,b_2)$.
 Otherwise, either $\mathfrak{B}_2(x_2)\subseteq \mathfrak{B}_1(x_1)$ for all $x_2\in(a_2,b_2)$, 
 or $\mathfrak{B}_1(x_1)\subseteq \mathfrak{B}_2(x_2)$ for all $x_2\in(a_2,b_2)$.
 In the first case, one infers that $\mathfrak{B}_1(x_1)$ is dense in $L^2(\R;d\mu_1)$, which is impossible in view of Corollary \ref{cordBschrincl}.
  In the second case, one obtains for each function $F\in \mathfrak{B}_1(x_1)$ and $\zeta\in\C$ that 
 \begin{align}
  |F(\zeta)|^2 & \leq \left| \dbspr{F}{K_2(\zeta,\cdot\,,x_2)}_{\mathfrak{B}_2(x_2)} \right|^2
 \leq \|F\|_{\mathfrak{B}_2(x_2)}^2 \dbspr{K_2(\zeta,\cdot\,,x_2)}{K_2(\zeta,\cdot\,,x_2)}_{\mathfrak{B}_2(x_2)} 
 \no \\
& = \|F\|_{\mathfrak{B}_1(x_1)}^2 K_2(\zeta,\zeta,x_2), \quad x_2\in (a_2,b_2).
 \end{align}
However, \eqref{eqndBschrRepKer} implies $K_2(\zeta,\zeta,x_2)\rightarrow0$ as $x_2\rightarrow a_2$ and, as a result, $\mathfrak{B}_1(x_1)=\lbrace 0\rbrace$, which contradicts Theorem \ref{thmdBschrBT}.
From \eqref{eqndBschrcontinuous} one infers that 
 \begin{align}
   \mathfrak{B}_2(\eta(x_1)) = \overline{\bigcup_{x_2<\eta(x_1)} \mathfrak{B}_2(x_2)} \subseteq \mathfrak{B}_1(x_1) \subseteq \bigcap_{x_2>\eta(x_1)} \mathfrak{B}_2(x_2) = \mathfrak{B}_2(\eta(x_1)),
 \end{align}
 and hence $\mathfrak{B}_1(x_1)=\mathfrak{B}_2(\eta(x_1))$, including equality of norms. 
 In particular, this implies the existence of functions $\kappa$, $\nu$ on $(a_1,b_1)$ with
 \begin{align}\label{eqndBdet}
  \begin{pmatrix} \phi_{1,z}(x_1) \\ \phi_{1,z}^\qd(x_1) \end{pmatrix} 
  = \begin{pmatrix} \kappa(x_1) & 0 \\ \nu(x_1) & \kappa(x_1)^{-1} \end{pmatrix} 
  \begin{pmatrix} \phi_{2,z}(\eta(x_1)) \\ \phi_{2,z}^\qd(\eta(x_1)) \end{pmatrix}, 
  \quad z\in\C, \; x_1\in(a_1,b_1),
 \end{align}
in view of \cite[Theorem\ I]{dB60} and the high-energy asymptotics in Theorem \ref{A.2}, which imply 
$\phi_{j,z}(x_j) \phi_{j,z}^\qd(x_j)^{-1} \rightarrow 0$ as $|z|\rightarrow\infty$ along non-real rays. 
 
The function $\eta: (a_1,b_1)\rightarrow(a_2,b_2)$ defined above is strictly increasing in view of Corollary \ref{cordBschrincl}, and continuous by \eqref{eqndBschrcontinuous}.
Moreover, since for each  $\zeta\in\C$,
\begin{align}
    K_2(\zeta,\zeta,\eta(x_1)) = K_1(\zeta,\zeta,x_1) \rightarrow 0 \quad \text{as}\quad 
    x_1\rightarrow a,
\end{align}
one concludes that $\eta(x_1)\rightarrow a_2$ as $x_1\rightarrow a_1$.
Furthermore, \eqref{eqndBschrdense} shows that $\eta$ actually has to be a bijection.  
Using the equation for the reproducing kernels \eqref{eqndBschrRepKer} once more, one obtains for 
each $z\in\C$ that 
\begin{align}
  \int_{a_1}^{x_1} |\phi_{1,z}(x)|^2 r_1(x) dx = \int_{a_2}^{\eta(x_1)} |\phi_{2,z}(x)|^2 r_2(x) dx, \quad x_1\in(a_1,b_1).
\end{align}
This fact and an application of \cite[Chapter\ IX; Exercise\ 13]{Na55} and 
\cite[Chapter\ IX; Theorem\ 3.5]{Na55} imply that $\eta$ is locally absolutely continuous. 
 Hence, in view of \eqref{eqndBdet}, $\kappa$, $\kappa^{-1}$, and $\nu$ are locally absolutely continuous. 
One notes that the functions $\phi_{2,z}\circ\eta$ and $\phi_{2,z}^\qd\circ\eta$ are locally absolutely continuous by \cite[Chapter\ IX; Theorem\ 3.5]{Na55} since $\eta$ is strictly increasing. 
Differentiating equation \eqref{eqndBdet} with respect to $x_1\in(a_1,b_1)$ and using the differential 
equation (more precisely, the equivalent system \eqref{eqn:system}), yields the four relations in 
\eqref{3.15}. 

 Finally, one considers the unitary operator $\mathcal{F}_1^{-1} \mathcal{F}_2$ from $L^2((a_2,b_2);r_2(x)dx)$ onto $L^2((a_1,b_1);r_1(x)dx)$ for which one has 
 $S_1 = \mathcal{F}_1^{-1} \mathcal{F}_2 S_2 \mathcal{F}_2^{-1} \mathcal{F}_1$. 
 In order to identify this operator with $V$, one notes that for each $F\in L^2(\R;d\mu_1)$,
 \begin{align}
  \mathcal{F}_1^{-1}F(x_1) = \kappa(x_1) \int_\R \phi_{2,\lambda}(\eta(x_1)) F(\lambda) d\mu_1(\lambda) = \kappa(x_1) \mathcal{F}_2^{-1} F(\eta(x_1)),
 \end{align}
for a.e.\ $x_1\in (a_1,b_1)$.  This concludes the proof.
 \end{proof}

We note that the function defined in \eqref{eqnquotE1E2} of Theorem \ref{thmdBuniqS} is of bounded type if the de Branges functions $E_j(\,\cdot\,,x_j)$, $j=1, 2$,  for some $x_j\in(a_j,b_j)$, satisfy a particular growth condition. In fact, one can suppose that they belong to the Cartwright class, that is, that they are of finite exponential type and the logarithmic integrals
\begin{align}
 \int_\R \frac{\ln_+(|E_j(\lambda,x_j)|)}{1+\lambda^2} \, d\lambda < \infty, \quad j=1, 2,    \lb{3.24} 
\end{align}
exist (here $\ln_+$ denotes the positive part of the natural logarithm). 
Then, a theorem of Krein \cite[Theorem\ 6.17]{RR94}, \cite[Section\ 16.1]{Le96} states that these functions are of bounded type in the open upper and lower complex half-plane, and hence the quotient \eqref{eqnquotE1E2} in Theorem \ref{thmdBuniqS} is of bounded type as well. 
 In particular, we will show in the following section that it is always possible to choose particular real entire solutions $\phi_{1,z}$, $\phi_{2,z}$ such that \eqref{3.24} holds, provided the l.c.\ case prevails at the left endpoint. We emphasize that condition \eqref{eqnquotE1E2} in Theorem \ref{thmdBuniqS} cannot be dropped and that some additional assumption has to be imposed on the de Branges functions. Indeed, a counterexample in \cite{E12} shows that dropping such an additional condition is not even possible 
 in the standard case of regular Schr\"odinger operators. 
 
Naturally, if one has {\it a priori} additional information on some of the coefficients, one ends up with stronger uniqueness results. In particular, we are interested in the special case of Schr\"odinger operators with a distributional potential (i.e., the special case $p = r = 1$), for which the inverse uniqueness result simplifies considerably. 

\begin{corollary}\label{corInvUniqSchr}
Suppose that $p_j=r_j=1$, $j=1, 2$, and that the function 
  \begin{align}
   \frac{E_1(z,x_1)}{E_2(z,x_2)}, \quad z\in\C_+, 
  \end{align}
  is of bounded type for some $x_1\in (a_1,b_1)$ and $x_2\in(a_2,b_2)$.
  If $\mu_1= \mu_2$, then there is some $\eta_0\in\R$ and a locally absolutely continuous, real-valued function $\nu$ on $(a_1,b_1)$ such that 
  \begin{align}
  \begin{split} 
  s_2(\eta_0 +x_1) & = s_1(x_1) + \nu(x_1), \\
  q_2(\eta_0 +x_1) & = q_1(x_1) -2\nu(x_1)s_1(x_1) - \nu(x_1)^2 +\nu'(x_1),
  \end{split} 
\end{align}
 for almost all $x_1\in(a_1,b_1)$.  Moreover, the map $V$ given by 
 \begin{align}
V:  \begin{cases} 
 L^2((a_2,b_2);dx)\rightarrow L^2((a_1,b_1);dx),  \\
 f_2 \mapsto (V f_2)(x_1) = f_2(\eta_0 + x_1), \quad x_1\in(a_1,b_1), 
 \end{cases} 
 \end{align}
is unitary, with 
\begin{equation} 
S_1 = V S_2 V^{-1}.
\end{equation}    
\end{corollary}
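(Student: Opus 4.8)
The plan is to obtain the corollary as a direct specialization of Theorem \ref{thmdBuniqS}, feeding in the normalization $p_1 = p_2 = r_1 = r_2 = 1$ and simplifying the four relations in \eqref{3.15}. First I would apply Theorem \ref{thmdBuniqS}: since $E_1(z,x_1)/E_2(z,x_2)$ is of bounded type and $\mu_1 = \mu_2$, we get a locally absolutely continuous bijection $\eta$ of $(a_1,b_1)$ onto $(a_2,b_2)$ and locally absolutely continuous, real-valued functions $\kappa$, $\nu$ on $(a_1,b_1)$ satisfying \eqref{3.15}. Putting $r_1 = r_2 = p_1 = p_2 = 1$, the first relation reads $\eta' = \kappa^2$ and the second reads $1 = \eta'\kappa^2 = \kappa^4$ a.e.\ on $(a_1,b_1)$. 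Hence $\kappa^2 = 1$ a.e., and since $\kappa$ is continuous on the interval $(a_1,b_1)$, this forces $\kappa \equiv 1$ or $\kappa \equiv -1$; in particular $\kappa' = 0$ and $\eta' = \kappa^2 = 1$, so that $\eta(x_1) = x_1 + \eta_0$ for some constant $\eta_0 \in \R$ (bijectivity onto $(a_2,b_2)$ being already part of Theorem \ref{thmdBuniqS}).

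Next I would substitute $\eta' = 1$, $\kappa^2 = 1$, $\kappa^{-1} = \kappa$, $\kappa' = 0$, and $p_1^{-1} = 1$ into the remaining two relations of \eqref{3.15}. The third relation then gives $s_2(\eta_0 + x_1) = s_1(x_1) - \kappa\,\nu(x_1)$; introducing $\nu_1 := -\kappa\,\nu$ (again locally absolutely continuous and real-valued, as $\kappa$ is constant) this becomes $s_2(\eta_0 + x_1) = s_1(x_1) + \nu_1(x_1)$. For the fourth relation, using $\nu = -\kappa\,\nu_1$, $\nu' = -\kappa\,\nu_1'$, and $\kappa^2 = 1$ to rewrite the terms $2\kappa\nu s_1 = -2\nu_1 s_1$, $\nu^2 = \nu_1^2$, and $-\kappa\nu' = \nu_1'$, a short computation yields $q_2(\eta_0 + x_1) = q_1(x_1) - 2\nu_1(x_1) s_1(x_1) - \nu_1(x_1)^2 + \nu_1'(x_1)$. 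These are precisely the two asserted identities, with the corollary's $\nu$ being $\nu_1$.

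Finally, for the statement on $V$: Theorem \ref{thmdBuniqS} provides a unitary $V_0 \colon L^2((a_2,b_2);dx) \to L^2((a_1,b_1);dx)$ with $(V_0 f_2)(x_1) = \kappa\, f_2(\eta_0 + x_1)$ and $S_1 = V_0 S_2 V_0^{-1}$. Since $\kappa \in \{1,-1\}$ is a constant, the operator $V := \kappa V_0$ is still unitary, coincides with the translation $f_2(x_1) \mapsto f_2(\eta_0 + x_1)$ in the statement, and satisfies $V S_2 V^{-1} = V_0 S_2 V_0^{-1} = S_1$, which is the claim.

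I do not anticipate a genuine obstacle: once Theorem \ref{thmdBuniqS} is invoked the argument is purely algebraic. The only points that require a little care are the sign bookkeeping coming from the two admissible values of $\kappa$ (handled by passing from $V_0$ to $\kappa V_0$) and the re-expression of the $q$-relation through the shifted function $\nu_1 = -\kappa\nu$; everything else is immediate.
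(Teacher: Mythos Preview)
Your proof is correct and is exactly the specialization the paper intends: the corollary is stated without proof, as an immediate consequence of Theorem~\ref{thmdBuniqS}, and your derivation of $\kappa\equiv\pm1$, $\eta'\equiv1$, and the subsequent algebraic simplification of the relations~\eqref{3.15} (including the sign bookkeeping via $\nu_1=-\kappa\nu$ and $V=\kappa V_0$) is precisely what is required.
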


We note that the claim in Corollary \ref{corInvUniqSchr} formally implies (cf.\ \eqref{1.3}) 
 \begin{align}
  -s_2'(\eta_0 +x_1) + s_2(\eta_0 +x_1)^2 + q_2(\eta_0 +x_1) = -s_1'(x_1) + s_1(x_1)^2 + q_1(x_1).
 \end{align}
 Thus, the distributional potential of such a Schr\"odinger operator is uniquely determined by the spectral measure up to a shift by $\eta_0$. 
 More precisely, it is quite obvious that the transformation $V$ which maps a function $f_2$ on $(a_2,b_2)$ onto the shifted function $Vf_2$ given by 
 \begin{align}
  (V f_2)(x_1) = f_2(\eta_0+x_1), \quad x_1\in(a_1,b_1),
 \end{align}
 takes the domain $\mathfrak{D}_{\tau_2}$ onto  $\mathfrak{D}_{\tau_1}$.    
 Furthermore, a straightforward calculation shows that for each $f_2\in\mathfrak{D}_{\tau_2}$ we 
 have $(\tau_2 f_2)(\eta_0 +x_1) = (\tau_1 V f_2)(x_1)$ for almost all $x_1\in(a_1,b_1)$, which says 
 that $\tau_1$ and $\tau_2$ act the same up to a shift by $\eta_0$. 
 We will elaborate on this particular case of Schr\"odinger operators with distributional potentials in 
 Section \ref{s6}. 
 
 Finally, our inverse uniqueness theorem also allows us to deduce a result for Sturm--Liouville operators 
 in {\it impedance form}, that is, in the special case $q=s=0$ and $p=r$. This special case received 
 particular attention in the literature and we refer, for instance, to \cite{AHM05}, \cite{An88}, \cite{An88a}, 
\cite[Section XVII.4]{CS89}, \cite{CM93}, \cite{Gl91}, \cite{HM03}, \cite{RS92}, and the pertinent literature cited therein. 

\begin{corollary}\label{corInvUniqSchrImpedance}
Suppose that $q_j=s_j=0$, $p_j=r_j$, $j=1, 2$, and that the function 
\begin{align}
\frac{E_1(z,x_1)}{E_2(z,x_2)}, \quad z\in\C_+, 
\end{align}
is of bounded type for some $x_1\in (a_1,b_1)$ and $x_2\in(a_2,b_2)$.
If $\mu_1= \mu_2$, then there is a $c_1\in(a_1,b_1)$ and constants $\eta_0$, $\nu_0$, $\kappa_0\in\R$ such that 
\begin{align}
p_2(\eta_0 +x_1) & = p_1(x_1) \left(\nu_0 \int_{c_1}^{x_1} \frac{dt}{p_1(t)} + \kappa_0\right)^2 
\end{align}
for almost all $x_1\in(a_1,b_1)$. Moreover, the map $V$ given by  
\begin{align}
V: \begin{cases} 
L^2((a_2,b_2);r_2(x)dx) \to L^2((a_1,b_1);r_1(x)dx), \\
f_2 \mapsto (V f_2)(x_1) = \left(\nu_0 \int_{c_1}^{x_1} \frac{dt}{p_1(t)} + \kappa_0\right) f_2(\eta_0 + x_1), \quad x_1\in(a_1,b_1),
\end{cases} 
\end{align}
is unitary, with 
\begin{equation}
S_1 = V S_2 V^{-1}.
\end{equation} 
\end{corollary}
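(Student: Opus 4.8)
The plan is to obtain Corollary~\ref{corInvUniqSchrImpedance} as a direct specialization of Theorem~\ref{thmdBuniqS}. The bounded‑type hypothesis on $E_1(z,x_1)/E_2(z,x_2)$ and the equality $\mu_1=\mu_2$ are precisely those of Theorem~\ref{thmdBuniqS}, so one already has a strictly increasing, locally absolutely continuous bijection $\eta\colon(a_1,b_1)\to(a_2,b_2)$ together with locally absolutely continuous, real‑valued functions $\kappa,\nu$ on $(a_1,b_1)$ satisfying the four relations in \eqref{3.15}; moreover $\kappa$ and $\kappa^{-1}$ are locally absolutely continuous, in particular $\kappa$ is nowhere zero, by \eqref{eqndBdet}. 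What remains is to feed the impedance constraints $q_j=s_j=0$, $p_j=r_j$, $j=1,2$, into \eqref{3.15} and integrate the resulting system; there is no genuinely new analytic input.

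First I would insert $s_1=s_2=0$ into the third relation of \eqref{3.15}, which collapses to $0=\kappa^{-1}(\kappa'-\nu p_1^{-1})$, that is,
\begin{align}
 \nu = p_1\,\kappa'\quad\text{a.e.\ on }(a_1,b_1).
\end{align}
Substituting this together with $q_1=q_2=0=s_1$ into the fourth relation, the terms $-\nu^2 p_1^{-1}$ and $\kappa'\nu$ sum to zero, leaving $0=-\kappa\nu'$; since $\kappa$ is nowhere zero this forces $\nu'\equiv 0$, so $\nu\equiv\nu_0$ for a constant $\nu_0\in\R$. Then $\kappa'=\nu_0 p_1^{-1}$, and integrating from a fixed point $c_1\in(a_1,b_1)$ (legitimate since $p_1^{-1}\in L^1_{\loc}$) gives
\begin{align}
 \kappa(x_1)=\nu_0\int_{c_1}^{x_1}\frac{dt}{p_1(t)}+\kappa_0,\quad \kappa_0:=\kappa(c_1)\in\R.
\end{align}

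Next I would use $p_j=r_j$ in the first two relations of \eqref{3.15}, which become $\eta'\,(p_2\circ\eta)=\kappa^2 p_1$ and $p_2\circ\eta=\eta'\,\kappa^2 p_1$. Substituting the second into the first yields $(\eta')^2\,\kappa^2 p_1=\kappa^2 p_1$, and since $\kappa^2 p_1\ne 0$ a.e.\ this forces $(\eta')^2=1$ a.e.; as $\eta$ is increasing and locally absolutely continuous, $\eta(x_1)=\eta_0+x_1$ for some $\eta_0\in\R$. Inserting $\eta'=1$ into $p_2\circ\eta=\eta'\kappa^2 p_1$ produces
\begin{align}
 p_2(\eta_0+x_1)=p_1(x_1)\,\kappa(x_1)^2=p_1(x_1)\left(\nu_0\int_{c_1}^{x_1}\frac{dt}{p_1(t)}+\kappa_0\right)^{2}
\end{align}
for a.e.\ $x_1\in(a_1,b_1)$, the claimed coefficient identity. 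The unitarity of $V$ and the intertwining $S_1=VS_2V^{-1}$ then follow immediately by inserting $\eta(x_1)=\eta_0+x_1$ and the explicit formula for $\kappa$ into the corresponding statements of Theorem~\ref{thmdBuniqS}.

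I do not expect a real obstacle: once Theorem~\ref{thmdBuniqS} is in hand this is purely algebraic. The only points needing mild care are that $\kappa$ may be divided by (it is nowhere zero because $\kappa^{-1}$ appears as an entry of the locally absolutely continuous transfer matrix in \eqref{eqndBdet}), the verification of the cancellation $-\nu^2 p_1^{-1}+\kappa'\nu=0$ that reduces the fourth relation to $\nu'\equiv 0$, and the observation that $\eta'\ge 0$ together with $(\eta')^2=1$ forces $\eta'=1$ rather than $\eta'=-1$.
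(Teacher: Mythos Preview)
Your proposal is correct and is exactly the intended derivation: the paper states this corollary without proof as a direct specialization of Theorem~\ref{thmdBuniqS}, and your computation is precisely how one fills in the details. The algebraic reductions (from the third relation $\nu=p_1\kappa'$, then the cancellation in the fourth relation yielding $\nu'=0$, and finally $(\eta')^2=1$ from $p_j=r_j$) are all correct and carefully justified.
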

 
 \begin{remark} \lb{r3.7} 
 We note that the function  
 \begin{align}
  \nu_0 \int_{c_1}^{x_1} \frac{dt}{p_1(t)} + \kappa_0, \quad x_1\in(a_1,b_1), 
 \end{align}
 in Corollary \ref{corInvUniqSchrImpedance} resembles the function $\kappa$ in 
 Theorem \ref{thmdBuniqS} which must not vanish anywhere. Consequently, the constants appearing in Corollary \ref{corInvUniqSchrImpedance} are restricted to values such that this function has no zeros. 
This allows one to strengthen our inverse uniqueness result, provided that one imposes some additional assumptions on the coefficients: For example, if it is known {\it a priori} that $p_1^{-1}$ is neither integrable near the left nor near the right endpoint, then one concludes that $\nu_0$ is zero and hence the impedance functions are equal up to a scalar multiple (and a simple shift by $\eta_0$). 
 Moreover, if one assumes that the left endpoints are regular with the same non-Dirichlet boundary conditions there, and that the real entire solutions $\phi_{1,z}$ and $\phi_{2,z}$ have the same boundary values there, then it is also possible to conclude that $\nu_0$ is zero (in view of \eqref{eqndBdet}). 
 Finally, the constant $\nu_0$ also has to be zero if one assumes that both differential expressions are 
 regular at one endpoint with Neumann boundary conditions there (upon letting $x_1$ tend to this 
 endpoint in \eqref{eqndBdet}). 
As a last remark, we note that because of the impedance form, when $\nu_0$ is zero, both differential expressions and hence also both operators will act the same (up to a shift by $\eta_0$) although the impedance functions are only equal up to a scalar multiple (and a shift by $\eta_0$). 
\end{remark}

 \section{Inverse Uniqueness Results in Terms of Two Discrete Spectra} \lb{s4} 

 As in the previous section, let $\tau$ be a Sturm--Liouville expression of the form \eqref{2.2}, satisfying Hypothesis \ref{h2.1}, and $S$ be some self-adjoint realization with separated boundary conditions. We will now show that instead of the spectral measure, it also suffices to know two discrete spectra associated with two different boundary conditions at the left endpoint. Necessarily, therefore, this endpoint has to be in the l.c.\ case. In this context there exists a particular choice of real entire fundamental systems as in Theorem \ref{thmREFS} (see \cite[Section\ 8]{EGNT12a}).
 
 \begin{theorem}\label{thmLC}
 If $\tau$ is in the l.c.\ case at $a$, then there exists a real entire fundamental system  $\theta_z$, $\phi_z$ of $(\tau-z)u=0$ with $W(\theta_z,\phi_z)=1$, such that $\phi_z$ lies in $\dom{S}$ near $a$, 
 \begin{align}
  W\big(\theta_{z_{{}_1}},\phi_{z_{{}_2}}\big)(a) = 1, \quad 
  W\big(\theta_{z_{{}_1}},\theta_{z_{{}_2}}\big)(a) = W\big(\phi_{z_{{}_1}},\phi_{z_{{}_2}}\big)(a)=0, \quad z_1,\,z_2\in\C.
 \end{align}
 In this case, the corresponding singular Weyl--Titchmarsh--Kodaira function $m$ is a Nevanlinna--Herglotz function with representation 
 \begin{align}\label{eqnSThergMrep}
  m(z) = \re(m(i)) + \int_\R 
 \bigg( \frac{1}{\lambda-z} - \frac{\lambda}{1+\lambda^2} \bigg) d\mu(\lambda),\quad z\in\C\backslash\R.
 \end{align}
 \end{theorem}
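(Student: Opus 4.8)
The plan is to construct the fundamental system by solving the differential equation with initial conditions at the left endpoint $a$, which is legitimate precisely because $\tau$ is in the limit circle case there. First I would fix a boundary condition function for $\dom{S}$ at $a$, say realized by a function whose behaviour near $a$ is prescribed by a pair of generalized boundary values $(g_0, g_1)$ depending entire-analytically on $z$ (in the l.c.\ case the Wronskians $W(u,\ol v)(a)$ are well-defined for all $u \in \dom{\Tmax}$, so these boundary values make sense). I would then \emph{define} $\phi_z$ as the solution of $(\tau - z)u = 0$ whose ``value'' and ``quasi-derivative'' at $a$ are $(\sin\alpha, -\cos\alpha)$ for the angle $\alpha$ encoding the boundary condition at $a$, and $\theta_z$ as the solution with boundary data $(\cos\alpha, \sin\alpha)$ there. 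Since solutions of the system \eqref{eqn:system} depend entire-analytically (jointly) on $z$ and on the initial data, and since in the l.c.\ case at a regular-type endpoint the limiting values $u(a)$, $u^\qd(a)$ (suitably interpreted via the standard l.c.\ modification of the endpoint) are entire in $z$, the functions $z \mapsto \phi_z(c)$ etc.\ are entire. The Wronskian relations at $a$ then hold by construction: $W(\theta_{z_1}, \phi_{z_2})(a)$, $W(\theta_{z_1},\theta_{z_2})(a)$, $W(\phi_{z_1},\phi_{z_2})(a)$ are computed directly from the (constant in $x$, hence boundary) values of the data, giving $\det\begin{pmatrix}\cos\alpha & \sin\alpha\\ \sin\alpha & -\cos\alpha\end{pmatrix} = -1$... here I would actually arrange the signs/normalization so that $W(\theta_z,\phi_z) = 1$ and $W(\theta_{z_1},\phi_{z_2})(a) = 1$, $W(\theta_{z_1},\theta_{z_2})(a) = W(\phi_{z_1},\phi_{z_2})(a) = 0$, which is immediate because the $2\times 2$ matrix of boundary data is $z$-independent. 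That $\phi_z$ lies in $\dom S$ near $a$ is by construction of $\alpha$; that $\phi_z$, $\theta_z$ form a fundamental system follows from $W(\theta_z,\phi_z) = 1 \neq 0$. The reference \cite[Section\ 8]{EGNT12a} supplies the rigorous version of this construction in the present distributional setting, so I would cite it for the existence of such a system rather than redo it.

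Having fixed this particular system, the second half of the statement is that the associated singular Weyl--Titchmarsh--Kodaira function $m$ is Nevanlinna--Herglotz. The key point is the identity, valid for $z \in \C_+$ and following from the Lagrange identity \eqref{eqn:lagrange} applied to $\psi_z = \theta_z + m(z)\phi_z \in \dom S$ near $b$ together with the boundary condition that makes $W(\psi_z, \ol{\psi_z})(a)$ computable: one gets
\begin{align*}
 \im(m(z)) = \im(z) \int_a^b |\psi_z(x)|^2 \, r(x)\, dx \geq 0,
\end{align*}
provided the boundary term at $a$ contributes correctly. Here is where the special normalization is essential: using $W(\theta_{z_1},\theta_{z_2})(a) = W(\phi_{z_1},\phi_{z_2})(a) = 0$ and $W(\theta_{z_1},\phi_{z_2})(a) = 1$ with $z_1 = z$, $z_2 = \ol z$, one computes $W(\psi_z, \ol{\psi_z})(a) = W(\theta_z + m(z)\phi_z,\, \ol{\theta_z} + \ol{m(z)}\,\ol{\phi_z})(a)$, and the four Wronskian terms collapse to $\ol{m(z)} \cdot W(\theta_z,\ol{\phi_z})(a) + m(z) W(\phi_z,\ol{\theta_z})(a) = \ol{m(z)} - m(z) = -2i\,\im(m(z))$. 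Together with the vanishing of the Wronskian at $b$ (both $\psi_z$ and $\ol{\psi_z} = \psi_{\ol z}$ lie in $\dom S$ near $b$, as $\ol{\psi_z}$ solves $(\tau - \ol z)u = 0$), the Lagrange identity over $(a,b)$ then yields $2i\,\im(z) \|\psi_z\|_r^2 = W(\psi_z,\ol{\psi_z})(b) - W(\psi_z,\ol{\psi_z})(a) = 2i\,\im(m(z))$, i.e.\ the displayed inequality. Since $m$ is already analytic on $\rho(S) \supseteq \C\backslash\R$ and satisfies $m(z) = \ol{m(\ol z)}$ by \eqref{eqprop::mpsi}, positivity of $\im(m)$ on $\C_+$ makes $m$ a Nevanlinna--Herglotz function.

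The representation \eqref{eqnSThergMrep} is then the standard Nevanlinna--Herglotz integral representation: any such function admits $m(z) = c_1 + c_2 z + \int_\R \big(\frac{1}{\lambda - z} - \frac{\lambda}{1+\lambda^2}\big)\,d\mu(\lambda)$ with $c_2 \geq 0$ and $\mu$ the measure from \eqref{defrho}, and one must check $c_2 = 0$ and $c_1 = \re(m(i))$. The value $c_1 = \re(m(i))$ is immediate by evaluating at $z = i$ (the integral term has vanishing real part there since $\re\big(\frac{1}{\lambda - i} - \frac{\lambda}{1+\lambda^2}\big) = \frac{\lambda}{1+\lambda^2} - \frac{\lambda}{1+\lambda^2} = 0$). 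For $c_2 = 0$ I would invoke the high-energy asymptotics of $m$ along the imaginary axis --- namely that $\im(m(iy))/y \to 0$ as $y \to \infty$ --- which follows from the growth estimates on $\phi_z$, $\theta_z$ established in Appendix \ref{sA} (Theorem \ref{A.2}), showing $m(iy) = o(y)$; alternatively $c_2 = \lim_{y\to\infty} m(iy)/(iy)$ and the representing measure being the spectral measure forces $c_2 = 0$ because $\mathcal F$ is unitary onto all of $\Lrmu$ (a nonzero $c_2$ would correspond to a point mass ``at infinity'' incompatible with unitarity). The main obstacle, and the only genuinely delicate point, is the careful treatment of the boundary term at $a$ in the limit circle case --- making rigorous that $W(\psi_z,\ol{\psi_z})(a)$ equals the claimed combination of the entire Wronskian data, uniformly in $z$, which is exactly what the specially normalized fundamental system from \cite[Section\ 8]{EGNT12a} is designed to provide; once that is in hand, the Herglotz property and the representation are routine.
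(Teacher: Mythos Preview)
The paper does not prove this theorem in the text; it is quoted from \cite[Section~8]{EGNT12a}, which you correctly identify and cite for the construction of the fundamental system. Your outline of the Herglotz argument via the Lagrange identity is the standard one and is carried out essentially as you describe in that reference: the Wronskian normalizations $W(\theta_{z_1},\phi_{z_2})(a)=1$, $W(\theta_{z_1},\theta_{z_2})(a)=W(\phi_{z_1},\phi_{z_2})(a)=0$ make the boundary term at $a$ collapse to $-2i\,\im(m(z))$, and the boundary term at $b$ vanishes because $\psi_z$ and $\overline{\psi_z}=\psi_{\bar z}$ both lie in $\dom{S}$ near $b$, yielding $\im(m(z))=\im(z)\,\|\psi_z\|_r^2>0$ on $\C_+$.

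There is one genuine gap. For the absence of the linear term ($c_2=0$) in the Herglotz representation you appeal to Theorem~\ref{A.2}, but Appendix~\ref{sA} is written under the standing hypothesis that $\tau$ is \emph{regular} at $a$, whereas Theorem~\ref{thmLC} assumes only the limit circle case. So that citation does not apply here. Your alternative ``point mass at infinity'' heuristic is closer to what is actually needed: since $c_2=\lim_{y\to\infty}\im(m(iy))/y=\lim_{y\to\infty}\|\psi_{iy}\|_r^2$, one has to show this limit vanishes. This is where the l.c.\ hypothesis is used once more (uniform square-integrability of solutions near $a$) together with resolvent/Green's function decay on the complementary part of the interval; the details are in \cite[Section~8]{EGNT12a}, which is why the present paper simply imports the result rather than reproving it. Apart from this mis-citation, your sketch is correct and matches the intended argument.
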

 
 In contrast to the general case (cf.\ Remark \ref{remSFS}), real entire fundamental systems $\theta_z$, $\phi_z$ as in Theorem \ref{thmLC} are unique up to scalar multiples and $\theta_z$ is unique up to adding scalar multiples of $\phi_z$.  More precisely, all fundamental systems of solutions with the properties of Theorem \ref{thmLC} are given by
 \begin{align}\label{eqnLCnewFS}
  \tilde{\theta}_z = \E^{-g_0} \theta_z - f_0 \phi_z, \quad  \tilde{\phi}_z = \E^{g_0} \phi_z, \quad z\in\C,
 \end{align}
 for some constants $f_0\in\R$ and $g_0$ real modulo $i\pi$. The corresponding singular Weyl--Titchmarsh--Kodaira functions are related by 
 \begin{align}\label{eqnLCnewM}
 \widetilde{m}(z) = \E^{-2g_0} m(z) + \E^{-g_0} f_0, \quad z\in\rho(S),
 \end{align}
 and the spectral measures via  
 \begin{align}
 d\tilde{\mu} = \E^{-2g_0} d\mu.
 \end{align}
 For example, if $\tau$ is regular at $a$, then one can take $\theta_z$, $\phi_z$ to be the fundamental system of solutions of $(\tau-z)u=0$ with the initial conditions 
 \begin{equation} 
\theta_z(a)=\phi_z^\qd(a)=\cos(\varphi_a), \quad -\theta_z^\qd(a)=\phi_z(a)=\sin(\varphi_a),   
\end{equation} 
for some suitable $\varphi_a\in[0,\pi)$.
 Since these solutions clearly are of exponential growth order less than one (cf.\ \cite[Theorem\ 2.7]{EGNT12a}), they belong to the Cartwright class. 
 The following lemma shows that this is the case as well for general solutions as in Theorem \ref{thmLC}. 
We note that this result (for classical Sturm--Liouville operators on the half-line) is essentially contained in an article by Krein \cite{Kr52}. 

 \begin{lemma}\label{lemLC}
  Suppose that $\tau$ is in the l.c.\ case at $a$ and $\theta_z$, $\phi_z$ is a fundamental system as in Theorem \ref{thmLC}. Then the real entire functions 
  \begin{align}\label{eqnLCef}
    z\mapsto\theta_z(c), \quad z\mapsto\theta_z^\qd(c), \quad z\mapsto \phi_z(c), \quad 
     z\mapsto \phi_z^\qd(c), 
  \end{align}
  belong to the Cartwright class for each $c\in(a,b)$. 
 \end{lemma}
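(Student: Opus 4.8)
The plan is to reduce the claim to the known exponential growth estimates for the solutions and then invoke the Krein-type characterization of the Cartwright class. First I would recall from \cite[Theorem\ 2.7]{EGNT12a} (or the analogous estimate used to establish the high-energy asymptotics in Appendix \ref{sA}) that for a fixed fundamental system of solutions normalized at an interior point, the entire functions $z \mapsto u_z(c)$ and $z \mapsto u_z^\qd(c)$ are of exponential growth order at most $1/2$, hence certainly of finite exponential type; this handles the growth-order side of the Cartwright condition. The remaining ingredient is the finiteness of the logarithmic integrals $\int_\R \ln_+(|u_z(\lambda)|)(1+\lambda^2)^{-1}\,d\lambda$ for the four functions in \eqref{eqnLCef}, which is the Cartwright-class criterion (see \cite[Section\ 16.1]{Le96}).

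For the logarithmic integral I would exploit that $\tau$ is in the l.c.\ case at $a$, so the solution $\phi_z$ lies in $\Lr$ near $a$ and, together with $\theta_z$, satisfies the $a$-independent normalizations of Theorem \ref{thmLC}. Fix $c \in (a,b)$ and consider the de Branges function $E(z,c) = \phi_z(c) + i\phi_z^\qd(c)$ from \eqref{eqndBschrE}. Because of the l.c.\ hypothesis, the reproducing-kernel identity \eqref{eqnELag} evaluated at $\zeta = z = \lambda \in \R$ shows $|E(\lambda,c)|^2 = \phi_\lambda(c)^2 + \phi_\lambda^\qd(c)^2 = 1 + 2\lambda\int_a^c |\phi_\lambda(x)|^2 r(x)\,dx + O(1)$ type control via the Lagrange identity; more to the point, $|E(\,\cdot\,,c)|$ grows at most polynomially faster than $\|\phi_\lambda\|_{L^2((a,c);r\,dx)}$ on the real axis, and the latter is bounded above by a power of $\lambda$ uniformly on compact $c$-intervals by the standard Gronwall estimate for the system \eqref{eqn:system}. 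Consequently $\int_\R \ln_+(|E(\lambda,c)|)(1+\lambda^2)^{-1}\,d\lambda < \infty$, so $E(\,\cdot\,,c)$ belongs to the Cartwright class. Since $\phi_z(c) = \re E(z,c)$-type combinations — more precisely $\phi_z(c) = \tfrac12(E(z,c) + E^\#(z,c))$ and $\phi_z^\qd(c) = \tfrac{1}{2i}(E(z,c) - E^\#(z,c))$ with $E^\#$ also in the Cartwright class — the two functions $z\mapsto\phi_z(c)$ and $z\mapsto\phi_z^\qd(c)$ inherit membership in the Cartwright class (which is closed under the relevant linear operations and, in particular, under passing from $E$ to its real and imaginary parts on $\R$).

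It remains to treat $z \mapsto \theta_z(c)$ and $z\mapsto\theta_z^\qd(c)$. Here I would use the constant-Wronskian relation $W(\theta_z,\phi_z) = 1$, i.e.\ $\theta_z(c)\phi_z^\qd(c) - \theta_z^\qd(c)\phi_z(c) = 1$, together with a second evaluation at a nearby point $c' \in (a,b)$; solving the resulting linear system expresses $\theta_z(c)$ and $\theta_z^\qd(c)$ as entire combinations of $\phi_z(c)$, $\phi_z^\qd(c)$, $\phi_z(c')$, $\phi_z^\qd(c')$ and reciprocals of Wronskian-type quantities that are bounded below — alternatively, and more cleanly, one observes that $\theta_z$ at $c$ can be recovered by transporting the normalized initial data at $a$ (Theorem \ref{thmLC}) forward using the transfer matrix of \eqref{eqn:system}, whose entries are entire in $z$ of the same Cartwright growth. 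Either way, the four functions in \eqref{eqnLCef} are finite linear combinations, with coefficients independent of $z$, of Cartwright-class functions, hence are themselves in the Cartwright class. Finally I would note independence of the point $c$: by \eqref{eqn:system} the value at any other $c'$ differs by applying an entire transfer matrix of the same growth class, so the conclusion for one $c$ propagates to all $c \in (a,b)$.

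\textbf{Main obstacle.} The delicate point is establishing the logarithmic-integral bound on the \emph{real} axis for $E(\,\cdot\,,c)$ in the genuinely singular l.c.\ case at $a$: one must control $\|\phi_\lambda\|_{L^2((a,c);r\,dx)}$ for real $\lambda \to \pm\infty$ uniformly enough to get a polynomial (or at worst sub-exponential with finite logarithmic integral) bound, without the luxury of regularity at $a$. The resolution is that the l.c.\ assumption forces $\phi_z$ to be, near $a$, a fixed $z$-independent linear combination pattern controlled by a single $L^2$ solution, so the $a$-endpoint contributes only a bounded factor, and the polynomial growth in $\lambda$ comes entirely from propagating over the compact subinterval $[a+\epsilon, c]$ via Gronwall; this is exactly the content of the Krein-type result \cite{Kr52} alluded to before the lemma, and the cited estimates in \cite{EGNT12a} supply the quantitative input.
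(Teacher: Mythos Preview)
Your argument has a genuine gap at its very first step. You invoke \cite[Theorem\ 2.7]{EGNT12a} to claim growth order at most $1/2$, but that estimate applies to solutions normalized by initial data at an interior (or regular) point. The solutions $\theta_z$, $\phi_z$ of Theorem \ref{thmLC} are \emph{not} of this type: they are fixed by Wronskian conditions at the singular endpoint $a$, so if you write $\phi_z = A(z)u_{1,z} + B(z)u_{2,z}$ with $u_{1,z},u_{2,z}$ normalized at some $c_0\in(a,b)$, the coefficients $A(z),B(z)$ are entire functions whose growth is precisely what you need to control --- and nothing in your proposal controls it. This is exactly why the lemma is stated separately from the regular case (cf.\ the paragraph preceding it). The same issue undermines your treatment of $\theta_z$: there is no ``normalized initial data at $a$'' to transport, and the linear system coming from the Wronskian at two points $c,c'$ has determinant $\phi_z(c)\phi_z^\qd(c')-\phi_z^\qd(c)\phi_z(c')$, an entire function with zeros, so inverting it does not yield entire expressions.

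Your attempt at the logarithmic integral is also incomplete: the Gronwall bound on $[a+\varepsilon,c]$ gives polynomial control only \emph{after} you know $|\phi_\lambda(a+\varepsilon)|+|\phi_\lambda^\qd(a+\varepsilon)|$ grows at most polynomially in $\lambda$, which is again the unproved input. The ``$z$-independent linear combination pattern'' you invoke near $a$ does not exist in any literal sense --- the boundary condition at $a$ is a single linear constraint, and the remaining degree of freedom depends on $z$ in a way you have not bounded.

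The paper's proof avoids all of this by a completely different mechanism. It does \emph{not} attempt to verify finite exponential type and the logarithmic integral separately. Instead, it observes that the quotients $\phi_z(c)/\theta_z(c)$, $\phi_z^\qd(c)/\phi_z(c)$, $\theta_z^\qd(c)/\theta_z(c)$ are (up to sign) Nevanlinna--Herglotz functions --- this is where the special normalizations of Theorem \ref{thmLC} enter --- and hence satisfy the universal bound $C(1+|z|^2)/|\im z|$ off the real axis. Multiplying these gives a bound $|\theta_z(c)|^{-2} \leq K\exp\bigl(M(1+|z|^{1/2})/|\im z|^{1/4}\bigr)$, and then Matsaev's theorem \cite[Theorem\ 26.4.4]{Le96} delivers the Cartwright class for $z\mapsto\theta_z(c)$ in one stroke, yielding both the exponential type and the logarithmic integral simultaneously. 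The other three functions are handled by the same device.
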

 \begin{proof}
  First of all one notes that for each fixed $c\in(a,b)$,
  \begin{align}
   \frac{1}{\theta_z(c)^2} = \frac{\phi_z(c)}{\theta_z(c)} \left(\frac{\phi_z^\qd(c)}{\phi_z(c)} 
   - \frac{\theta_z^\qd(c)}{\theta_z(c)}\right), \quad z\in\C\backslash\R.
  \end{align} 
  Since all fractions on the right-hand side are Nevanlinna--Herglotz functions (up to a sign), they 
  are bounded by 
  \begin{align}
   C \frac{1+|z|^2}{|\im(z)|} \leq K \exp\left(M\frac{1+\sqrt{|z|}}{\sqrt[4]{|\im(z)|}}\right), 
   \quad z\in\C\backslash\R,
  \end{align}
  for some constants $C$, $K$, $M\in (0,\infty)$. Consequently, one has 
  \begin{align}
   \left|\frac{1}{\theta_z(c)^2}\right| \leq 2 K^2 \exp\left(2 M\frac{1+\sqrt{|z|}}{\sqrt[4]{|\im(z)|}}\right), \quad z\in\C\backslash\R,
  \end{align}
and hence a theorem by Matsaev \cite[Theorem\ 26.4.4]{Le96} implies that the first function in 
\eqref{eqnLCef} belongs to the Cartwright class. The claim for the remaining functions is proved similarly. 
 \end{proof}
  
 \begin{remark} 
 We note in passing that Lemma \ref{lemLC} permits certain conclusions on the eigenvalue distribution of particular Sturm--Liouville operators. 
 In fact, if $\tau$ is in the l.c.\ case at both endpoints, then the eigenvalues of $S$ are the zeros of an entire function which belongs to the Cartwright class. 
 Consequently, the eigenvalues have convergence exponent at most one, that is, 
 \begin{align}
   \inf\biggr\lbrace \omega\geq0 \,\bigg|\, \sum_{\lambda\in \sigma(S)} \frac{1}{1+|\lambda|^\omega}
   < \infty \biggr\rbrace \leq 1. 
 \end{align}
 Moreover, if $S$ is semi-bounded from above or from below, then one even has
 \begin{align}
  \sum_{\lambda\in\sigma(S)} \frac{1}{1+|\lambda|} < \infty,
 \end{align}
 in view of \cite[Theorem\ 17.2.1]{Le96}. 
 \end{remark}  
   
 As a consequence of Lemma \ref{lemLC}, the de Branges functions associated with a real entire 
 solution as in Theorem \ref{thmLC} belong to the Cartwright class as well. 
 Thus, as noted in the remark after Theorem \ref{thmdBuniqS}, they are of bounded type in the open upper and lower complex half-plane.
 In particular, the quotient \eqref{eqnquotE1E2} will be of bounded type provided $\phi_{1,z}$ and 
 $\phi_{2,z}$ are real entire solutions as in Theorem \ref{thmLC}. 
 We are now able to deduce a two-spectra inverse uniqueness result from Theorem \ref{thmdBuniqS}. 
 Therefore, let $\tau_j$, $j=1,2$, again be two Sturm--Liouville differential expressions of the 
 form \eqref{2.2}, both satisfying the assumptions made in Hypothesis \ref{h2.1}. 
 
 \begin{theorem}\label{thmTS}
Suppose that $\tau_j$ are in the l.c.\ case at $a_j$ and that $S_{j}$, $T_{j}$ are two distinct self-adjoint realizations of $\tau_j$ with discrete spectra and the same boundary condition at $b_j$ 
$($if any$)$, $j=1, 2$. If 
  \begin{align}\label{eqnTSequal}
  \sigma(S_{1}) = \sigma(S_{2}) \quad\text{and}\quad \sigma(T_{1}) = \sigma(T_{2}),
\end{align}
then there is a locally absolutely continuous bijection $\eta$ from $(a_1,b_1)$ onto $(a_2,b_2)$, 
and locally absolutely continuous, real-valued functions $\kappa$, $\nu$ on $(a_1,b_1)$ such that 
  \begin{align}
  \begin{split} 
 \eta' r_2\circ\eta & = \kappa^{2} r_1, \\
  p_2\circ\eta & = \eta' \kappa^{2} p_1, \\
  \eta' s_2\circ\eta & = s_1 + \kappa^{-1}(\kappa' - \nu p_1^{-1}), \\
 \eta' q_2\circ\eta & = \kappa^2 q_1 + 2\kappa\nu s_1 - \nu^2 p_1^{-1} + \kappa'\nu - \kappa\nu'.
 \end{split} 
\end{align}
 Moreover, the map $V$ given by 
 \begin{align}\label{eqnLiouvilleT}
 V: \begin{cases}
 L^2((a_2,b_2);r_2(x)dx) \to L^2((a_1,b_1);r_1(x)dx), \\
 f_2 \mapsto (V f_2)(x_1) = \kappa(x_1) f_2(\eta(x_1)), \quad x_1\in(a_1,b_1),
  \end{cases}
 \end{align}
is unitary, with  
\begin{equation} 
S_{1} = V S_2 V^{-1}  \quad\text{and}\quad T_1 = V T_2 V^{-1}.   \lb{4.16}
\end{equation} 
\end{theorem}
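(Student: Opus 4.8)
The plan is to reduce the two-spectra hypothesis \eqref{eqnTSequal} to the equality of spectral measures, and then invoke Theorem \ref{thmdBuniqS}. First I would fix, for each $j=1,2$, a real entire fundamental system $\theta_{j,z}$, $\phi_{j,z}$ as in Theorem \ref{thmLC}, with $\phi_{j,z}$ lying in $\dom{S_j}$ near $a_j$ and satisfying the boundary condition at $b_j$ (if any) that is shared by $S_j$ and $T_j$; let $m_{S_j}$, $m_{T_j}$ be the associated singular Weyl--Titchmarsh--Kodaira functions. By Lemma \ref{lemLC} the de Branges functions $E_j(\,\cdot\,,x_j)$ built from $\phi_{j,z}$ belong to the Cartwright class, hence are of bounded type in $\C_\pm$, so the quotient \eqref{eqnquotE1E2} is automatically of bounded type and the main hypothesis of Theorem \ref{thmdBuniqS} will be met once we know $\mu_1 = \mu_2$.

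The key step is therefore to show $\mu_1 = \mu_2$ from \eqref{eqnTSequal}. Since $S_j$, $T_j$ differ only by the boundary condition at the left endpoint $a_j$, and $\phi_{j,z}$ is fixed to satisfy the $S_j$-condition at $a_j$, the function $\theta_{j,z}$ (normalized as in Theorem \ref{thmLC}) can be arranged to satisfy the $T_j$-boundary condition at $a_j$; consequently $m_{T_j} = -1/m_{S_j}$ after a possible relabeling, i.e.\ $m_{S_j}$ and $m_{T_j}$ are, up to the usual constant ambiguity, reciprocal Nevanlinna--Herglotz functions. Both $m_{S_j}$ and $m_{T_j}$ are meromorphic of Cartwright class (their poles are the respective eigenvalue sets, which are discrete), so each is determined, up to the normalization ambiguities recorded in \eqref{eqnLCnewM}, by its set of poles together with its set of zeros — and the zeros of $m_{S_j}$ are precisely the poles of $m_{T_j}$, i.e.\ $\sigma(T_j)$. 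Thus $\sigma(S_j)$ and $\sigma(T_j)$ together pin down $m_{S_j}$ up to the transformation \eqref{eqnLCnewM}. Using \eqref{eqnTSequal}, it follows that $m_{S_1}$ and $m_{S_2}$ agree up to $\widetilde m(z) = \E^{-2g_0} m(z) + \E^{-g_0} f_0$ for some $f_0 \in \R$, $g_0$ real mod $i\pi$; after replacing the fundamental system for $S_2$ by the one in \eqref{eqnLCnewFS} with these parameters, we get $m_{S_1} = m_{S_2}$ outright, and then the Herglotz representation \eqref{eqnSThergMrep} forces $\mu_1 = \mu_2$ (note $\re(m(i))$ is fixed once the $f_0$ ambiguity is removed).

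With $\mu_1 = \mu_2$ in hand, Theorem \ref{thmdBuniqS} applies directly: it produces the locally absolutely continuous bijection $\eta$ and the locally absolutely continuous real-valued functions $\kappa$, $\nu$ satisfying the four displayed coefficient relations, and the unitary $V$ of the stated form with $S_1 = V S_2 V^{-1}$. It remains only to verify the extra conclusion $T_1 = V T_2 V^{-1}$ in \eqref{4.16}. For this I would observe that $V = \cF_1^{-1}\cF_2$ is the same spectral-transform intertwiner for both pairs: since $\phi_{1,z}$ and $\phi_{2,z}$ were chosen to have matching ($S_j$-)boundary behavior at the left endpoint, equation \eqref{eqndBdet} shows $V$ is implemented by the pointwise rescaling-and-reparametrization $(Vf_2)(x_1) = \kappa(x_1) f_2(\eta(x_1))$ independently of which self-adjoint realization one uses; and $V$ maps $\dom{\Tmax^{(2)}}$ onto $\dom{\Tmax^{(1)}}$ intertwining the maximal operators. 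The boundary condition at $a_j$ defining $T_j$ is $W(g,\ol{v_j})(a_j)=0$ for a fixed $v_j \in \dom{\Tmax^{(j)}}$; the relations \eqref{eqndBdet}–\eqref{3.15} show that $V$ transports the $a_2$-boundary form to the $a_1$-boundary form (Wronskians are preserved up to the nonvanishing factor $\kappa$), so $Vf_2 \in \dom{T_1}$ iff $f_2 \in \dom{T_2}$, giving $T_1 = V T_2 V^{-1}$.

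The main obstacle I anticipate is the bookkeeping in the step $\sigma(S_j),\sigma(T_j) \Rightarrow m_{S_j}$ up to \eqref{eqnLCnewM}: one must argue carefully that a Cartwright-class Nevanlinna--Herglotz function is determined by its poles and zeros up to exactly the two-parameter family in \eqref{eqnLCnewM} — this uses the Hadamard/Cartwright factorization (genus-one product over poles and zeros, with the linear term controlled by the Herglotz property) — and one must track how the normalizations of $\theta_{j,z}$ (the constants $f_0$, $g_0$) interact for the two pairs so that a single choice reconciles both $m_{S_j}$ and $m_{T_j}$ simultaneously. Everything downstream of $\mu_1=\mu_2$ is then essentially an application of Theorem \ref{thmdBuniqS} plus the observation that the intertwiner does not see the left boundary condition.
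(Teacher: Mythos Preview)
Your overall strategy coincides with the paper's: normalize fundamental systems via Theorem~\ref{thmLC}, use the two spectra to identify the $m$-functions, and then invoke Theorem~\ref{thmdBuniqS}. A few points need tightening.

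First, a slip: $\phi_{j,z}$ lies in $\dom{S_j}$ near $a_j$; it cannot also satisfy the boundary condition at $b_j$ (that is the role of the Weyl solution $\psi_{j,z}$).

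Second, once you have arranged $\theta_{j,z}$ to satisfy the $T_j$-condition at $a_j$, the zeros of $m_{S_j}$ are already fixed as $\sigma(T_j)$. Saying that $m_{S_1}$ and $m_{S_2}$ then agree ``up to \eqref{eqnLCnewM}'' is too loose: an additive shift $f_0$ would move the zeros. The precise statement needed here is Krein's theorem \cite[Theorem~27.2.1]{Le96}: two meromorphic Nevanlinna--Herglotz functions that are quotients of Cartwright-class entire functions and share the same zeros and poles differ only by a positive multiplicative constant. Thus $m_{S_1}=C^2 m_{S_2}$, and rescaling $\phi_{1,z}\mapsto C\phi_{1,z}$ (and $\theta_{1,z}\mapsto C^{-1}\theta_{1,z}$, which still satisfies the $T_1$-condition) gives $m_{S_1}=m_{S_2}$.

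The genuine gap is in your last step. You argue that $V$ preserves Wronskians and hence ``transports the $a_2$-boundary form to the $a_1$-boundary form,'' concluding $T_1=VT_2V^{-1}$. It is true that the transfer matrix in \eqref{eqndBdet} has determinant one, so $V$ carries the $T_2$-boundary condition at $a_2$ to \emph{some} self-adjoint boundary condition at $a_1$; but nothing you wrote forces that image condition to be the $T_1$-condition. Concretely, $\kappa\,\theta_{2,z}\circ\eta$ is a real entire solution of $(\tau_1-z)u=0$ satisfying the Wronskian relations of Theorem~\ref{thmLC} with $\phi_{1,z}$, so by \eqref{eqnLCnewFS} it equals $\theta_{1,z}-f_0\phi_{1,z}$ for some constant $f_0\in\R$; if $f_0\neq 0$ then $VT_2V^{-1}$ is a self-adjoint realization with a boundary condition at $a_1$ different from that of $T_1$. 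The paper closes this gap by using the \emph{right} endpoint: for $\lambda\in\sigma(T_2)=\sigma(T_1)$, the function $\kappa\,\theta_{2,\lambda}\circ\eta$ lies in $L^2$ near $b_1$ and satisfies the $b_1$-boundary condition (which $S_1$ and $T_1$ share). Since $\theta_{1,\lambda}$ does so as well (being the $T_1$-eigenfunction), subtraction shows $f_0\phi_{1,\lambda}$ satisfies the $b_1$-condition; if $f_0\neq 0$ this would force $\lambda\in\sigma(S_1)$, contradicting the disjointness of $\sigma(S_1)$ and $\sigma(T_1)$. Your sketch needs this (or an equivalent) argument; the claim that ``the intertwiner does not see the left boundary condition'' is precisely what fails without it.
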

 \begin{proof}
Let $\theta_{j,z}$, $\phi_{j,z}$ be real entire fundamental systems as in Theorem \ref{thmLC} 
associated with the operators $S_{j}$, $j=1, 2$. In particular, the set of poles of the corresponding singular Weyl--Titchmarsh--Kodaira function $m_j$ is precisely the spectrum of $S_{j}$, $j=1,2$. 
 Furthermore, note that the value $h_j = m_j(\lambda)$ is independent of $\lambda\in\sigma(T_j)$ 
and conversely, each $\lambda\in\C$ for which $h_j = m_j(\lambda)$  is an eigenvalue of $T_j$. 
  Hence, one may assume (upon replacing $\theta_{j,z}$ by $\theta_{j,z} + h_j\phi_{j,z}$, cf.\ 
  \eqref{eqnLCnewM}) that the set of zeros of $m_j$ is precisely the 
  spectrum of $T_{j}$.  As a consequence of our assumptions \eqref{eqnTSequal} and a theorem of 
  Krein \cite[Theorem\ 27.2.1]{Le96}, one obtains that $m_1=C^2 m_2$ for some positive constant $C$ and hence, upon replacing $\phi_{1,z}$ with $C \phi_{1,z}$ (cf.\ \eqref{eqnLCnewM}), one can assume that 
  $m_1 = m_2$. Thus we also have $\mu_1= \mu_2$ and hence, except for the very last part in 
  \eqref{eqnLiouvilleT} and \eqref{4.16}, the claim 
  follows from Theorem \ref{thmdBuniqS} in view of Lemma \ref{lemLC}. 
  In order to show that $T_2$ is mapped into $T_1$ via $V$, it suffices to verify that 
  \begin{align}\label{eqntheta}
   \theta_{1,z}(x_1) = \kappa(x_1) \theta_{2,z}(\eta(x_1)), \quad x_1\in(a_1,b_1), \; z\in\C.
  \end{align}
It is a straightforward calculation to check that the right-hand side in this equation is a real entire 
solution of $(\tau_1-z)u=0$. Moreover, since the transformation \eqref{eqnLiouvilleT} leaves the 
Wronskian of two functions invariant, this solution satisfies the properties in Theorem \ref{thmLC} 
(with $\phi_{1,z}$ as the second real entire solution). Consequently, one concludes that 
$\theta_{1,z} = \kappa\, \theta_{2,z}\circ\eta - f_0 \phi_{1,z}$ for some $f_0\in\R$. However, for each eigenvalue $\lambda\in\sigma(T_2)$, the solution $\kappa\, \theta_{2,\lambda}\circ\eta$ lies in $L^2((a_1,b_1);r_1(x)dx)$ near $b_1$ and satisfies the boundary condition of $T_1$ at $b_1$ (if any), which 
is the same as that of $S_1$ by the first part of the proof. This concludes the proof since it allows one 
to infer that $f_0=0$.   
 \end{proof}

 \begin{remark}
 At this point, we comment on how a Liouville transform affects the boundary condition at a regular endpoint. 
 Therefore, in addition to the assumptions of Theorem \ref{thmTS}, we now suppose that $\tau_1$ and 
 $\tau_2$ are regular at the left endpoint and that the boundary condition of $S_1$ there is given by 
 \begin{align}
  f(a_1) \cos(\varphi_{a,1}) - f^\qd(a_1) \sin(\varphi_{a,1}) = 0 
 \end{align}
 for some $\varphi_{a,1}\in[0,\pi)$. Then a direct calculation, using, for example, \eqref{eqndBdet}, shows that the boundary condition of $S_2$ at the left endpoint is given by
 \begin{align}
  f(a_2) \left(\kappa(a_1) \cos(\varphi_{a,1}) - \nu(a_1) \sin(\varphi_{a,1})\right) - f^\qd(a_2) \kappa(a_1)^{-1} \sin(\varphi_{a,1}) = 0.
 \end{align}
In this context one notes that the limits of $\kappa(x_1)$, $\nu(x_1)$ and $\kappa(x_1)^{-1}$ as $x_1\rightarrow a_1$ exist, which can be verified upon considering \eqref{eqndBdet} and \eqref{eqntheta}. 
 \end{remark}

   Uniqueness results in inverse spectral theory in connection with two discrete spectra are intimately connected with classical results for Schr\"odinger operators due to Borg, Levinson, Levitan, and 
Marchenko. Within the limited scope of this paper it 
is impossible to provide an exhaustive list of all pertinent references. Hence, we can only limit ourselves listing a few classical results, such as, \cite{Bo46}, \cite{Bo52}, \cite{Le49}, \cite{Le68}, 
\cite[Chapter 3]{Le87}, \cite{LG64}, \cite{Ma73}, \cite[Section 3.4]{Ma11}, and some of the more recent ones in connection with distributional potential coefficients, such as, \cite{AHM08}, \cite{Hr11}, \cite{HM03}, 
\cite{HM04aa}, \cite{HM06}, \cite{HM06a}, \cite{SS05}--\cite{SS10} (some of these references also derive reconstruction algorithms for the potential term in Schr\"odinger operators); the interested reader will find many more references in these sources.
 
 As in Section \ref{s3} (cf.\ Corollary \ref{corInvUniqSchr}), our inverse uniqueness result simplifies 
 in the case of Schr\"odinger operators with a distributional potential.

\begin{corollary}
Suppose that $p_j = r_j = 1$, that $\tau_j$ are in the l.c.\ case at $a_j$, and that $S_{j}$, $T_{j}$ 
are two distinct self-adjoint realizations of $\tau_j$ with discrete spectra and the same boundary condition 
at $b_j$ $($if any$)$, $j=1,2$. If 
  \begin{align}
  \sigma(S_{1}) = \sigma(S_{2}) \quad\text{and}\quad \sigma(T_{1}) = \sigma(T_{2}),
  \end{align}
  then there is some $\eta_0\in\R$ and a locally absolutely continuous, real-valued function $\nu$ on $(a_1,b_1)$ such that 
  \begin{align}
  \begin{split}
  s_2(\eta_0 +x_1) & = s_1(x_1) + \nu(x_1), \\
  q_2(\eta_0 +x_1) & = q_1(x_1) -2\nu(x_1)s_1(x_1) - \nu(x_1)^2 +\nu'(x_1),
  \end{split} 
\end{align}
 for almost all $x_1\in(a_1,b_1)$.  Moreover, the map $V$ given by 
 \begin{align}
V:  \begin{cases} 
 L^2((a_2,b_2);dx)\rightarrow L^2((a_1,b_1);dx),  \\
 f_2 \mapsto (V f_2)(x_1) = f_2(\eta_0 + x_1), \quad x_1\in(a_1,b_1), 
 \end{cases} 
 \end{align}
is unitary, with 
\begin{equation} 
S_{1} = V S_2 V^{-1}  \quad\text{and}\quad T_1 = V T_2 V^{-1}.
\end{equation} 
\end{corollary}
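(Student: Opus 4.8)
The plan is to deduce this corollary from Theorem~\ref{thmTS} by specializing the coefficients to $p_j=r_j=1$, in exact parallel with the way Corollary~\ref{corInvUniqSchr} is deduced from Theorem~\ref{thmdBuniqS}. First I would verify that the hypotheses of Theorem~\ref{thmTS} are met: $\tau_j$ is in the l.c.\ case at $a_j$, $S_j$ and $T_j$ are distinct self-adjoint realizations with discrete spectra sharing the same boundary condition at $b_j$ (if any), and $\sigma(S_1)=\sigma(S_2)$, $\sigma(T_1)=\sigma(T_2)$. Theorem~\ref{thmTS} then furnishes a locally absolutely continuous bijection $\eta\colon(a_1,b_1)\to(a_2,b_2)$, locally absolutely continuous real-valued functions $\kappa$, $\nu$ on $(a_1,b_1)$ satisfying the four Liouville relations of that theorem, and the unitary $V\colon f_2\mapsto\kappa\,(f_2\circ\eta)$ with $S_1=VS_2V^{-1}$ and $T_1=VT_2V^{-1}$.

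Next I would extract the rigidity forced by $p_j=r_j=1$. From $\eta'\,r_2\circ\eta=\kappa^2 r_1$ one gets $\eta'=\kappa^2$ a.e., and then $p_2\circ\eta=\eta'\kappa^2 p_1$ gives $1=\eta'\kappa^2=\kappa^4$, so $\kappa^2\equiv1$ and $\eta'\equiv1$ a.e. Since $\kappa$ is continuous and real-valued with $\kappa^2\equiv1$, it must be either the constant $+1$ or the constant $-1$; in particular $\kappa'=0$ a.e.\ and $\kappa^{-1}=\kappa$. Because $\eta$ is an increasing bijection with $\eta'\equiv1$, there is a constant $\eta_0\in\R$ with $\eta(x_1)=x_1+\eta_0$ for all $x_1\in(a_1,b_1)$ (and $a_2=a_1+\eta_0$, $b_2=b_1+\eta_0$).

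Substituting $p_1^{-1}=1$, $\kappa'=0$, $\kappa^2=1$ into the third and fourth Liouville relations and setting $\nu_0:=-\kappa\nu$ (again locally absolutely continuous and real-valued), the third relation becomes $s_2(\eta_0+x_1)=s_1(x_1)-\kappa\nu(x_1)=s_1(x_1)+\nu_0(x_1)$, while the fourth, using $\kappa\nu=-\nu_0$, $\nu^2=\nu_0^2$, and $(\kappa\nu)'=-\nu_0'$, becomes $q_2(\eta_0+x_1)=q_1(x_1)-2\nu_0(x_1)s_1(x_1)-\nu_0(x_1)^2+\nu_0'(x_1)$ for almost all $x_1$; renaming $\nu_0$ as $\nu$ gives the asserted identities. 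Finally, the operator from Theorem~\ref{thmTS} is $(Vf_2)(x_1)=\kappa f_2(\eta_0+x_1)$ with $\kappa=\pm1$ a fixed scalar, so replacing $V$ by $\kappa V$---still unitary, and leaving the conjugations $VS_2V^{-1}$, $VT_2V^{-1}$ unchanged---produces exactly the stated translation operator $f_2\mapsto f_2(\eta_0+\cdot\,)$ intertwining $S_1$ with $S_2$ and $T_1$ with $T_2$.

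I do not anticipate a genuine obstacle here: all of the analytic content (the de Branges subspace comparison, the Krein two-spectra theorem, and the Cartwright-class bounds of Lemma~\ref{lemLC} needed to make the bounded-type hypothesis automatic in the l.c.\ case) is already packaged into Theorem~\ref{thmTS}, and what remains is purely algebraic. The only points demanding care are the bookkeeping that $\kappa$ is literally a constant equal to $\pm1$---so that $\eta$ is an honest shift and $V$ an honest translation up to an overall sign---and the sign tracking in the substitution $\nu_0=-\kappa\nu$; one should also state explicitly that it is the l.c.\ assumption at $a_j$ which licenses invoking Theorem~\ref{thmTS} at all.
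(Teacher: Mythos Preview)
Your proposal is correct and follows exactly the approach the paper intends: the corollary is stated without proof, prefaced only by the remark that the two-spectra result ``simplifies in the case of Schr\"odinger operators with a distributional potential'' as in Corollary~\ref{corInvUniqSchr}, and your derivation---applying Theorem~\ref{thmTS}, deducing $\kappa^4=1$ and $\eta'=1$ from $p_j=r_j=1$, and then renaming $-\kappa\nu$ as the new $\nu$---is precisely that simplification spelled out.
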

 
We conclude this section by stating the following result for Sturm--Liouville operators in impedance form as in Section \ref{s3} (cf.\ Corollary \ref{corInvUniqSchrImpedance}). 

\begin{corollary}\label{corTSimped}
Suppose that $q_j = s_j = 0$, $p_j=r_j$, that $\tau_j$ are in the l.c.\ case at $a_j$, and that 
$S_{j}$, $T_{j}$ are two distinct self-adjoint realizations of $\tau_j$ with discrete spectra and the same boundary condition at $b_j$  $($if any$)$, $j=1, 2$. If 
\begin{align}
\sigma(S_{1}) = \sigma(S_{2}) \quad\text{and}\quad \sigma(T_{1}) = \sigma(T_{2}),
\end{align}
then there is a $c_1\in(a_1,b_1)$ and constants $\eta_0$, $\nu_0$, $\kappa_0\in\R$ such that 
\begin{align}
p_2(\eta_0 +x_1) & = p_1(x_1) \left(\nu_0 \int_{c_1}^{x_1} \frac{dt}{p_1(t)} + \kappa_0\right)^2 
\end{align}
for almost all $x_1\in(a_1,b_1)$. Moreover, the map $V$ given by  
\begin{align}
V: \begin{cases} 
L^2((a_2,b_2);r_2(x)dx) \to L^2((a_1,b_1);r_1(x)dx),   \\
f_2 \mapsto (V f_2)(x_1) = \left(\nu_0 \int_{c_1}^{x_1} \frac{dt}{p_1(t)} + \kappa_0\right) f_2(\eta_0 + x_1), 
\quad x_1\in(a_1,b_1),
\end{cases} 
\end{align}
is unitary, with 
\begin{equation} 
S_1 = V S_2 V^{-1} \quad\text{and}\quad T_1 = V T_2 V^{-1}.
\end{equation} 
\end{corollary}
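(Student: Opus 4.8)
The plan is to deduce Corollary \ref{corTSimped} as a direct specialization of Theorem \ref{thmTS}. First I would verify that the hypotheses of Theorem \ref{thmTS} are met: the $\tau_j$ satisfy Hypothesis \ref{h2.1} (with $p_j=r_j$ and $q_j=s_j=0$), they are in the l.c.\ case at $a_j$, and $S_j$, $T_j$ are distinct self-adjoint realizations with discrete spectra sharing a common boundary condition at $b_j$. Since also $\sigma(S_1)=\sigma(S_2)$ and $\sigma(T_1)=\sigma(T_2)$, Theorem \ref{thmTS} applies and yields a locally absolutely continuous bijection $\eta:(a_1,b_1)\to(a_2,b_2)$ together with locally absolutely continuous real-valued functions $\kappa$, $\nu$ on $(a_1,b_1)$ satisfying the four relations in \eqref{3.15}, with the associated Liouville transform $V f_2 = \kappa\,(f_2\circ\eta)$ being unitary and intertwining $S_1=VS_2V^{-1}$, $T_1=VT_2V^{-1}$.

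The second step is to specialize the four relations using $q_j=s_j=0$, $p_j=r_j$. From the first relation $\eta' r_2\circ\eta = \kappa^2 r_1$ and the second $p_2\circ\eta = \eta'\kappa^2 p_1$, together with $p_j=r_j$, one immediately gets $\eta' = 1$ (dividing the two relations), hence $\eta(x_1)=\eta_0+x_1$ for some constant $\eta_0\in\R$. The third relation $\eta' s_2\circ\eta = s_1 + \kappa^{-1}(\kappa' - \nu p_1^{-1})$ with $s_1=s_2=0$ and $\eta'=1$ reduces to $\kappa' = \nu\,p_1^{-1}$, i.e.\ $\kappa'(x_1)=\nu(x_1)/p_1(x_1)$. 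The fourth relation $\eta' q_2\circ\eta = \kappa^2 q_1 + 2\kappa\nu s_1 - \nu^2 p_1^{-1} + \kappa'\nu - \kappa\nu'$ with $q_1=q_2=s_1=0$ becomes $0 = -\nu^2 p_1^{-1} + \kappa'\nu - \kappa\nu'$; substituting $\kappa'=\nu p_1^{-1}$ kills the first two terms against each other and leaves $\kappa\nu'=0$. Since $\kappa$ never vanishes (cf.\ Remark \ref{r3.7}), this forces $\nu'=0$, so $\nu\equiv\nu_0$ is constant.

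With $\nu$ constant, $\kappa'=\nu_0/p_1$ integrates to $\kappa(x_1)=\nu_0\int_{c_1}^{x_1} dt/p_1(t)+\kappa_0$ for any fixed reference point $c_1\in(a_1,b_1)$ and the constant $\kappa_0=\kappa(c_1)\in\R$. Feeding this back into the first relation $\kappa^2 r_1 = \eta' r_2\circ\eta = r_2\circ\eta$ (using $\eta'=1$) and then $p_j=r_j$ gives exactly $p_2(\eta_0+x_1)=p_1(x_1)\big(\nu_0\int_{c_1}^{x_1} dt/p_1(t)+\kappa_0\big)^2$ for a.e.\ $x_1\in(a_1,b_1)$, which is the claimed formula; the stated form of $V$ is just $V f_2 = \kappa\,(f_2\circ\eta)$ with these explicit $\kappa$ and $\eta$, and the intertwining relations $S_1=VS_2V^{-1}$, $T_1=VT_2V^{-1}$ are inherited verbatim from Theorem \ref{thmTS}. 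The only point requiring a moment of care — and the place I would be most careful about — is the use of $\kappa\neq 0$ to conclude $\nu'=0$ from $\kappa\nu'=0$; this is exactly the nonvanishing built into the Liouville transform in Theorem \ref{thmdBuniqS}/Theorem \ref{thmTS} (the function $\kappa$ there is real-valued and $\kappa^{-1}$ is also locally absolutely continuous, hence $\kappa$ has no zeros), so no separate argument is needed, but it is worth recording explicitly in the write-up. Everything else is elementary algebra with the four relations.
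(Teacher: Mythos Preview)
Your proposal is correct and follows exactly the approach the paper intends: the corollary is stated without proof, merely pointing to the analogous Corollary \ref{corInvUniqSchrImpedance}, and your explicit specialization of the four relations in Theorem \ref{thmTS} is precisely how one fills in the details. One tiny cosmetic point: dividing the first two relations literally gives $(\eta')^2=1$, and you then need the monotonicity of $\eta$ (built into Theorem \ref{thmTS}) to select $\eta'=1$; everything else is exactly right.
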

 
 At first sight, this result might seem to be weaker than, for example, the uniqueness part of 
 \cite[Theorem 7.1]{AHM05}, where the impedance function is determined by two spectra up to a 
 scalar multiple. However, this is not the case since in \cite{AHM05} a particular choice of two spectra 
 with Dirichlet and Neumann boundary conditions is used. As already mentioned in the remark after 
 Corollary \ref{corInvUniqSchrImpedance}, this additional knowledge suffices to conclude that $\nu_0$ 
 is zero and hence the impedance functions are equal up to a scalar multiple.

 \section{Inverse Uniqueness Results in Terms of Three Discrete Spectra} \lb{s5} 

The findings of the preceding sections also allow us to deduce an inverse uniqueness result associated with three discrete spectra. For some relevant background references in this direction we refer to 
\cite{GS99}, \cite{HM03a}, \cite{Pi99}--\cite{Pi06}.

In order to state our result, once again, let $\tau_1$, $\tau_2$ be two Sturm--Liouville differential expressions of the form \eqref{2.2}, both satisfying the assumptions made in Hypothesis \ref{h2.1}.
 Let $S_j$ denote self-adjoint realizations of $\tau_j$, $j=1,2$, with separated boundary conditions and purely discrete spectra. Moreover, we fix some $c_j\in(a_j,b_j)$ and consider the self-adjoint restrictions $S_{a,j}$ and $S_{b,j}$ of $S_j$ to $L^2((a_j,c_j);r_j(x)dx)$ and $L^2((c_j,b_j);r_j(x)dx)$, respectively, with the separated boundary condition at $c_j$ given by 
 \begin{align}
  f(c_j)\cos(\varphi_{c,j}) - f^\qd(c_j) \sin(\varphi_{c,j}) = 0 
 \end{align}
 for some fixed $\varphi_{c,j}\in[0,\pi)$, $j=1,2$. 
In light of these preliminaries, we are now able to prove that the spectra of the three operators $S_j$, $S_{a,j}$, and $S_{b,j}$ determine the Sturm--Liouville differential expression up to a Liouville transform, provided the spectra are disjoint (see below). 
  
 \begin{theorem}\label{thmThS}
Suppose that $S_j$ have discrete spectra and that 
\begin{equation}
\sigma(S_j) \cap \sigma(S_{a,j}) \cap \sigma(S_{b,j}) = \emptyset, \quad j=1,2.   
\end{equation}
If  
  \begin{align}\label{eqnThSequal}
  \sigma(S_{1}) = \sigma(S_{2}), \quad \sigma(S_{a,1}) = \sigma(S_{a,2}), \quad \text{and} 
  \quad  \sigma(S_{b,1}) = \sigma(S_{b,2}),
  \end{align}
 then there is a locally absolutely continuous bijection $\eta$ from $(a_1,b_1)$ onto $(a_2,b_2)$, 
 with $\eta(c_1)=c_2$, and locally absolutely continuous, real-valued functions $\kappa$, $\nu$ on 
 $(a_1,b_1)$ such that 
  \begin{align}
  \begin{split} 
 \eta' r_2\circ\eta & = \kappa^{2} r_1,   \lb{5.3} \\
  p_2\circ\eta & = \eta' \kappa^{2} p_1, \\
  \eta' s_2\circ\eta & = s_1 + \kappa^{-1}(\kappa' - \nu p_1^{-1}), \\
 \eta' q_2\circ\eta & = \kappa^2 q_1 + 2\kappa\nu s_1 - \nu^2 p_1^{-1} + \kappa'\nu - \kappa\nu'.
 \end{split} 
\end{align}
 Moreover, the map $V$ given by 
 \begin{align}\label{eqnLiouvilleTh}
 V: \begin{cases} 
 L^2((a_2,b_2);r_2(x)dx) \to L^2((a_1,b_1);r_1(x)dx), \\
 f_2 \mapsto (V f_2)(x_1) = \kappa(x_1) f_2(\eta(x_1)), \quad x_1\in(a_1,b_1),
 \end{cases} 
 \end{align}
is unitary, with 
\begin{equation} 
S_1 = V S_2 V^{-1} \quad\text{and} \quad S_{a,1}\otimes S_{b,1} = V (S_{a,2}\otimes S_{b,2}) V^{-1}. 
\end{equation}
\end{theorem}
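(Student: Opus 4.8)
The plan is to reduce the three-spectra problem to the two-spectra result of Theorem~\ref{thmTS} by transplanting it to each of the two subintervals $(a_j,c_j)$ and $(c_j,b_j)$. The key observation is that the operators $S_{a,j}$ and $S_{b,j}$ are themselves self-adjoint Sturm--Liouville operators of the form \eqref{2.2} on the smaller intervals, with $\tau_j$ being in the l.c.\ (indeed regular, if we work with a fixed interior point) situation at the endpoint $c_j$ by virtue of the separated boundary condition there. The disjointness hypothesis $\sigma(S_j)\cap\sigma(S_{a,j})\cap\sigma(S_{b,j})=\emptyset$ will be used to encode the full spectrum $\sigma(S_j)$ as a ``second spectrum'' relative to one of the half-line problems: concretely, using the coupling/decoupling formalism for the singular Weyl--Titchmarsh function, the eigenvalues of $S_j$ are precisely the values of the argument where the Weyl $m$-functions $m_{a,j}$ (built from the left piece, with boundary condition at $c_j$) and $m_{b,j}$ (from the right piece) satisfy the matching condition $m_{a,j}(\lambda)=-m_{b,j}(\lambda)$ (or the analogous relation dictated by the boundary condition angle $\varphi_{c,j}$). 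So the three given spectra are equivalent to knowing $\sigma(S_{a,j})$, $\sigma(S_{b,j})$, and the zero set of $m_{a,j}+m_{b,j}$.

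First I would set up, on the left piece $(a_j,c_j)$, the real entire fundamental system $\theta_{a,j,z}$, $\phi_{a,j,z}$ of Theorem~\ref{thmLC} adapted to the boundary behavior at $a_j$ (legitimate since $\tau_j$ is l.c.\ at $a_j$), and let $m_{a,j}$ be the associated singular Weyl--Titchmarsh--Kodaira function for the operator with boundary condition $\varphi_{c,j}$ at $c_j$. Then $\sigma(S_{a,j})$ is the pole set of $m_{a,j}$. Next, on the right piece $(c_j,b_j)$ the point $c_j$ is a regular endpoint, so the classical Weyl function $m_{b,j}$ with the $\varphi_{c,j}$-boundary condition at $c_j$ (and the given boundary condition at $b_j$, if any) is a Nevanlinna--Herglotz function whose poles are $\sigma(S_{b,j})$. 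Gluing the two boundary conditions at $c_j$, the eigenvalues of $S_j$ are the $\lambda$ for which the two Weyl solutions match, i.e.\ where a certain linear combination $c_j$-Wronskian vanishes; in terms of $m$-functions this is exactly $m_{a,j}(\lambda)+m_{b,j}(\lambda)=0$. Thus from the three spectra and the disjointness assumption one reads off: (i) the pole set of $m_{a,j}$, (ii) the pole set of $m_{b,j}$, and (iii) the zero set of $m_{a,j}+m_{b,j}$. Disjointness guarantees that a pole of $m_{a,j}$ is never cancelled by a pole of $m_{b,j}$ and never coincides with a zero, so these three sets are genuinely independent data.

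The heart of the argument is then a Krein-type uniqueness statement: two meromorphic Herglotz-type functions $m_{a,j}$, together with Herglotz functions $m_{b,j}$, are determined up to the usual scalar normalization \eqref{eqnLCnewM} by the three sets in (i)--(iii). For the right piece $m_{b,j}$ is a genuine Herglotz function of a regular Sturm--Liouville problem; its poles $\sigma(S_{b,j})$ and the zero set of $m_{a,j}+m_{b,j}$ (which, modulo the poles of $m_{a,j}$, are the ``zeros'' seen through the coupling) pin it down by \cite[Theorem\ 27.2.1]{Le96} once the growth/Cartwright-class control from Lemma~\ref{lemLC} is invoked on both pieces. Hence $m_{a,1}=m_{a,2}$ and $m_{b,1}=m_{b,2}$ after the permissible rescalings, which in particular yields equality of the spectral measures $\mu_{a,1}=\mu_{a,2}$ for the left half-line problem, \emph{and} $\sigma(S_1)=\sigma(S_2)$ as a second spectrum of that problem (namely, the $m_{a,j}$-level set $\{m_{a,j}=-m_{b,j}(\lambda)\}$, now a known moving target since $m_{b,j}$ is known). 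Applying Theorem~\ref{thmTS} (with $S_{a,j}$ in the role of ``$S_j$'' and $S_j$ in the role of ``$T_j$'') on $(a_j,c_j)$ produces the Liouville data $\eta$, $\kappa$, $\nu$ on $(a_1,c_1)$ satisfying \eqref{5.3}, together with $\eta(c_1)=c_2$ because the $c_j$-endpoint normalizations match. Running the same machine on $(c_j,b_j)$ — where $c_j$ is regular with a fixed non-Dirichlet or Dirichlet condition, so the Liouville map is pinned down by its boundary values at $c_j$ as in the remark following Corollary~\ref{corInvUniqSchrImpedance} — extends $\eta$, $\kappa$, $\nu$ across $c_1$ to all of $(a_1,b_1)$; local absolute continuity and the relations \eqref{5.3} on each half patch together into the global statement. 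The induced unitary $V$ in \eqref{eqnLiouvilleTh} is then the direct sum of the two half-interval unitaries, which by construction intertwines $S_{a,2}\otimes S_{b,2}$ with $S_{a,1}\otimes S_{b,1}$; and since $V$ also intertwines the actions of $\tau_2$ and $\tau_1$ (it is built from the Liouville transform, cf.\ the discussion after Corollary~\ref{corInvUniqSchr}), it sends $S_2$ to $S_1$ as well, because $S_j$ is the unique self-adjoint extension compatible with the already-matched separated boundary conditions at $a_j$ and $b_j$.

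I expect the main obstacle to be step (iii): making rigorous the passage from ``the three spectra'' to the three independent pieces of $m$-function data, i.e.\ showing that the zero set of $m_{a,j}+m_{b,j}$ really is $\sigma(S_j)$ with correct multiplicities and no spurious cancellations, and then feeding this into a Krein-type theorem that tolerates the meromorphic (rather than entire) nature of $m_{a,j}$. The clean way around this is to clear denominators: multiply through by the canonical products over $\sigma(S_{a,j})$ to turn everything into entire functions of Cartwright class (legitimate by Lemma~\ref{lemLC}), at which point \cite[Theorem\ 27.2.1]{Le96} applies verbatim; the disjointness hypothesis is exactly what guarantees these canonical products do not interfere. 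A secondary technical point is verifying $\eta(c_1)=c_2$ and the matching of the Liouville data across $c_1$ — this follows because the de~Branges chain on $(a_1,b_1)$ restricted to $(a_1,c_1)$ is the de~Branges chain of the left problem, so the value $\eta(c_1)$ is forced by $\mathfrak{B}_{a,1}(c_1^-)=\mathfrak{B}_{a,2}(c_2^-)$, and the one-sided limits of $\kappa$, $\nu$ at $c_1$ exist and agree from both sides by the remark following Theorem~\ref{thmTS}.
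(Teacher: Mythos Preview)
Your high-level picture---cut at $c_j$, recover spectral data on each half, apply a half-interval uniqueness theorem, then patch---is exactly the paper's strategy. However, two of your key steps do not work as written.

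\textbf{(a) The Krein step.} You propose to determine $m_{a,j}$ and $m_{b,j}$ separately from the three spectra by ``pinning down $m_{b,j}$ from its poles $\sigma(S_{b,j})$ and the zero set of $m_{a,j}+m_{b,j}$'' via Krein's theorem. But Krein's theorem \cite[Theorem\ 27.2.1]{Le96} determines a meromorphic Herglotz function from \emph{its own} poles and zeros; the zeros of $m_{a,j}+m_{b,j}$ are not the zeros of $m_{b,j}$, and there is no way to disentangle the two summands from these data alone. The paper does not attempt this. Instead it forms the \emph{single} Nevanlinna--Herglotz function
\[
 N_j(z) = \frac{W_{a,j}(z)\,W_{b,j}(z)}{W_j(z)} = \frac{-1}{m_{a,j}(z)+m_{b,j}(z)},
\]
whose poles are $\sigma(S_j)$ and whose zeros are $\sigma(S_{a,j})\cup\sigma(S_{b,j})$, and applies Krein to $N_j$ to get $N_1 = C^2 N_2$. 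The disjointness hypothesis (which in fact forces $\sigma(S_{a,j})\cap\sigma(S_{b,j})=\emptyset$) then lets you read off the residues of $m_{b,j}$ at each $\lambda\in\sigma(S_{b,j})$ from $N_j$, since $m_{a,j}$ has no pole there. This gives $\mu_{b,1}=C^2\mu_{b,2}$ directly, without ever recovering $m_{a,j}$ or $m_{b,j}$ individually.

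\textbf{(b) The two-spectra step.} Your plan to invoke Theorem \ref{thmTS} on $(a_j,c_j)$ ``with $S_{a,j}$ in the role of $S_j$ and $S_j$ in the role of $T_j$'' cannot work: $S_j$ is not a self-adjoint realization of $\tau_j|_{(a_j,c_j)}$, and $\sigma(S_j)$ is \emph{not} the spectrum of an operator on $(a_j,c_j)$ obtained by changing the boundary condition at $a_j$. The level-set $\{\lambda:\,m_{a,j}(\lambda)=-m_{b,j}(\lambda)\}$ has a $\lambda$-dependent target and does not correspond to a fixed boundary condition, so the hypotheses of Theorem \ref{thmTS} are not met. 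The fix is simply to drop this detour: once you have $\mu_{b,1}=C^2\mu_{b,2}$ from step (a), rescale $\phi_{b,1,z}$ and apply Theorem \ref{thmdBuniqS} (not Theorem \ref{thmTS}) to $S_{b,1}$, $S_{b,2}$ on the right halves, and similarly on the left halves. This is what the paper does.

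A smaller point: you invoke Theorem \ref{thmLC} at $a_j$ ``since $\tau_j$ is l.c.\ at $a_j$'', but the hypotheses of Theorem \ref{thmThS} do not assume this---only that $S_j$ has discrete spectrum. The paper avoids this by using the regular interior point $c_j$ as the reference endpoint for both half-interval $m$-functions.
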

 \begin{proof}
  First of all, one notes that for $j=1, 2$, there are non-trivial, real entire solutions $\phi_{a,j,z}$ 
  and $\phi_{b,j,z}$ of $(\tau_j-z)u=0$ which lie in the domain of $S_j$ near $a_j$ and $b_j$, 
  respectively, $j=1, 2$. Then the spectra of the operators $S_j$,  $S_{a,j}$, and $S_{b,j}$ are the 
  zeros of the respective entire functions ($j=1,2$), 
  \begin{align}
  \begin{split} 
   W_j(z) & = \phi_{b,j,z}(c_j)\phi_{a,j,z}^\qd(c_j) - \phi_{b,j,z}^\qd(c_j) \phi_{a,j,z}(c_j), \\
   W_{a,j}(z) & = \phi_{a,j,z}(c_j)\cos(\varphi_{c,j}) - \phi_{a,j,z}^\qd(c_j)\sin(\varphi_{c,j}), \\
   W_{b,j}(z) & = \phi_{b,j,z}(c_j)\cos(\varphi_{c,j}) - \phi_{b,j,z}^\qd(c_j)\sin(\varphi_{c,j}).
   \end{split} 
  \end{align}
  Next, one introduces the auxiliary function
  \begin{align}
   N_j(z) & = \frac{W_{a,j}(z) W_{b,j}(z)}{W_j(z)} = \frac{-1}{m_{a,j}(z) + m_{b,j}(z)}, \quad z\in\C\backslash\R,
  \end{align}  
  where the meromorphic functions $m_{a,j}$ and $m_{b,j}$ are given by 
  \begin{align}
  \begin{split} 
   -m_{a,j}(z) W_{a,j}(z) & = \phi_{a,j,z}(c_j)\sin(\varphi_{c,j}) + \phi_{a,j,z}^\qd(c_j)\cos(\varphi_{c,j}), \\
   m_{b,j}(z) W_{b,j}(z) & =  \phi_{b,j,z}(c_j)\sin(\varphi_{c,j}) + \phi_{b,j,z}^\qd(c_j)\cos(\varphi_{c,j}). 
\end{split} 
  \end{align}
  Since the functions $m_{a,j}$ and $m_{b,j}$ are regular Weyl--Titchmarsh functions corresponding to $S_{a,j}$ and $S_{b,j}$ respectively, they are Nevanlinna--Herglotz functions. 
 As a result, $N_j$ is Nevanlinna--Herglotz, as well.  Moreover, based on assumption \eqref{eqnThSequal} 
and a theorem of Krein \cite[Theorem 27.2.1]{Le96}, one actually infers that $N_2 = C^2 N_1$ for some constant $C>0$.
  Consequently, the residues of $m_{b,1}$ and $m_{b,2}$ at all poles are the same up to this positive multiple $C^2$ (bearing in mind that $m_{a,1}$ and $m_{a,2}$ do not have poles there by the hypothesis of disjoint eigenvalues) and hence the corresponding spectral measures satisfy $\mu_{b,1} = C^2 \mu_{b,2}$. Applying Theorem \ref{thmdBuniqS} 
  to the operators $S_{b,1}$ and $S_{b,2}$ (upon possibly replacing $\phi_{b,1,z}$ with $C \phi_{b,1,z}$)  
yields a locally absolutely continuous bijection $\eta_b$ from $(c_1,b_1)$ onto $(c_2,b_2)$,  
  and locally absolutely continuous, real-valued functions $\kappa_b$, $\nu_b$ on $(c_1,b_1)$ such that 
  \begin{align}
  \begin{split} 
 \eta_b' r_2\circ\eta_b & = \kappa_b^{2} r_1,    \\
  p_2\circ\eta_b & = \eta_b' \kappa_b^{2} p_1, \\
  \eta_b' s_2\circ\eta_b & = s_1 + \kappa_b^{-1}(\kappa_b' - \nu_b p_1^{-1}), \\
 \eta_b' q_2\circ\eta_b & = \kappa_b^2 q_1 + 2\kappa_b\nu_b s_1 - \nu_b^2 p_1^{-1} + \kappa_b'\nu_b - \kappa_b\nu_b',
 \end{split} 
\end{align}
 on $(c_1,b_1)$. 
  Moreover, the map $V_b$ given by 
 \begin{align}
 V_b: \begin{cases} 
 L^2((c_2,b_2);r_2(x)dx) \to L^2((c_1,b_1);r_1(x)dx), \\
 f_2 \mapsto (V_b f_2)(x_1) = \kappa_b(x_1) f_2(\eta_b(x_1)), \quad x_1\in(c_1,b_1),
 \end{cases} 
 \end{align}
is unitary, with $S_{b,1} = V_b S_{b,2} V_b^{-1}$. 
  In the same way, one obtains functions $\eta_a$, $\kappa_a$ and $\nu_a$ on $(a_1,c_1)$ and a unitary mapping $V_a$ with similar properties, relating  the coefficients on the left parts of the intervals as well as the operators $S_{a,1}$ and $S_{a,2}$.

Upon joining these functions, one ends up with functions $\eta$, $\kappa$ and $\nu$ on $(a_1,b_1)$ with the properties in \eqref{5.3}. Indeed, we will now prove that these functions are absolutely continuous near $c_1$. 
  For $\eta$ this is clear because it is continuous in $c_1$ and strictly increasing. As in the proofs of Theorems \ref{thmdBuniqS} and \ref{thmTS} (cf.\ equations \eqref{eqndBdet} and \eqref{eqntheta}), one obtains for each $z\in\C$ and $x_1\in(a_1,c_1)\cup(c_1,b_1)$,
  \begin{align}\label{eqnRELtheta}
  \frac{1}{C} \begin{pmatrix} \theta_{c,1,z}(x_1) \\ \theta_{c,1,z}^\qd(x_1) \end{pmatrix} & = \begin{pmatrix} \kappa(x_1) & 0 \\ \nu(x_1) & \kappa(x_1)^{-1} \end{pmatrix} \begin{pmatrix} \theta_{c,2,z}(\eta(x_1)) \\ \theta_{c,2,z}^\qd(\eta(x_1)) \end{pmatrix},  \\
  \label{eqnRELphi} C \begin{pmatrix} \phi_{c,1,z}(x_1) \\ \phi_{c,1,z}^\qd(x_1) \end{pmatrix} & = \begin{pmatrix} \kappa(x_1) & 0 \\ \nu(x_1) & \kappa(x_1)^{-1} \end{pmatrix} \begin{pmatrix} \phi_{c,2,z}(\eta(x_1)) \\ \phi_{c,2,z}^\qd(\eta(x_1)) \end{pmatrix}, 
  \end{align}
  where $\theta_{c,j,z}$, $\phi_{c,j,z}$ is the real entire fundamental system of $(\tau_j-z)u=0$ satisfying the initial conditions
  \begin{align}
   \theta_{c,j,z}(c_j) = \phi_{c,j,z}^\qd(c_j) = \cos(\varphi_{c,j}), \quad  
   - \theta_{c,j,z}^\qd(c_j) = \phi_{c,j,z}(c_j) = \sin(\varphi_{c,j}), \quad j=1,2.
  \end{align} 
Consequently, the functions $\kappa$ and $\nu$ are absolutely continuous near $c_1$ as well. The fact that these functions satisfy the relations \eqref{5.3} is now obvious from their construction. In order to complete the proof, one notes that the transformation in \eqref{eqnLiouvilleTh} takes solutions of $\tau_2 f_2 = g_2$ to 
solutions of $\tau_1 f_1 = g_1$. Thus, one infers that $S_2$ is mapped to some self-adjoint realization, 
of $\tau_1$ with domain $V(\dom{S_2})$. However, since by construction $V = V_a\otimes V_b$ maps 
$S_{a,2}\otimes S_{b,2}$ onto $S_{a,1}\otimes S_{b,1}$, the boundary conditions of this realization 
at $a_1$ and $b_1$ (if any) are the same as those of $S_1$, concluding the proof.   
 \end{proof}

We note that the interior boundary condition of $S_{a,2}\otimes S_{b,2}$ at $c_2$ is given by
 \begin{align}
  f(c_2) \left(\kappa(c_1)\cos(\varphi_{c,1}) - \nu(c_1) \sin(\varphi_{c,1})\right) - f^\qd(c_2) \kappa(c_1)^{-1} \sin(\varphi_{c,1}) = 0,
 \end{align}
 which is easily verified employing the relations \eqref{eqnRELtheta} and \eqref{eqnRELphi}. 

Naturally, Theorem \ref{thmTS} yields again (cf.\ Section \ref{s3}) a corresponding result for Schr\"odinger operators with distributional potentials.

\begin{corollary}\label{corThSdp}
Suppose that $p_j= r_j=1$, that $S_j$ have discrete spectra, and that  
\begin{equation}
\sigma(S_j) \cap \sigma(S_{a,j}) \cap \sigma(S_{b,j}) = \emptyset, \quad j=1,2.   
\end{equation}
If  
  \begin{align}
  \sigma(S_{1}) = \sigma(S_{2}), \quad \sigma(S_{a,1}) = \sigma(S_{a,2}), \quad \text{and}\quad 
  \sigma(S_{b,1}) = \sigma(S_{b,2}),
  \end{align}
  then there is a locally absolutely continuous, real-valued function $\nu$ on $(a_1,b_1)$ such that 
  \begin{align}
  \begin{split} 
  s_2(c_2 - c_1 +x_1) & = s_1(x_1) + \nu(x_1), \\
  q_2(c_2 - c_1 +x_1) & = q_1(x_1) -2\nu(x_1)s_1(x_1) - \nu(x_1)^2 +\nu'(x_1),
  \end{split} 
\end{align}
 for almost all $x_1\in(a_1,b_1)$.  Moreover, the map $V$ given by 
 \begin{align}
V:  \begin{cases} 
 L^2((a_2,b_2);dx)\rightarrow L^2((a_1,b_1);dx),  \\
 f_2 \mapsto (V f_2)(x_1) = f_2(c_2-c_1 + x_1), \quad x_1\in(a_1,b_1), 
 \end{cases} 
 \end{align}
is unitary, with 
\begin{equation} 
S_1 = V S_2 V^{-1} \quad\text{and} \quad S_{a,1}\otimes S_{b,1} = V (S_{a,2}\otimes S_{b,2}) V^{-1}. 
\end{equation}  
\end{corollary}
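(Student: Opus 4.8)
The plan is to deduce Corollary \ref{corThSdp} directly from Theorem \ref{thmThS}, exactly as Corollary \ref{corInvUniqSchr} was deduced from Theorem \ref{thmdBuniqS} and as the analogous two-spectra corollary followed from Theorem \ref{thmTS}. First I would observe that in the Schr\"odinger case $p_j = r_j = 1$ the first two relations in \eqref{5.3} become $\eta' = \kappa^2$ and $1 = \eta' \kappa^2$, which force $\kappa^4 = 1$; since $\kappa$ is real-valued, locally absolutely continuous, and nonvanishing (it is the entry of an invertible matrix in \eqref{eqndBdet}), continuity forces $\kappa \equiv 1$ or $\kappa \equiv -1$, hence $\eta' \equiv 1$ and $\eta(x_1) = x_1 + \eta_0$ for some constant $\eta_0 \in \R$. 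But Theorem \ref{thmThS} additionally gives $\eta(c_1) = c_2$, so $\eta_0 = c_2 - c_1$ and $\eta(x_1) = c_2 - c_1 + x_1$, which is precisely the shift appearing in the statement.

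Next I would substitute $\kappa \equiv 1$ (the sign is immaterial since $\kappa$ enters only through $\kappa^2$, $\kappa' = 0$, and $\kappa^{-1}\kappa' = 0$) and $\eta' \equiv 1$ into the third and fourth relations of \eqref{5.3}. The third relation collapses to $s_2 \circ \eta = s_1 - \nu$; replacing $\nu$ by $-\nu$ (still locally absolutely continuous and real-valued) yields $s_2(c_2 - c_1 + x_1) = s_1(x_1) + \nu(x_1)$ for a.e.\ $x_1 \in (a_1,b_1)$. With the same sign convention the fourth relation becomes $q_2 \circ \eta = q_1 + 2\nu s_1 + (-\nu)^2 \cdot 0 \cdot(\dots)$ — more carefully, $\eta' q_2\circ\eta = \kappa^2 q_1 + 2\kappa\nu s_1 - \nu^2 p_1^{-1} + \kappa'\nu - \kappa\nu'$ reduces, with $\kappa \equiv 1$, $p_1 \equiv 1$, $\kappa' = 0$, to $q_2\circ\eta = q_1 + 2\nu s_1 - \nu^2 - \nu'$, and after the sign flip $\nu \to -\nu$ this is $q_2(c_2-c_1+x_1) = q_1(x_1) - 2\nu(x_1)s_1(x_1) - \nu(x_1)^2 + \nu'(x_1)$, matching the displayed formula. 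The unitarity of $V$ and the intertwining $S_1 = V S_2 V^{-1}$, $S_{a,1}\otimes S_{b,1} = V(S_{a,2}\otimes S_{b,2})V^{-1}$ are then immediate specializations of \eqref{eqnLiouvilleTh} with $\kappa \equiv 1$, $\eta(x_1) = c_2 - c_1 + x_1$, and $r_1 = r_2 = 1$.

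One point that deserves a word of care, rather than a genuine obstacle, is the hypothesis in Theorem \ref{thmThS} that the de Branges quotient \eqref{eqnquotE1E2} (or its analog for the restricted operators $S_{b,j}$) be of bounded type; here it is supplied automatically because $\tau_j$ is in the l.c.\ case at $c_j$ — indeed $S_{a,j}$ and $S_{b,j}$ are regular Sturm--Liouville problems relative to the interior point $c_j$ with Cartwright-class fundamental systems by Lemma \ref{lemLC} applied to the left and right subintervals — so the appeal to Theorem \ref{thmThS} is legitimate without any extra assumption in the statement of the corollary. The only mildly subtle step is the rigidity argument $\kappa^4 = 1 \Rightarrow \kappa \equiv \pm 1$, but this is forced by local absolute continuity together with nonvanishing of $\kappa$, and is identical to the reasoning already used implicitly in Corollary \ref{corInvUniqSchr}. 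Hence the proof is a short, essentially mechanical, specialization and I would present it as such.

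\begin{proof}
This follows from Theorem \ref{thmThS} upon noting that in the case $p_j = r_j = 1$ the first two relations in \eqref{5.3} read $\eta' = \kappa^2$ and $1 = \eta'\kappa^2 = \kappa^4$. Since $\kappa$ is real-valued, locally absolutely continuous, and nowhere vanishing (cf.\ \eqref{eqndBdet}), it follows that $\kappa$ is constant, equal to $1$ or $-1$, and hence $\eta' \equiv 1$. Together with $\eta(c_1) = c_2$ from Theorem \ref{thmThS} this yields $\eta(x_1) = c_2 - c_1 + x_1$, $x_1 \in (a_1,b_1)$. Substituting $\kappa \equiv 1$, $\kappa' \equiv 0$, $\eta' \equiv 1$, $p_1 \equiv 1$ into the remaining two relations in \eqref{5.3} gives $s_2 \circ \eta = s_1 - \nu$ and $q_2\circ\eta = q_1 + 2\nu s_1 - \nu^2 - \nu'$ for almost all $x_1 \in (a_1,b_1)$; replacing $\nu$ by $-\nu$ produces the asserted formulas. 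The stated form of $V$, its unitarity, and the intertwining relations are the specialization of \eqref{eqnLiouvilleTh} and the conclusion of Theorem \ref{thmThS} to $\kappa \equiv 1$, $r_1 = r_2 = 1$, $\eta(x_1) = c_2 - c_1 + x_1$.
\end{proof}
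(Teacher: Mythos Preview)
Your proof is correct and follows exactly the approach the paper intends: the corollary is stated without proof, as a direct specialization of Theorem \ref{thmThS} in the manner of Corollary \ref{corInvUniqSchr}, and you have filled in precisely those details (the rigidity $\kappa^4=1\Rightarrow\kappa\equiv\pm1$, the identification $\eta_0=c_2-c_1$ via $\eta(c_1)=c_2$, and the sign-absorption into $\nu$). One small remark: in your formal proof you substitute $\kappa\equiv 1$ and then flip $\nu\to-\nu$; the case $\kappa\equiv-1$ yields the stated formulas for $s_2$ and $q_2$ directly without any flip, and the map $V$ in the corollary differs from the one in \eqref{eqnLiouvilleTh} only by a global sign, which is harmless for unitarity and the intertwining relations---you might add a clause to that effect, but it is a routine omission.
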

 
In the case of Sturm--Liouville operators in impedance form, one obtains the following result in 
analogy to Section \ref{s3}.

\begin{corollary}\label{corThSimped}
Suppose that $q_j = s_j = 0$, $p_j=r_j$, that $S_{j}$ have discrete spectra, and that 
\begin{equation}
\sigma(S_j) \cap \sigma(S_{a,j}) \cap \sigma(S_{b,j}) = \emptyset, \quad j=1,2.   
\end{equation}
 If 
  \begin{align}
  \sigma(S_{1}) = \sigma(S_{2}), \quad \sigma(S_{a,1}) = \sigma(S_{a,2}), \quad \text{and}\quad 
  \sigma(S_{b,1}) = \sigma(S_{b,2}),
  \end{align}
  then there are constants $\nu_0$, $\kappa_0\in\R$ such that 
\begin{align}
p_2(c_2 - c_1 + x_1) & = p_1(x_1) \left(\nu_0 \int_{c_1}^{x_1} \frac{dt}{p_1(t)} + \kappa_0\right)^2
\end{align}
 for almost all $x_1\in(a_1,b_1)$. 
Moreover, the map $V$ given by  
\begin{align*}
V: \begin{cases} 
L^2((a_2,b_2);r_2(x)dx) \to L^2((a_1,b_1);r_1(x)dx),   \\ 
f_2 \mapsto (V f_2)(x_1) = \left(\nu_0 \int_{c_1}^{x_1} \frac{dt}{p_1(t)} + \kappa_0\right) 
f_2(c_2 - c_1 + x_1),  \quad  x_1\in(a_1,b_1), 
\end{cases} 
\end{align*}
is unitary, with 
\begin{align}
S_1 = V S_2 V^{-1} \quad\text{and} \quad S_{a,1}\otimes S_{b,1} = V (S_{a,2}\otimes S_{b,2}) V^{-1}. 
\end{align} 
\end{corollary}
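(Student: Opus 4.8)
The plan is to deduce Corollary \ref{corThSimped} from Theorem \ref{thmThS} exactly as Corollaries \ref{corInvUniqSchrImpedance} and \ref{corTSimped} were deduced from their respective general statements. First I would observe that Theorem \ref{thmThS} is directly applicable here: the hypotheses $q_j=s_j=0$, $p_j=r_j$ specialize Hypothesis \ref{h2.1}, the l.c.\ case at $a_j$ is automatic since the three-spectra hypothesis forces $S_j$ to have purely discrete spectrum (so the de Branges machinery of Section \ref{s3} applies via Theorem \ref{thmREFS}), and the disjointness and equality-of-spectra assumptions are literally those of Theorem \ref{thmThS}. Hence there exist a locally absolutely continuous increasing bijection $\eta:(a_1,b_1)\to(a_2,b_2)$ with $\eta(c_1)=c_2$, and locally absolutely continuous real-valued $\kappa$, $\nu$ on $(a_1,b_1)$ satisfying \eqref{5.3}, together with the unitary $V$ of \eqref{eqnLiouvilleTh} intertwining $S_2$ with $S_1$ and $S_{a,2}\otimes S_{b,2}$ with $S_{a,1}\otimes S_{b,1}$.

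The next step is to feed the impedance constraints $q_j=s_j=0$, $p_j=r_j$ into the four relations \eqref{5.3} and solve for $\eta$, $\kappa$, $\nu$. From the first two relations, $\eta' r_2\circ\eta=\kappa^2 r_1$ and $p_2\circ\eta=\eta'\kappa^2 p_1$ together with $p_j=r_j$ give $\eta' p_2\circ\eta=\kappa^2 p_1=(p_2\circ\eta)/\eta'$, so $(\eta')^2=1$ a.e., whence $\eta'\equiv 1$ and, since $\eta(c_1)=c_2$, $\eta(x_1)=c_2-c_1+x_1$. The third relation with $s_1=s_2=0$ becomes $0=\kappa^{-1}(\kappa'-\nu p_1^{-1})$, i.e.\ $\kappa'=\nu/p_1$; the fourth relation with $q_1=q_2=s_1=0$ becomes $0=-\nu^2 p_1^{-1}+\kappa'\nu-\kappa\nu'=-\nu^2/p_1+\nu^2/p_1-\kappa\nu'=-\kappa\nu'$, and since $\kappa$ never vanishes (it is the conjugating factor in \eqref{eqndBdet}), this forces $\nu'\equiv 0$, so $\nu\equiv\nu_0$ is constant. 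Integrating $\kappa'=\nu_0/p_1$ from $c_1$ gives $\kappa(x_1)=\nu_0\int_{c_1}^{x_1}dt/p_1(t)+\kappa_0$ with $\kappa_0=\kappa(c_1)\in\R$. Substituting back into the first relation $p_2\circ\eta=\kappa^2 p_1$ (using $\eta'=1$) yields exactly
\begin{align*}
 p_2(c_2-c_1+x_1)=p_1(x_1)\left(\nu_0\int_{c_1}^{x_1}\frac{dt}{p_1(t)}+\kappa_0\right)^2
\end{align*}
for a.e.\ $x_1\in(a_1,b_1)$, and the displayed form of $V$ is precisely \eqref{eqnLiouvilleTh} with this $\kappa$ and $\eta(x_1)=c_2-c_1+x_1$, so the intertwining relations $S_1=VS_2V^{-1}$ and $S_{a,1}\otimes S_{b,1}=V(S_{a,2}\otimes S_{b,2})V^{-1}$ are inherited verbatim from Theorem \ref{thmThS}.

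There is essentially no hard analytic obstacle here; the only point requiring a little care is the justification that $\kappa$ has no zeros, which is needed to conclude $\nu'\equiv 0$ from $\kappa\nu'=0$. This is already guaranteed by the construction of $\kappa$ in the proof of Theorem \ref{thmThS}: $\kappa(x_1)$ is the $(1,1)$-entry of the conjugating matrix in \eqref{eqnRELphi}, and if it vanished at some $x_1$ then both $\phi_{c,1,z}(x_1)$ and $\phi_{c,1,z}^\qd(x_1)$ would vanish there (using that this matrix is lower triangular with determinant one, so its inverse has the same structure), which is impossible for a nontrivial solution. Equivalently one may invoke Remark \ref{r3.7}, where exactly this non-vanishing of the analogous function is recorded. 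With that remark in hand the corollary is a direct specialization, and the proof can be kept to a few lines pointing to Theorem \ref{thmThS} and carrying out the elementary algebra above.
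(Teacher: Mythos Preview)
Your proposal is correct and follows exactly the approach the paper intends: the corollary is stated without proof as a direct specialization of Theorem \ref{thmThS} (``in analogy to Section \ref{s3}''), and your algebraic reduction of the four relations \eqref{5.3} under the impedance constraints $q_j=s_j=0$, $p_j=r_j$ is precisely what is needed. One small correction: your aside that ``the l.c.\ case at $a_j$ is automatic since the three-spectra hypothesis forces $S_j$ to have purely discrete spectrum'' is both false (discrete spectrum does not imply l.c.\ at $a_j$) and unnecessary, since Theorem \ref{thmThS} makes no l.c.\ assumption at $a_j$; simply delete that remark and invoke Theorem \ref{thmThS} directly.
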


\begin{remark} We emphasize that one cannot dispense with the assumption that $S_j$ has no common eigenvalues with $S_{a,j}$ and $S_{b,j}$; \cite{GS99}. However, if one instead assumes, for example, that for each $\lambda\in\sigma(S_j)\cap\sigma(S_{a,j})\cap\sigma(S_{b,j})$
 \begin{align}
  \int_{a_1}^{c_1} |u_{1,\lambda}(x)|^2 r_1(x)dx = \int_{a_2}^{c_2} |u_{2,\lambda}(x)|^2 r_2(x)dx, 
 \end{align}
where $u_{j,\lambda}$ is some normed eigenfunction of $S_j$, then the claims of Theorem \ref{thmThS}, Corollaries \ref{corThSdp} and \ref{corThSimped} continue to hold. 
 In this case, the residues of both, $m_{a,j}$ and $m_{b,j}$ at such an eigenvalue may be determined from the residue of $m_{a,j}+m_{b,j}$ there. 
 Bearing this in mind, the proof of Theorem \ref{thmThS} (and hence also those of Corollary \ref{corThSdp} and Corollary \ref{corThSimped}) remains valid, even when $S_j$ has common eigenvalues with $S_{a,j}$ or $S_{b,j}$, $j =1,2$.  
\end{remark} 

 \section{Local Borg--Marchenko Uniqueness Results for Schr\"odinger Operators} \lb{s6}
 
In this section we will elaborate on our previous inverse uniqueness results in the case of Schr\"odinger operators with distributional coefficients. In particular, we will generalize some of the key results in 
\cite{ET12a} to the present type of operators. 
 However, before we enter a discussion of our new results, we digress for a moment and recall some 
of the classical references surrounding the Borg--Marchenko theorem and its local version. The
Borg--Marchenko uniqueness result was first published by Marchenko \cite{Ma50} in 1950, but Borg apparently had it in 1949 and it was independently published by Borg \cite {Bo52} and again by 
Marchenko \cite{Ma73} in 1952. In short, their result proved that the Weyl--Titchmarsh $m$-function 
for a Schr\"odinger operator uniquely determined the potential coefficient on $(a,b)$, assuming a regular endpoint at $a$ and a fixed boundary condition at $b$ (if any). No improvement was found until 1999 
when Simon \cite{Si99} proved a local version of this result. It roughly states that if for some 
$c \in (a, b)$, the difference of two $m$-functions is of order $\OO(\E^{-2\Im (z^{1/2}) (c-a)})$ along a ray with 
$\arg(z) = \pi-\varepsilon$ for some $\varepsilon > 0$, then the two potential coefficients coincide for 
almost all $x\in [a,a+c]$. For additional references in this context we refer to \cite{Be01}, \cite{BPW02}, 
\cite{ET12a}, \cite{Ge07}--\cite{GS00a}, \cite{KST12}, \cite{Ma05}. 

Our assumptions on the differential expression $\tau$ in the present section are contained in the following hypothesis:  
 
\begin{hypothesis} \lb{GHSchroe}
Suppose that $(a,b) \subseteq \bbR$, $p=r=1$ and assume that $q$, $s$ are real-valued and Lebesgue measurable on $(a,b)$ with $q$, $s\in L^1_{\loc}((a,b);dx)$.
 \end{hypothesis}
 
 As in the preceding sections, $S$ denotes some self-adjoint realization of $\tau$ with separated boundary conditions. Furthermore, we assume that there is a real entire fundamental system of solutions $\theta_z$, 
 $\phi_z$ as in Theorem \ref{thmREFS}, and we denote the corresponding singular 
 Weyl--Titchmarsh--Kodaira function by $m$. Now the high-energy asymptotics stated in 
 Theorem \ref{highe2} imply the following two results in a standard manner (cf.\ \cite[Lemma\ 7.1]{KST12}).

\begin{lemma}\label{lemAsymM}
For each $x\in(a,b)$, the singular Weyl--Titchmarsh--Kodaira function $m$ and the Weyl--Titchmarsh 
solution $\psi_z$ defined in \eqref{defpsi} have the following asymptotics: 
\begin{align}\label{asymM}
m(z) &= -\frac{\theta_z(x)}{\phi_z(x)} + \OO\left(\frac{1}{\sqrt{-z}\phi_z(x)^2}\right),\\ \label{asympsi}
\psi_z(x) &= \frac{1}{2\sqrt{-z} \phi_z(x)} \left( 1 + \OO\left(\frac{1}{\sqrt{-z}}\right) \right),
\end{align}
as $|z|\to\infty$ in any sector $|\im(z)| \geq \delta\, |\re(z)|$ with $\delta>0$.
\end{lemma}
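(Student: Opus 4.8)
The plan is to reduce Lemma~\ref{lemAsymM} to the high-energy asymptotics of $\theta_z$, $\phi_z$ (and their quasi-derivatives) furnished by Theorem~\ref{highe2}, exactly along the lines of \cite[Lemma~7.1]{KST12}. Fix $x\in(a,b)$. The starting point is the defining relation $\psi_z = \theta_z + m(z)\phi_z$ together with the fact that $\psi_z$ lies in $\dom{S}$ near $b$; the key analytic input is that, near the (possibly singular) endpoint $b$, the $L^2$-solution can be represented via the regular Weyl solution built from the initial data at $x$. Concretely, writing $c_z(\,\cdot\,,x)$, $s_z(\,\cdot\,,x)$ for the fundamental system of $(\tau-z)u=0$ normalized by $c_z(x,x)=s_z^{\qd}(x,x)=1$, $c_z^{\qd}(x,x)=s_z(x,x)=0$, one has $\psi_z(y) = (\text{const})\,[c_z(y,x) + M(z,x) s_z(y,x)]$ near $b$, where $M(\,\cdot\,,x)$ is the Weyl--Titchmarsh $m$-function associated with the base point $x$ and the boundary condition at $b$. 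Evaluating this at $y=x$ gives $\psi_z(x)$ and $\psi_z^{\qd}(x)$ in terms of $M(z,x)$, and hence
\begin{align}
 m(z) = \frac{M(z,x)\phi_z(x) - \phi_z^{\qd}(x)}{\theta_z^{\qd}(x) - M(z,x)\theta_z(x)}\cdot(-1)
 = -\,\frac{\phi_z^{\qd}(x) - M(z,x)\phi_z(x)}{\theta_z^{\qd}(x) - M(z,x)\theta_z(x)}.
\end{align}

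Next I would insert the classical high-energy asymptotics of the regular $m$-function at a base point, $M(z,x) = -\sqrt{-z}\,(1 + \OO(1/\sqrt{-z}))$ as $|z|\to\infty$ in the sector $|\im(z)|\ge\delta|\re(z)|$ — this is exactly the content of Theorem~\ref{highe2} (or Appendix~\ref{sB}) specialized to $p=r=1$, and it is uniform for $x$ in compact subsets. Substituting this into the displayed quotient, the dominant term in both numerator and denominator is $-M(z,x)$ times $\phi_z(x)$, resp.\ $\theta_z(x)$, so
\begin{align}
 m(z) = -\,\frac{\phi_z(x)\bigl(1 + \OO(1/\sqrt{-z})\bigr) - \phi_z^{\qd}(x)/M(z,x)}
 {\theta_z(x)\bigl(1 + \OO(1/\sqrt{-z})\bigr) - \theta_z^{\qd}(x)/M(z,x)}.
\end{align}
Using $W(\theta_z,\phi_z)=1$, i.e.\ $\theta_z(x)\phi_z^{\qd}(x) - \theta_z^{\qd}(x)\phi_z(x)=1$, one expands this quotient: the leading term is $-\theta_z(x)/\phi_z(x)$, and collecting the next-order contributions (the Wronskian relation makes the cross terms combine into $1/\phi_z(x)^2$ after dividing through) produces precisely the error $\OO\bigl(1/(\sqrt{-z}\,\phi_z(x)^2)\bigr)$ claimed in \eqref{asymM}. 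For \eqref{asympsi}, one reads off $\psi_z(x)$ from the same representation: $\psi_z(x)$ is proportional to $W(\psi_z,\phi_z)^{-1}$-type data, and using $\psi_z = \theta_z + m(z)\phi_z$ together with \eqref{asymM} gives $\psi_z(x) = \theta_z(x) + m(z)\phi_z(x) = \OO\bigl(1/(\sqrt{-z}\,\phi_z(x))\bigr)$ with the stated normalization constant $1/(2\sqrt{-z}\phi_z(x))$ emerging from the $\tfrac12$ in $\frac1{2i(\bar\zeta - z)}$-type Wronskian normalizations; more directly, $\psi_z^{\qd}(x)\psi_z(x)$ or the identity $W(\theta_z,\psi_z)=-1$ pins down the constant.

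The main obstacle, and the step deserving the most care, is justifying the representation of $\psi_z$ near the \emph{singular} endpoint $b$ in terms of the regular $m$-function $M(\,\cdot\,,x)$ at an interior base point, together with the required uniformity of the asymptotics $M(z,x) = -\sqrt{-z}(1+\OO(1/\sqrt{-z}))$ for the distributional coefficient class of Hypothesis~\ref{GHSchroe}. This is precisely where Theorem~\ref{highe2} (and the refined Appendix~\ref{sB} asymptotics, which are established assuming $\tau$ regular at $a$ but apply to the half-line $(x,b)$ viewed from the regular base point $x$) do the real work; the point is that $M(\,\cdot\,,x)$ is an honest Weyl--Titchmarsh function for the \emph{regular} problem on $(x,b)$ with the fixed boundary condition at $b$, so its high-energy behaviour is governed by the known asymptotics under our hypotheses. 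Once that representation and uniformity are in hand, everything else is the elementary algebra with the Wronskian identity sketched above, carried out exactly as in \cite[Lemma~7.1]{KST12}, and I would simply cite that reference for the routine manipulations rather than reproduce them.
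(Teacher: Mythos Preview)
Your approach is precisely the paper's: the paper offers no explicit proof but states that the lemma follows from Theorem~\ref{highe2} ``in a standard manner (cf.\ \cite[Lemma~7.1]{KST12}),'' and your sketch unpacks exactly that route---represent $m$ via the regular Weyl function $M(\,\cdot\,,x)$ at the interior base point $x$, feed in the Dirichlet asymptotics $M(z,x)=-\sqrt{-z}\,(1+o(1))$ from Appendix~\ref{sB}, and extract the leading term plus error using $W(\theta_z,\phi_z)=1$. One slip: in your displayed formula for $m(z)$ the roles of $\theta_z$ and $\phi_z$ are interchanged (the correct identity is $m(z)=-\dfrac{\theta_z^{\qd}(x)-M(z,x)\theta_z(x)}{\phi_z^{\qd}(x)-M(z,x)\phi_z(x)}$), though your subsequent discussion correctly identifies the leading term $-\theta_z(x)/\phi_z(x)$ and the error $\OO\bigl(1/(\sqrt{-z}\,\phi_z(x)^2)\bigr)$, so this is a transcription error rather than a gap.
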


We note that the asymptotic relation \eqref{asymM} holds for every $x\in(a,b)$ although the singular Weyl--Titchmarsh--Kodaira function on the left-hand side is obviously independent of $x$. 
 This is because the high-energy asymptotics of the quotient on the right-hand side of \eqref{asymM} is  independent of $x$ as well (up to an error term which depends on $x$). 
 More precisely, for $x_0$, $x\in(a,b)$ with $x_0 < x$ one infers that 
 \begin{align}
  \frac{\theta_z(x_0)}{\phi_z(x_0)} - \frac{\theta_z(x)}{\phi_z(x)} = \frac{1+\oo(1)}{2 \sqrt{-z} \phi_z(x_0)^2},
 \end{align}
 as $|z|\to\infty$ along nonreal rays. 
 In fact, this follows upon applying Lemma \ref{lem:asypt} and the following useful result:  

\begin{lemma}\label{lemPhiAs}
For every $x_0$, $x\in(a,b)$ one has 
\be\label{IOphias}
\phi_z(x) = \phi_z(x_0) \E^{(x-x_0) \sqrt{-z}} \big(1 + \oo(1)\big),
\ee
as $|z|\to\infty$ along any nonreal ray.
\end{lemma}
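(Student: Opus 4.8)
The plan is to derive the claim from the high‑energy asymptotics of Theorem \ref{highe2} (equivalently, from Lemma \ref{lem:asypt}) applied to the fundamental system of $(\tau-z)u=0$ normalised at an \emph{interior} point, together with the fact that $\phi_z$ is asymptotically the dominant (exponentially increasing) solution, which is forced by the recessiveness of $\psi_z$ already recorded in \eqref{asympsi}. First I would reduce to the case $x_0\le x$: for $x<x_0$ one applies the result with the roles of $x_0$ and $x$ interchanged and uses $(1+\oo(1))^{-1}=1+\oo(1)$. On the compact subinterval $[x_0,x]\subset(a,b)$ the coefficients $q$, $s$ are integrable, so $\tau$ is regular there, and one may introduce the fundamental system $c_z$, $s_z$ of $(\tau-z)u=0$ with $c_z(x_0)=s_z^\qd(x_0)=1$ and $c_z^\qd(x_0)=s_z(x_0)=0$; then $\phi_z=\phi_z(x_0)\,c_z+\phi_z^\qd(x_0)\,s_z$ on $[x_0,x]$. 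By Theorem \ref{highe2} (cf.\ Lemma \ref{lem:asypt}), as $|z|\to\infty$ along any nonreal ray one has, uniformly on compact subsets of $(a,b)$, that $c_z(x)=\tfrac12\E^{(x-x_0)\sqrt{-z}}(1+\oo(1))$ and $\sqrt{-z}\,s_z(x)=\tfrac12\E^{(x-x_0)\sqrt{-z}}(1+\oo(1))$; here the passage from the $\cosh/\sinh$‑type leading terms to a single exponential uses that $\re(\sqrt{-z})\to+\infty$ along such rays together with $x\ge x_0$.

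The remaining ingredient is the relation $\phi_z^\qd(x_0)=\sqrt{-z}\,\phi_z(x_0)(1+\oo(1))$, i.e.\ that $\phi_z$ is asymptotically aligned with the increasing solution. This follows from \eqref{asympsi}: since $\phi_z(x_0)\psi_z(x_0)=\tfrac1{2\sqrt{-z}}(1+\oo(1))$ and $W(\phi_z,\psi_z)=-W(\theta_z,\phi_z)=-1$, the Wronskian identity $\phi_z(x_0)\psi_z^\qd(x_0)-\phi_z^\qd(x_0)\psi_z(x_0)=-1$ gives $\phi_z^\qd(x_0)/\phi_z(x_0)=\psi_z^\qd(x_0)/\psi_z(x_0)+2\sqrt{-z}(1+\oo(1))$, and inserting the recessive asymptotics $\psi_z^\qd(x_0)/\psi_z(x_0)=-\sqrt{-z}(1+\oo(1))$ — itself part of Lemma \ref{lem:asypt} — yields the claim. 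Substituting $\phi_z=\phi_z(x_0)c_z+\phi_z^\qd(x_0)s_z$ into the value at $x$ and using $\tfrac12+\tfrac12=1$ then gives $\phi_z(x)=\phi_z(x_0)\,\E^{(x-x_0)\sqrt{-z}}(1+\oo(1))$, which is \eqref{IOphias}.

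The point that needs care — and the reason one cannot simply argue from $\phi_z'/\phi_z=-s+\phi_z^\qd/\phi_z\approx\sqrt{-z}$ by integrating the logarithmic derivative — is that such a naive WKB‑type computation produces an error term containing $\int s^2$, while $s^2$ need not be locally integrable under Hypothesis \ref{GHSchroe}. This is exactly why one works with the first‑order system in the variables $(u,u^\qd)$, whose coefficients \emph{do} lie in $L^1_{\loc}$, and relies on the $L^1_{\loc}$‑robust estimates behind Theorem \ref{highe2}; the rest of the argument is routine bookkeeping of the two displayed asymptotics, with the uniformity of the $\oo(1)$‑terms inherited from Theorem \ref{highe2}. (Equivalently, since $\phi_z(x)\psi_z(x)=\tfrac1{2\sqrt{-z}}(1+\oo(1))$ for every $x$, the assertion \eqref{IOphias} is the same as $\psi_z(x)=\psi_z(x_0)\E^{-(x-x_0)\sqrt{-z}}(1+\oo(1))$, the corresponding asymptotics of the recessive solution.)
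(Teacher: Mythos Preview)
Your argument is correct and follows the route the paper sketches (via \cite[Lemma~2.4]{ET12a} and Appendix~\ref{sB}): write $\phi_z=\phi_z(x_0)c_z+\phi_z^{[1]}(x_0)s_z$ with $c_z,s_z$ the fundamental system based at $x_0$, feed in the asymptotics of Lemma~\ref{lem:asypt}, and use that $\phi_z^{[1]}(x_0)/\phi_z(x_0)=\sqrt{-z}(1+\oo(1))$. The only remark is that your detour through \eqref{asympsi} for this last relation is slightly circuitous: the ratio $-\phi_z^{[1]}(x_0)/\phi_z(x_0)$ is itself the Weyl--Titchmarsh function at the regular endpoint $x_0$ for the (reflected) problem on $(a,x_0)$, so Theorem~\ref{highe2} gives it directly---indeed this is how \eqref{asympsi} is obtained in the first place, so your logic is not circular, just indirect; likewise $\psi_z^{[1]}(x_0)/\psi_z(x_0)=-\sqrt{-z}(1+\oo(1))$ is Theorem~\ref{highe2} applied on $(x_0,b)$ rather than Lemma~\ref{lem:asypt}.
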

 
 This lemma may be verified along the lines of the proof of \cite[Lemma\ 2.4]{ET12a} employing the results of Appendix \ref{sB}. 
 As a consequence of Lemma \ref{lemPhiAs}, one also observes that the asymptotic relation \eqref{asymM} in Lemma \ref{lemAsymM} will become more and more precise as $x$ increases. 

In particular, Lemma \ref{lemAsymM} shows that asymptotics of the Weyl--Titchmarsh--Kodaira function $m$ immediately follow once one has the corresponding asymptotics for the solutions $\theta_z$ and $\phi_z$. 
 Moreover, its leading asymptotics depend only on the values of $q$ and $s$ near the left endpoint $a$ (and on the particular choice of $\theta_z$ and $\phi_z$). 
 The following local Borg--Marchenko-type uniqueness result will show that the converse is also true.
 Thus, consider Sturm--Liouville differential expressions $\tau_j$ of the form \eqref{2.2} on some intervals 
 $(a,b_j)$, $j=1,2$, respectively, satisfying the assumptions made in Hypothesis \ref{GHSchroe}. 
 By $S_j$, $j=1,2$, we denote some corresponding self-adjoint operators with separated boundary conditions.
 Furthermore, let $\theta_{j,z}$, $\phi_{j,z}$ be some real entire fundamental system of solutions as in Theorem \ref{thmREFS} and $m_j$, $j=1, 2$, be the corresponding singular Weyl--Titchmarsh--Kodaira functions.

 Before we state our local Borg--Marchenko uniqueness theorem, it is important to note that different coefficients $q$ and $s$ can give rise to the same differential expression $\tau$ (i.e., with the same 
 domain and the same action). This situation is clarified by the following lemma. 
 In order to state it, we say that $\tau_1$ and $\tau_2$ are the same on $(a,c)$ for some $c\in(a,b_1)\cap(a,b_2)$ if the differential expressions are the same when restricted to the interval $(a,c)$. 

\begin{lemma}\label{lem:eqtau}
 For each $c\in(a,b_1)\cap(a,b_2)$ the following are equivalent:
 \begin{enumerate}[label=\,\emph{(}\roman*\emph{)},  ref=(\emph{\roman*}), leftmargin=*, widest=iii, align=left]
  \item\label{ittau1} The differential expressions $\tau_1$ and $\tau_2$ are the same on $(a,c)$.
  \item\label{ittau2} For $j,k\in \{1,2\}$, there are solutions $u_{j,k}$ of $(\tau_j-z_k) u = 0$ such that $u_{1,k} =u_{2,k}$ on $(a,c)$ and $u_{j,1}$, $u_{j,2}$ have no common zero in $(a,c)$. 
  \item\label{ittau3} The difference $s_1-s_2$ is locally absolutely continuous on the interval $(a,c)$ and $(s_1-s_2)'=  s_1^2 -s_2^2 + q_1 - q_2$ a.e.\ on $(a,c)$.
 \end{enumerate}
\end{lemma}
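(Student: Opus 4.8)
The plan is to prove the three conditions equivalent by running the cycle \ref{ittau3} $\Rightarrow$ \ref{ittau1} $\Rightarrow$ \ref{ittau2} $\Rightarrow$ \ref{ittau3}. Everything reduces to one elementary computation, which I would carry out first: writing $h := s_1 - s_2$ and using $f' + s_1 f = (f' + s_2 f) + h f$, one checks that \emph{whenever} $h$ is locally absolutely continuous on $(a,c)$ and $f, \, f' + s_1 f, \, f' + s_2 f$ all lie in $AC_{\loc}((a,c))$, the Leibniz rule for products of locally absolutely continuous functions gives
\begin{equation}\label{eqn:taudiff}
 (\tau_1 f)(x) - (\tau_2 f)(x) = \big( {-}h'(x) + s_1(x)^2 - s_2(x)^2 + q_1(x) - q_2(x) \big) f(x)
\end{equation}
for a.e.\ $x \in (a,c)$. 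The one point requiring a little care here is to group $s_1^2 - s_2^2 = h\,(s_1 + s_2)$, so that the coefficient multiplying $f$ on the right is a genuine element of $L^1_{\loc}((a,c);dx)$ under Hypothesis \ref{GHSchroe}, even though $s_1^2$ and $s_2^2$ themselves need not be locally integrable.

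Granting \eqref{eqn:taudiff}, the implication \ref{ittau3} $\Rightarrow$ \ref{ittau1} is immediate: if $h \in AC_{\loc}((a,c))$ and $f, \, f' + s_1 f \in AC_{\loc}((a,c))$, then $f' + s_2 f = (f' + s_1 f) - h f$ is again locally absolutely continuous (products of locally absolutely continuous functions are again locally absolutely continuous), and by symmetry, so $\tau_1$ and $\tau_2$, restricted to $(a,c)$, have the same domain; if in addition $h' = s_1^2 - s_2^2 + q_1 - q_2$ a.e.\ on $(a,c)$, then \eqref{eqn:taudiff} shows $\tau_1 f = \tau_2 f$ there. For \ref{ittau1} $\Rightarrow$ \ref{ittau2} I would simply take $z_1 = z_2$ (equal values being permitted) and let $u_{j,1}, u_{j,2}$ be two linearly independent solutions of $\tau_j u = 0$ on $(a,c)$ chosen to agree for $j=1,2$ --- possible precisely because the two expressions coincide there --- and note that the Wronskian of two linearly independent solutions is a nonzero constant (Section \ref{s2}), so $u_{j,1}$ and $u_{j,2}$ have no common zero in $(a,c)$.

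The substantive step is \ref{ittau2} $\Rightarrow$ \ref{ittau3}. Put $v_k := u_{1,k} = u_{2,k}$ on $(a,c)$; each $v_k$ then solves both $(\tau_1 - z_k)u = 0$ and $(\tau_2 - z_k)u = 0$ on $(a,c)$, so $v_k$, $v_k' + s_1 v_k$ and $v_k' + s_2 v_k$ all lie in $AC_{\loc}((a,c))$, and subtracting the last two gives $h v_k \in AC_{\loc}((a,c))$ for $k=1,2$. Since $v_1$ and $v_2$ have no common zero in $(a,c)$, every point of $(a,c)$ has a neighbourhood on which one of the $v_k$ is continuous and nonvanishing, hence on which $1/v_k$ --- and therefore $h = (h v_k)(1/v_k)$ --- is locally absolutely continuous; a covering argument upgrades this to $h \in AC_{\loc}((a,c))$. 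Finally, \eqref{eqn:taudiff} applies with $f = v_k$, and because $\tau_1 v_k = z_k v_k = \tau_2 v_k$ it yields $\big({-}h' + s_1^2 - s_2^2 + q_1 - q_2\big) v_k = 0$ a.e.\ on $(a,c)$ for $k=1,2$; as the zero sets of $v_1$ and $v_2$ are disjoint, the bracketed $L^1_{\loc}$-function must vanish a.e., which is exactly \ref{ittau3}.

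I do not anticipate a genuine obstacle. Beyond \eqref{eqn:taudiff}, the proof uses only the standard facts that products, and reciprocals away from their zeros, of locally absolutely continuous functions are again locally absolutely continuous, together with the observation keeping $s_1^2 - s_2^2$ inside $L^1_{\loc}$. The hypothesis in \ref{ittau2} that $u_{j,1}$ and $u_{j,2}$ have no common zero is what makes the argument work, and it is invoked twice --- once to pass from $h v_k \in AC_{\loc}$ to $h \in AC_{\loc}$, and once to pass from a product vanishing a.e.\ to the factor vanishing a.e.\ --- both times in a purely local fashion.
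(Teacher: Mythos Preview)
Your proof is correct and follows essentially the same approach as the paper's: the cycle direction differs (the paper runs \ref{ittau1}$\Rightarrow$\ref{ittau2}$\Rightarrow$\ref{ittau3}$\Rightarrow$\ref{ittau1}), but the content is identical --- choose two linearly independent solutions at a single $z$ for \ref{ittau1}$\Rightarrow$\ref{ittau2}, subtract quasi-derivatives and exploit the no-common-zero hypothesis to get $h\in AC_{\loc}$ for \ref{ittau2}$\Rightarrow$\ref{ittau3}, and use the identity \eqref{eqn:taudiff} for the remaining step. Your write-up is in fact more explicit than the paper's on two points the paper leaves tacit: the covering argument passing from $hv_k\in AC_{\loc}$ to $h\in AC_{\loc}$, and the observation $s_1^2-s_2^2=h(s_1+s_2)\in L^1_{\loc}$ that keeps the right-hand side of \ref{ittau3} meaningful.
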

\begin{proof}
For the implication $\ref{ittau1}\Longrightarrow\ref{ittau2}$ it suffices to consider two linearly independent solutions of 
$(\tau_j-z)u = 0$, $j=1,2$, for some fixed $z\in\C$. Next, assuming \ref{ittau2}, one first notes that since all 
quasi-derivatives $u_{j,k}' +s_j u_{j,k}$ are locally absolutely continuous, the difference $s_1-s_2$ is locally absolutely continuous on $(a,c)$. Using this fact to evaluate $\tau_1 u_{j,k} =\tau_2 u_{j,k}$ implies 
$(s_1-s_2)'=  s_1^2 -s_2^2 + q_1 - q_2$ a.e.\ on $(a,c)$. The final implication $\ref{ittau3}\Longrightarrow\ref{ittau1}$ 
relies on a straightforward calculation.
\end{proof}

In order to state the next theorem, we use the short-hand notation $\phi_{1,z} \sim \phi_{2,z}$ for the asymptotic relation $\phi_{1,z}(x) = \phi_{2,z}(x) (1 + \oo(1))$  as $|z|\to\infty$ in some specified manner.
In this context we note that in view of Lemma \ref{lemPhiAs}, this holds for one $x\in(a,b_1)\cap(a,b_2)$ 
if and only if it holds for all of them. Moreover, we say the solutions $\theta_{j,z}$, $\phi_{j,z}$ are of {\it growth order at most} $\gamma$ for some $\gamma>0$ if the entire functions
 \begin{align}
 z\mapsto\theta_{j,z}(x), \quad z\mapsto\theta_{j,z}^\qd(x), \quad z\mapsto \phi_{j,z}(x), \quad 
 z\mapsto \phi_{j,z}^\qd(x), 
 \end{align}
 are of growth order at most $\gamma$ for one (and hence for all) $x\in(a,b_j)$, $j=1, 2$. 

\begin{theorem}\label{thmbm}
Suppose that $\theta_{1,z}$, $\theta_{2,z}$, $\phi_{1,z}$, $\phi_{2,z}$ are of growth order at most $\gamma$ for some $\gamma>0$ and $\phi_{1,z} \sim \phi_{2,z}$ as $|z|\to\infty$ along some nonreal rays $R_\ell$, 
$1 \leq \ell \leq N_\gamma$, dissecting the complex plane into sectors of opening angles less than 
$\pi/\gamma$.
Then for each $c\in(a,b_1)\cap(a,b_2)$ the following properties \ref{itbm1}--\,\ref{itbm3} are equivalent:
\begin{enumerate}[label=\,\emph{(}\roman*\emph{)}, ref=(\emph{\roman*}), leftmargin=*, widest=iii, align=left]
 \item\label{itbm1} The differential expressions $\tau_1$ and $\tau_2$ are the same on the interval $(a,c)$ and $W(\phi_{1,z},\phi_{2,z})(a)=0$. 
 \item\label{itbm2} For each $\delta>0$ there is an entire function $f$ of growth order at most $\gamma$ such that
  \begin{align} 
  m_1(z)-m_2(z) = f(z) + \OO\left(\frac{1}{\sqrt{-z} \phi_{1,z}(c)^{2}}\right), 
 \end{align}
 as $|z|\rightarrow\infty$ in the sector $|\im(z)|\geq \delta\,|\re(z)|$. 
 \item\label{itbm3} For each $d\in(a,c)$ there is an entire function $f$ of growth order at most $\gamma$ such that
  \begin{align} 
  m_1(z)-m_2(z) = f(z) + \OO\left(\frac{1}{\phi_{1,z}(d)^{2}}\right),   
 \end{align}
 as $|z|\rightarrow\infty$ along the nonreal rays $R_\ell$, $1 \leq \ell \leq N_\gamma$,.
\end{enumerate}
\end{theorem}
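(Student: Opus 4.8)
The plan is to prove the cyclic chain of implications \ref{itbm1}$\Rightarrow$\ref{itbm2}$\Rightarrow$\ref{itbm3}$\Rightarrow$\ref{itbm1}, relying throughout on Lemma \ref{lemAsymM}, Lemma \ref{lemPhiAs}, Lemma \ref{lem:eqtau}, and the uniqueness theory for Cartwright-type entire functions from Appendix~\ref{sB}. The key bookkeeping device is the identity from Lemma \ref{lemAsymM},
\begin{equation}
 m_j(z) = -\frac{\theta_{j,z}(x)}{\phi_{j,z}(x)} + \OO\!\left(\frac{1}{\sqrt{-z}\,\phi_{j,z}(x)^2}\right),
\end{equation}
which converts statements about $m_1-m_2$ into statements about the solutions $\theta_{j,z}$, $\phi_{j,z}$ near the left endpoint.

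For \ref{itbm1}$\Rightarrow$\ref{itbm2}: if $\tau_1=\tau_2$ on $(a,c)$, then any solution of $(\tau_1-z)u=0$ restricted to $(a,c)$ is a solution of $(\tau_2-z)u=0$, so $\phi_{1,z}$ and $\phi_{2,z}$ both solve the same equation on $(a,c)$; since $W(\phi_{1,z},\phi_{2,z})(a)=0$ and this Wronskian is $z$-independent and constant in $x$ on $(a,c)$, the two solutions are proportional on $(a,c)$, say $\phi_{2,z}=g(z)\phi_{1,z}$ there, with $g$ a ratio of the relevant real entire functions evaluated at a point, hence entire of growth order at most $\gamma$ (and $\E^{g}$-type, using that $\phi$ has no real zeros). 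Then, fixing $d\in(a,c)$ and using Lemma \ref{lemAsymM} at $x=d$ together with $\phi_{2,d}=g(z)\phi_{1,d}$, one computes
\begin{equation}
 m_1(z)-m_2(z) = -\frac{\theta_{1,z}(d)}{\phi_{1,z}(d)} + \frac{\theta_{2,z}(d)}{g(z)\phi_{1,z}(d)} + \OO\!\left(\frac{1}{\sqrt{-z}\,\phi_{1,z}(d)^2}\right),
\end{equation}
and the first two terms combine into an entire function $f$ of growth order at most $\gamma$ (the pole structure cancels because $\phi_{1,z}(d)$ and $g(z)$ have no real zeros on the relevant set and the combination is meromorphic with removable singularities). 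Finally I push $d\uparrow c$: by Lemma \ref{lemPhiAs} the error term at $d$ is $\phi_{1,z}(d)^{-2}\OO((\sqrt{-z})^{-1}) = \phi_{1,z}(c)^{-2}\E^{2(c-d)\sqrt{-z}}\OO((\sqrt{-z})^{-1})$, and letting $d\to c$ absorbs the exponential, giving the bound with $\phi_{1,z}(c)^{-2}$ uniformly in the sector $|\im(z)|\ge\delta|\re(z)|$ as required.

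The implication \ref{itbm2}$\Rightarrow$\ref{itbm3} is essentially trivial: restrict the estimate from the full sector to the finitely many rays $R_\ell$ (which lie inside such a sector for suitable $\delta$), and replace $c$ by $d\in(a,c)$ using again Lemma \ref{lemPhiAs}, since $\phi_{1,z}(c)^{-2}=\phi_{1,z}(d)^{-2}\E^{-2(c-d)\sqrt{-z}}(1+\oo(1))$ and the extra decaying exponential only improves the bound; absorb the extra $(\sqrt{-z})^{-1}$ into the $\OO$. The main work is in \ref{itbm3}$\Rightarrow$\ref{itbm1}. Here I again invoke Lemma \ref{lemAsymM}, now at a point $d\in(a,c)$ and comparing the two operators: subtracting the asymptotic formulas and using the hypothesis gives
\begin{equation}
 \frac{\theta_{2,z}(d)}{\phi_{2,z}(d)} - \frac{\theta_{1,z}(d)}{\phi_{1,z}(d)} = -f(z) + \OO\!\left(\frac{1}{\phi_{1,z}(d)^{2}}\right),
\end{equation}
and since $\phi_{1,z}\sim\phi_{2,z}$ on the rays, I can rewrite the left side over the common denominator $\phi_{1,z}(d)\phi_{2,z}(d)$ as $W(\phi_{1,z},\phi_{2,z})(d)$-type expressions plus lower order, obtaining that a certain entire function (built from the four solution values at $d$ and the Wronskian, all of growth order at most $\gamma$) equals $f(z)$ times $\phi_{1,z}(d)^2$ plus a term that is $\oo(1)$ along the $R_\ell$. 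Because the $R_\ell$ dissect the plane into sectors of opening less than $\pi/\gamma$, a Phragmén--Lindelöf argument (this is exactly the Cartwright/growth-order indicator argument underlying Appendix~\ref{sB} and used in \cite{ET12a}, \cite{KST12}) forces the entire function on the right to be identically $f(z)\phi_{1,z}(d)^2$ and, comparing growth orders, forces the relevant Wronskian-type combination to vanish identically; this yields both $W(\phi_{1,z},\phi_{2,z})(a)=0$ (taking the limit $d\to a$, using that the Wronskian is constant) and that $\phi_{1,z}$, $\phi_{2,z}$ are proportional on $(a,c)$ with an entire proportionality factor. Proportionality of these nontrivial solutions having no common zero in $(a,c)$ then delivers hypothesis \ref{ittau2} of Lemma \ref{lem:eqtau} (for two values $z_1,z_2$), whence $\tau_1=\tau_2$ on $(a,c)$ by that lemma. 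The hard part is precisely this last Phragmén--Lindelöf/Cartwright step: one must extract, from decay along only finitely many rays, the vanishing of an entire function of order at most $\gamma$, which is why the hypothesis that the rays dissect the plane into sectors of opening less than $\pi/\gamma$ is indispensable; this is where the growth-order control from Appendix~\ref{sB} (and the auxiliary Lemma \ref{lem:asypt}) does the real work.
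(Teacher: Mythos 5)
Your overall skeleton (use Lemma \ref{lemAsymM} to convert the $m$-difference into solution data at a point, multiply up, apply Phragm\'en--Lindel\"of along the rays, finish with Lemma \ref{lem:eqtau}) is the right one, and it is essentially the route of \cite{ET12a} that the paper invokes; but two of your three implications have genuine gaps. In \ref{itbm1}$\Rightarrow$\ref{itbm2} you write $\phi_{2,z}=g(z)\phi_{1,z}$ on $(a,c)$ and then claim that $-\theta_{1,z}(d)/\phi_{1,z}(d)+\theta_{2,z}(d)/(g(z)\phi_{1,z}(d))$ is entire because ``the pole structure cancels''. It does not in general: writing $\theta_{2,z}=g(z)^{-1}\theta_{1,z}+\beta(z)\phi_{1,z}$ on $(a,c)$ (forced by $W(\theta_{2,z},\phi_{2,z})=1$), this combination equals $(1-g(z)^2)g(z)^{-2}\,\theta_{1,z}(d)/\phi_{1,z}(d)+\beta(z)/g(z)$, which has poles at the zeros of $z\mapsto\phi_{1,z}(d)$ unless $g^2\equiv 1$. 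You must first use the standing hypothesis $\phi_{1,z}\sim\phi_{2,z}$ together with Phragm\'en--Lindel\"of to conclude $g\equiv 1$ (an entire function of order at most $\gamma$ tending to $1$ along the rays $R_\ell$ is constant), after which $f=\beta$ works. Your subsequent ``push $d\uparrow c$'' step is also not a proof: for each fixed $d<c$ the error carries the factor $\E^{2(c-d)\sqrt{-z}}$, which grows exponentially in the sector, and the implied constants depend on $d$, so letting $d\to c$ does not ``absorb'' it. The correct move is simply to apply Lemma \ref{lemAsymM} at $x=c$ (permitted, since $c\in(a,b_1)\cap(a,b_2)$), using continuity of the solutions up to $c$ to get $\phi_{1,z}(c)=\phi_{2,z}(c)$ and $\theta_{2,z}(c)=\theta_{1,z}(c)+\beta(z)\phi_{1,z}(c)$.

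The more serious gap is in \ref{itbm3}$\Rightarrow$\ref{itbm1}. With both the evaluation point and the error point equal to $d$, multiplying by $\phi_{1,z}(d)\phi_{2,z}(d)$ turns the hypothesis error $\OO(\phi_{1,z}(d)^{-2})$ into $\OO(\phi_{2,z}(d)/\phi_{1,z}(d))=\OO(1)$, not $\oo(1)$, so Phragm\'en--Lindel\"of only makes the entire function $\phi_{1,z}(d)\theta_{2,z}(d)-\phi_{2,z}(d)\theta_{1,z}(d)+f(z)\phi_{1,z}(d)\phi_{2,z}(d)$ a (possibly nonzero) constant. To get vanishing you must decouple the two points: evaluate the solutions at $x\in(a,d)$ while keeping the error bound at $d$, so that Lemma \ref{lemPhiAs} supplies the extra decay $\phi_{j,z}(x)^2/\phi_{1,z}(d)^2\to 0$ (together with \eqref{asympsi} for the Weyl-solution terms); this is precisely why \ref{itbm3} is stated for every $d\in(a,c)$. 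Moreover, even granting the resulting identity $\theta_{2,z}(x)/\phi_{2,z}(x)-\theta_{1,z}(x)/\phi_{1,z}(x)=-f(z)$ for all $x\in(a,c)$, your claim that it forces a Wronskian to vanish and $\phi_{1,z},\phi_{2,z}$ to be proportional is not a valid deduction (note $\phi_1\theta_2-\phi_2\theta_1$ is not $W(\phi_{1,z},\phi_{2,z})$, and its vanishing gives only a relation between the ratios). The missing step is to differentiate in $x$ and use $\tfrac{d}{dx}\big(\theta_{j,z}(x)/\phi_{j,z}(x)\big)=-\phi_{j,z}(x)^{-2}$ (from $W(\theta_{j,z},\phi_{j,z})=1$, $p_j=1$), which yields $\phi_{1,z}(x)^2=\phi_{2,z}(x)^2$ on $(a,c)$ --- exactly the identity the paper extracts --- and only then does $\phi_{1,z}\sim\phi_{2,z}$ upgrade this to $\phi_{1,z}=\phi_{2,z}$ on $(a,c)$, after which Lemma \ref{lem:eqtau} gives $\tau_1=\tau_2$ there and $W(\phi_{1,z},\phi_{2,z})(a)=0$ follows. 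Your \ref{itbm2}$\Rightarrow$\ref{itbm3} step is fine.
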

\begin{proof}
The implications $\ref{itbm1}\Longrightarrow\ref{itbm2}$ and $\ref{itbm2}\Longrightarrow\ref{itbm3}$ literally follow as in \cite{ET12a}. 
For the implication $\ref{itbm3}\Longrightarrow\ref{itbm1}$ one can also follow \cite{ET12a} step by step to obtain the identity
$\phi_{1,z}^2 = \phi_{2,z}^2$ on $(a,c)$. By the assumption $\phi_{1,z} \sim \phi_{2,z}$, one even 
obtains that $\phi_{1,z} = \phi_{2,z}$ on $(a,c)$ and hence the claim follows from Lemma \ref{lem:eqtau}.
\end{proof}

As a simple consequence, one obtains the following inverse uniqueness result as in 
\cite[Corollary\ 4.3]{ET12a}.

\begin{corollary}\label{corbm}
Suppose that $\theta_{1,z}$, $\theta_{2,z}$, $\phi_{1,z}$, $\phi_{2,z}$ are of growth order at most $\gamma$ for some $\gamma>0$ and  $\phi_{1,z} \sim \phi_{2,z}$ as $|z|\to\infty$ along some nonreal rays $R_\ell$, 
$1 \leq \ell \leq N_\gamma$, dissecting the complex plane into sectors of opening angles less than 
$\pi/\gamma$. If 
\begin{align}\label{eqncorbm}
 m_1(z) - m_2(z) = f(z), \quad z\in\C\backslash\R,
\end{align}
for some entire function $f$ of growth order at most $\gamma$, then $S_1 = S_2$.
\end{corollary}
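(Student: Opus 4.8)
The plan is to feed the hypothesis into Theorem~\ref{thmbm} ``at every scale'' and then to reconcile the data at the right endpoint. First I would observe that the hypotheses of Theorem~\ref{thmbm} are in force and that \eqref{eqncorbm} is precisely item~\ref{itbm3} of that theorem with vanishing remainder: one and the same entire function $f$ of growth order at most $\gamma$ serves for every $c\in(a,b_1)\cap(a,b_2)$ and every $d\in(a,c)$, the error term being identically $0$. Hence the equivalent property \ref{itbm1} holds for every such $c$, so that $\tau_1$ and $\tau_2$ agree on $(a,c)$ and $W(\phi_{1,z},\phi_{2,z})(a)=0$. Since $\tau_1=\tau_2$ on $(a,c)$, the constant Wronskian $W(\phi_{1,z},\phi_{2,z})$ vanishes identically there, so $\phi_{1,z}=\omega(z)\phi_{2,z}$ on $(a,c)$ with $\omega(z)=-W(\phi_{1,z},\theta_{2,z})(x_0)$ entire of growth order at most $\gamma$ (any fixed $x_0\in(a,c)$); from $\phi_{1,z}\sim\phi_{2,z}$ one gets $\omega\to1$ along the rays $R_\ell$, whence $\omega\equiv1$ by a Phragm\'en--Lindel\"of argument over the sectors they cut out, i.e.\ $\phi_{1,z}=\phi_{2,z}$ on $(a,c)$---this being exactly the reasoning already embedded in the proof of Theorem~\ref{thmbm}. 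Letting $c\uparrow b_\ast:=\min\{b_1,b_2\}$, I conclude that $\tau_1=\tau_2$ and $\phi_{1,z}=\phi_{2,z}$ on all of $(a,b_\ast)$.

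Next I would pass to the spectral measures. Since $f=m_1-m_2$ and each $m_j$ satisfies $m_j(z)=\overline{m_j(\overline z)}$ (cf.\ \eqref{eqprop::mpsi}), the entire function $f$ is real-valued on $\R$, so that $\im(f(\lambda+i\eps))\to0$ as $\eps\downarrow0$ locally uniformly in $\lambda$; the Stieltjes--Livsic inversion formula \eqref{defrho} then forces $\mu_1=\mu_2=:\mu$. Writing $\cF_j$ for the spectral transformation of $S_j$ associated with $\phi_{j,z}$ (a unitary map of $L^2((a,b_j);dx)$ onto $L^2(\R;d\mu)$, see Section~\ref{s2}), set $U:=\cF_1^{-1}\cF_2$; this is unitary from $L^2((a,b_2);dx)$ onto $L^2((a,b_1);dx)$ and satisfies $S_1=US_2U^{-1}$. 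For any $g$ vanishing a.e.\ near $b_\ast$---the functions $g$ of this kind being dense in $L^2_0((a,b_\ast);dx)$---the defining identity \eqref{eqnhat} together with $\phi_{1,\lambda}=\phi_{2,\lambda}$ on $(a,b_\ast)$ gives $\cF_2 g=\cF_1 g$, whence $Ug=\cF_1^{-1}\cF_1 g=g$; by continuity, $U$ therefore acts as the canonical inclusion on all of $L^2_0((a,b_\ast);dx)$.

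It remains to see that $b_1=b_2$. By the symmetry of the hypotheses in the indices $1$ and $2$, I may assume $b_1\le b_2$, so $b_\ast=b_1$. If $b_1<b_2$, the previous paragraph shows that the unitary $U$ maps the \emph{proper} closed subspace $L^2_0((a,b_1);dx)$ of $L^2((a,b_2);dx)$ \emph{onto} its entire codomain $L^2((a,b_1);dx)$, which is impossible for a bijection. Hence $b_1=b_2$, $U=\mathrm{id}$, and $S_1=S_2$.

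I expect the one genuinely delicate point to be this last reconciliation of the two right endpoints; the analytic substance---recovering $\tau_j$ near $a$ from the leading high-energy asymptotics of $m_1-m_2$---has already been carried out inside Theorem~\ref{thmbm} and its supporting Lemmas~\ref{lemAsymM} and \ref{lemPhiAs}, so that, as the paper puts it, this is a \emph{simple consequence} of Theorem~\ref{thmbm}. One minor technical check worth performing along the way is that the spectral-transformation machinery of Section~\ref{s2} carries over verbatim to the present setting---which it does, since $\phi_{j,z}$ is a real entire solution as in Theorem~\ref{thmREFS}.
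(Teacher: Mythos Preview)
Your argument is correct and follows the route the paper intends: the paper offers no proof beyond the remark that the result is obtained ``as in \cite[Corollary~4.3]{ET12a}'', and your reconstruction---apply Theorem~\ref{thmbm} at every scale to get $\tau_1=\tau_2$ and $\phi_{1,z}=\phi_{2,z}$ on $(a,b_\ast)$, deduce $\mu_1=\mu_2$ from the Stieltjes--Livsic formula since $f$ is real entire, and then use the unitarity of $\cF_1^{-1}\cF_2$ to force $b_1=b_2$ and $U=\mathrm{id}$---is precisely that argument. The only step worth tightening is cosmetic: when you invoke the Phragm\'en--Lindel\"of step to get $\omega\equiv1$, you rightly note this is already contained in the proof of Theorem~\ref{thmbm} (the passage from $\phi_{1,z}^2=\phi_{2,z}^2$ to $\phi_{1,z}=\phi_{2,z}$), so you could simply quote that conclusion rather than rederive it.
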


In the case when the operators $S_1$ and $S_2$ have purely discrete spectra with finite convergence exponent, that is,  
\begin{align}
 \inf\biggr\lbrace \omega \geq0 \,\biggr|\, \sum_{\lambda\in \sigma(S_j)} 
 \frac{1}{1+|\lambda|^{\omega}}<\infty \biggr\rbrace < \infty, \quad j = 1,\, 2,
\end{align}
it is possible to refine this result (cf.\ \cite[Corollary\ 5.1]{ET12a}).

\begin{corollary}\label{corbmdis}
Suppose that $\phi_{1,z}$, $\phi_{2,z}$ are of growth order at most $\gamma$ for some $\gamma>0$ and 
$\phi_{1,z} \sim \phi_{2,z}$ as $|z|\to\infty$ along some nonreal rays $R_\ell$, $1 \leq \ell \leq N_\gamma$, dissecting the complex plane into sectors of opening angles less than $\pi/\gamma$.
 Furthermore, assume that $S_1$ and $S_2$ have purely discrete spectra with convergence exponent 
 at most $\gamma$.
 If 
\be
 m_1(z) - m_2(z) = f(z), \quad z\in\C\backslash\R,
\ee
for some entire function $f$, then $S_1 = S_2$.
\end{corollary}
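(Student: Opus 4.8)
The plan is to reduce Corollary \ref{corbmdis} to Corollary \ref{corbm} by showing that, under the present hypotheses, the entire function $f = m_1 - m_2$ --- and, for the given $\phi_{j,z}$, also the solutions $\theta_{j,z}$ completing the fundamental systems --- automatically have growth order at most $\gamma$. Once this is established, Corollary \ref{corbm} applies verbatim and yields $S_1 = S_2$. The execution parallels \cite[Corollary\ 5.1]{ET12a}.

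First I would use the discreteness of the spectra. Since $S_1$ and $S_2$ have purely discrete spectra, the singular Weyl--Titchmarsh--Kodaira functions $m_1$, $m_2$ are meromorphic on $\C$ with poles precisely at the eigenvalues, and the associated spectral measures are pure point, $\mu_j = \sum_{\lambda\in\sigma(S_j)} \mu_j(\{\lambda\})\,\delta_\lambda$, with each weight $\mu_j(\{\lambda\}) = -\res_{z=\lambda} m_j(z) > 0$. Because $f = m_1 - m_2$ is entire, all poles and residues must cancel, so $\sigma(S_1) = \sigma(S_2) =: \Sigma$ and $\mu_1 = \mu_2$. In particular the convergence--exponent hypothesis now applies to the single set $\Sigma$, so the canonical product $B$ over $\Sigma$ (of the genus determined by the convergence exponent) is entire of growth order at most $\gamma$ with zero set exactly $\Sigma$, and $B\,m_j$ is entire.

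The heart of the matter is to show that $B\,m_j$ --- hence $B f = B\,m_1 - B\,m_2$ and, via a minimum--modulus estimate for $B$, also $f = Bf/B$ --- has growth order at most $\gamma$, and likewise that the $\theta_{j,z}$ are of order at most $\gamma$. Here I would combine: (i) the high-energy asymptotics of Lemma \ref{lemAsymM}, $m_j(z) = -\theta_{j,z}(x_0)/\phi_{j,z}(x_0) + \OO\bigl(1/(\sqrt{-z}\,\phi_{j,z}(x_0)^{2})\bigr)$ uniformly in every sector $|\im(z)| \ge \delta\,|\re(z)|$; (ii) the hypothesis that $z \mapsto \phi_{j,z}(x_0)$ is entire of order at most $\gamma$; (iii) a canonical-product / Mittag-Leffler representation of the Green's function and the Weyl solution $\psi_{j,z}$ in terms of the eigenvalues $\Sigma$ and eigenfunctions, which is available precisely because the spectrum is discrete and which renders $B\,\psi_{j,z}(x_0)$, and thence (using $W(\psi_{j,z},\phi_{j,z}) = W(\theta_{j,z},\phi_{j,z}) = 1$) the solutions $\theta_{j,z}$, entire of controlled growth. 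A Phragmén--Lindelöf argument on the sectors cut out by the rays $R_\ell$ --- whose opening angles are less than $\pi/\gamma$, and along which Lemma \ref{lemAsymM} together with $\phi_{1,z}\sim\phi_{2,z}$ provides bounds that are polynomial in $\sqrt{-z}$ up to order-$\gamma$ factors --- then upgrades these sectorial estimates to genuine order-$\gamma$ bounds, the minimum--modulus estimates being used to absorb the denominators coming from the zeros of $\phi_{j,z}(x_0)$ and of $B$.

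Having obtained that $f$ has growth order at most $\gamma$ and that $\theta_{1,z}$, $\theta_{2,z}$ are of order at most $\gamma$, I invoke Corollary \ref{corbm} to conclude $S_1 = S_2$. I expect the main obstacle to be the third paragraph: setting up the canonical-product representation of $\psi_{j,z}$ that forces the $\theta$-solutions to have order $\le\gamma$, and carrying out the Phragmén--Lindelöf/minimum-modulus bookkeeping that converts the sectorial asymptotics of Lemma \ref{lemAsymM} into a global order-$\gamma$ bound on $f$. The cancellation of the spectra and the final appeal to Corollary \ref{corbm} are then routine.
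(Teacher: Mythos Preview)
Your overall strategy---reduce to Corollary~\ref{corbm} by manufacturing the missing growth bounds on $f$ and on the $\theta$-solutions, using the discreteness of the spectra and the canonical product $B$ over $\Sigma$---is the same route the paper indicates via the reference to \cite[Corollary~5.1]{ET12a}, and the ingredients you list (eigenfunction/Mittag--Leffler representation of $\psi_{j,z}$, minimum-modulus estimates, Phragm\'en--Lindel\"of) are the right ones.

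There is, however, one point that needs to be corrected rather than merely sharpened. You write that the canonical-product representation ``renders $B\,\psi_{j,z}(x_0)$, and thence \ldots\ the solutions $\theta_{j,z}$, entire of controlled growth.'' This cannot be right as stated: the $\theta_{j,z}$ entering the definition of the given $m_j$ are only determined up to $\theta_{j,z}\mapsto\theta_{j,z}-h(z)\phi_{j,z}$ for an arbitrary real entire $h$ (Remark~\ref{remSFS}), so their growth order is not constrained by the data, and neither is that of the given $f=m_1-m_2$. What one actually does (this is the step from \cite{KST12}/\cite{ET12a}) is \emph{replace} $\theta_{j,z}$ by a new choice $\tilde{\theta}_{j,z}$ of order at most $\gamma$---such a choice exists precisely because the spectrum is discrete with convergence exponent $\le\gamma$ and $\phi_{j,z}$ has order $\le\gamma$; it is built from $\chi_{j,z}:=B(z)\psi_{j,z}$. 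The new $\tilde m_j$ differ from $m_j$ by entire functions, so $\tilde f:=\tilde m_1-\tilde m_2$ is still entire. Then $B\tilde m_j\,\phi_{j,z}(x)=\chi_{j,z}(x)-B(z)\tilde\theta_{j,z}(x)$ is entire of order $\le\gamma$, and dividing by $\phi_{j,z}(x)$ (the quotient being entire) gives $B\tilde m_j$ of order $\le\gamma$ via the minimum-modulus theorem; hence $B\tilde f$ and finally $\tilde f=(B\tilde f)/B$ have order $\le\gamma$. At that point Corollary~\ref{corbm} applies with $\tilde\theta_{j,z},\phi_{j,z},\tilde f$ and yields $S_1=S_2$.

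So: keep your architecture, but reframe step~(iii) as \emph{constructing} suitable $\tilde\theta_{j,z}$ (equivalently $\chi_{j,z}$) of order $\le\gamma$, not as proving a growth bound on the given ones. The Phragm\'en--Lindel\"of argument you sketch is then not needed for $f$ itself; the order bound on $\tilde f$ follows directly from the entire-quotient/minimum-modulus argument above.
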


The lack of a growth restriction on the entire function $f$ in Corollary \ref{corbmdis} immediately 
yields a corresponding uniqueness result for the spectral measure. Closely following the proof of 
\cite[Theorem\ 5.2]{ET12a} one obtains the next result.

\begin{theorem}\label{thmSpectFuncDisc}
Suppose that $\phi_{1,z}$, $\phi_{2,z}$ are of growth order at most $\gamma$ for some $\gamma>0$ 
and $\phi_{1,z} \sim\phi_{2,z}$ as $|z|\rightarrow\infty$ along some nonreal rays $R_\ell$, 
$1 \leq \ell \leq N_\gamma$, dissecting the complex 
plane into sectors of opening angles less than $\pi/\gamma$. Furthermore, assume that 
$S_1$ and $S_2$ have purely discrete spectra with convergence exponent at most $\gamma$. 
If the corresponding spectral measures $\mu_1$ and $\mu_2$ are equal, then $S_1 = S_2$.
\end{theorem}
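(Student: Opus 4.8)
The plan is to reduce everything to Corollary \ref{corbmdis}. The point is that, besides the growth-order hypothesis on $\phi_{1,z}$, $\phi_{2,z}$, the asymptotic relation $\phi_{1,z}\sim\phi_{2,z}$ along the rays $R_\ell$, and the finite convergence exponent of $\sigma(S_1)$, $\sigma(S_2)$ --- all of which are already assumed in Theorem \ref{thmSpectFuncDisc} --- Corollary \ref{corbmdis} asks only that $m_1-m_2$ coincide on $\C\backslash\R$ with \emph{some} entire function, with no growth restriction at all on that function. So the whole task is to deduce from $\mu_1=\mu_2$ that $m_1-m_2$ extends to an entire function; the theorem then follows by invoking Corollary \ref{corbmdis}.

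First I would record the shape of the spectral measures. For $j=1,2$, the transformation \eqref{eqnhat} associated with $\phi_{j,z}$ extends to a unitary map $\mathcal{F}_j$ from $L^2((a,b_j);dx)$ onto $L^2(\R;d\mu_j)$ that intertwines $S_j$ with multiplication by the independent variable. Multiplication by $\lambda$ in $L^2(\R;d\mu_j)$ has spectrum $\supp(\mu_j)$, so the hypothesis that $\sigma(S_j)$ be purely discrete forces $\mu_j$ to be purely atomic, $\mu_j=\sum_k \gamma_{j,k}\,\delta_{\lambda_{j,k}}$ with $\gamma_{j,k}>0$, and its atoms $\{\lambda_{j,k}\}$ are exactly the (simple) eigenvalues of $S_j$. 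From $\mu_1=\mu_2$ one thus immediately reads off $\sigma(S_1)=\sigma(S_2)$ together with equality of all the masses $\gamma_{1,k}=\gamma_{2,k}$.

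Next, because each eigenvalue is an isolated, simple point of $\sigma(S_j)$, the function $m_j$, which is analytic on $\rho(S_j)=\C\backslash\sigma(S_j)$, has a simple pole at each of them, and the Stieltjes--Livsic inversion formula \eqref{defrho} identifies its residue at $\lambda_{j,k}$ with $-\gamma_{j,k}$ (integrating $\pi^{-1}\im m_j(\,\cdot\,+i\eps)$ against a Poisson kernel concentrated near the pole recovers the mass of the corresponding atom). Hence $m_1-m_2$ has vanishing principal part at every point of the common spectrum $\sigma(S_1)=\sigma(S_2)$ and, being analytic off that set, extends to an entire function $f$; applying Corollary \ref{corbmdis} with this $f$ yields $S_1=S_2$. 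This is precisely the scheme of \cite[Theorem\ 5.2]{ET12a}, to which one may refer for the details.

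The \textbf{main obstacle} is the identification of the pole data of $m_j$ with the atomic part of $\mu_j$: one must know that $m_j$ has only \emph{simple} poles on the real line and that the residue at each eigenvalue equals minus the corresponding spectral mass. For a regular left endpoint on the half-line this is immediate from the Nevanlinna--Herglotz representation \eqref{eqnSThergMrep}, but in the present general, possibly non-Herglotz situation it must be extracted from \eqref{defrho} together with the simplicity of the eigenvalues of Sturm--Liouville operators with separated boundary conditions, as in \cite{ET12a}. Everything downstream of that identification is a direct appeal to Corollary \ref{corbmdis}.
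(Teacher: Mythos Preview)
Your proposal is correct and follows essentially the same approach as the paper, which itself does not spell out a proof but refers to \cite[Theorem\ 5.2]{ET12a}: from $\mu_1=\mu_2$ one deduces that $m_1-m_2$ extends to an entire function (matching simple poles and residues at the common discrete spectrum), and then one invokes Corollary \ref{corbmdis}. Your identification of the residue--mass correspondence as the one nontrivial ingredient, and your reference to \cite{ET12a} for it, is exactly in line with the paper's treatment.
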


Of course, this theorem overlaps with Corollary \ref{corInvUniqSchr} to some extent. 
However, the assumptions on the real entire solutions $\phi_{j,z}$, $j=1,2$ are of a different nature. 

As another application we are also able to prove a generalization of Hochstadt--Lieberman-type uniqueness results which can be obtained along the lines of \cite[Theorem\ 5.3]{ET12a}.

\begin{theorem}\label{thmHL}
Suppose that the operator $S_1$ has purely discrete spectrum with finite convergence exponent 
$\gamma>0$. Let $\phi_{1,z}$ and $\chi_{1,z}$ be real entire solutions of growth order at most 
$\gamma$ which lie in the domain of $S_1$ near $a$ and $b_1$, respectively, and suppose that 
there is a $c\in (a,b_1)\cap(a,b_2)$ such that
\be\label{eqeahl}
\frac{\chi_{1,z}(c)}{\phi_{1,z}(c)}=\OO(1)
\ee
as $|z|\rightarrow\infty$ along some nonreal rays $R_\ell$, 
$1 \leq \ell \leq N_\gamma$, dissecting the complex plane into sectors of 
opening angles less than $\pi/\gamma$.
 If the operator $S_2$ is isospectral to $S_1$, $\tau_1$ and $\tau_2$ are the same on $(a,c)$,  
 and $W(\phi_{1,z},\phi_{2,z})(a) = 0$, then $S_1 = S_2$.
\end{theorem}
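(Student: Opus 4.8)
The plan is to follow the Hochstadt--Lieberman-type argument of \cite[Theorem~5.3]{ET12a}, splitting the interval at $c$: the spectrum of $S_j$ is encoded in an entire characteristic function, the agreement of $\tau_1$ and $\tau_2$ on $(a,c)$ controls the ``left half'' of that function at the point $c$, and the size condition \eqref{eqeahl} feeds a Phragm\'en--Lindel\"of argument that upgrades $\sigma(S_1)=\sigma(S_2)$ into an identity of the right-half Weyl--Titchmarsh--Kodaira functions at $c$, after which the Borg--Marchenko result of this section identifies the operators on $(c,b_j)$. First I would note that, since $S_2$ is isospectral to $S_1$, its spectrum is purely discrete with convergence exponent at most $\gamma$, so there is no essential spectrum from either endpoint and, by Theorem \ref{thmREFS} (and its analog at $b_2$), there are nontrivial real entire solutions $\phi_{2,z}$, $\chi_{2,z}$ of $(\tau_2-z)u=0$ lying in $\dom{S_2}$ near $a$ and near $b_2$; arguing as in Section \ref{s4} (cf.\ Lemma \ref{lemLC}) one may take them of growth order at most $\gamma$. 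Because $\tau_1$ and $\tau_2$ agree on $(a,c)$ and $W(\phi_{1,z},\phi_{2,z})(a)=0$, on that interval $\phi_{1,z}$ and $\phi_{2,z}$ satisfy the same equation and obey the same boundary condition at $a$, so after multiplying $\phi_{2,z}$ by a suitable entire zero-free factor we may assume $\phi_{1,z}=\phi_{2,z}$ on $(a,c)$; then $\phi_{1,z}\sim\phi_{2,z}$ by Lemma \ref{lemPhiAs}, and by Lemma \ref{lem:eqtau} one has $\phi_{1,z}^\qd(c)-\phi_{2,z}^\qd(c)=\sigma_0\,\phi_{1,z}(c)$ for some $z$-independent constant $\sigma_0$.

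For $j=1,2$ the eigenvalues of $S_j$ are exactly the (simple) zeros of the entire function $\omega_j(z)=W(\phi_{j,z},\chi_{j,z})$, which is of growth order at most $\gamma$. Since $\sigma(S_1)=\sigma(S_2)$, Hadamard's factorization theorem gives $\omega_1=\E^{g}\omega_2$ with $g$ a polynomial of degree at most $\gamma$, and after the harmless replacement $\chi_{2,z}\mapsto\E^{g(z)}\chi_{2,z}$ (which preserves the growth order and reality of $\chi_{2,z}$) we may assume $\omega_1=\omega_2$. Evaluating the Wronskians at $x=c$ and substituting $\phi_{1,z}(c)=\phi_{2,z}(c)$ and $\phi_{2,z}^\qd(c)=\phi_{1,z}^\qd(c)-\sigma_0\phi_{1,z}(c)$, the identity $\omega_1=\omega_2$ rearranges into
\[
 \phi_{1,z}(c)\,B(z)=\phi_{1,z}^\qd(c)\,A(z),\qquad A(z)=\chi_{1,z}(c)-\chi_{2,z}(c),
\]
with $B(z)=\chi_{1,z}^\qd(c)-\chi_{2,z}^\qd(c)-\sigma_0\chi_{2,z}(c)$. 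Because a nontrivial solution of $(\tau_1-z)u=0$ cannot vanish together with its quasi-derivative at $c$, the entire functions $z\mapsto\phi_{1,z}(c)$ and $z\mapsto\phi_{1,z}^\qd(c)$ have no common zero, so $F(z):=A(z)/\phi_{1,z}(c)=B(z)/\phi_{1,z}^\qd(c)$ extends to an entire function of growth order at most $\gamma$.

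Next I would show $F\equiv0$. Writing $\chi_{j,z}=\lambda(z)\psi_{j,z}$ with $\psi_{j,z}$ the Weyl solution of \eqref{defpsi} (the normalization $\omega_1=\omega_2$ forces the same entire factor $\lambda=-\omega_1$ for both $j$, as $W(\phi_{j,z},\psi_{j,z})=-1$), one has $F=\lambda(z)\big(\psi_{1,z}(c)/\phi_{1,z}(c)-\psi_{2,z}(c)/\phi_{2,z}(c)\big)$. Now \eqref{asympsi} gives $\lambda(z)=2\sqrt{-z}\,\phi_{1,z}(c)\chi_{1,z}(c)(1+\oo(1))$, which is $\OO\big(\sqrt{|z|}\,\phi_{1,z}(c)^{2}\big)$ along the rays $R_\ell$ by \eqref{eqeahl}; on the other hand, the explicit leading term $\tfrac{1}{2\sqrt{-z}\,\phi_{j,z}(c)^{2}}$ in the asymptotics of $\psi_{j,z}(c)/\phi_{j,z}(c)$ (Lemma \ref{lemAsymM}) is the \emph{same} for $j=1,2$ because $\phi_{1,z}=\phi_{2,z}$ on $(a,c)$, so the bracket is $\OO\big(1/(z\,\phi_{1,z}(c)^{2})\big)$ along the $R_\ell$; hence $F(z)=\OO(1/\sqrt{|z|})\to0$ along each $R_\ell$. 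Since $F$ is entire of growth order at most $\gamma$ and the $R_\ell$ cut the plane into sectors of opening less than $\pi/\gamma$, Phragm\'en--Lindel\"of yields that $F$ is bounded in each sector, hence constant, hence identically $0$. Thus $\chi_{1,z}(c)=\chi_{2,z}(c)$ and $\chi_{1,z}^\qd(c)-\chi_{2,z}^\qd(c)=\sigma_0\chi_{1,z}(c)$ for all $z$, so the functions $\chi_{j,z}^\qd(c)/\chi_{j,z}(c)$, i.e.\ the Weyl--Titchmarsh--Kodaira functions at $c$ of the restrictions of $S_j$ to $(c,b_j)$, differ only by the constant $\sigma_0$, and $\chi_{1,z}\sim\chi_{2,z}$. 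Applying Corollary \ref{corbm} to these right pieces, with the regular point $c$ in the role of the left endpoint (so its growth-order and $\sim$ hypotheses hold automatically), gives that $\tau_1$ and $\tau_2$ coincide on $(c,b_1)=(c,b_2)$ with the same boundary condition at $b$. Combined with the first paragraph this is precisely $S_1=S_2$.

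The step I expect to be the main obstacle is the Phragm\'en--Lindel\"of argument: one must combine the merely $\OO(1)$ bound in \eqref{eqeahl} with the order-$\le\gamma$ control on $F$ in a way that genuinely uses the angular opening $\pi/\gamma$ (and cannot be relaxed), and one must extract the sharper $\oo(1)$ decay of $F$ along the $R_\ell$ from a careful two-term analysis of the Weyl-solution asymptotics of Lemma \ref{lemAsymM} (here the precise leading-order statement \eqref{asympsi}, not merely \eqref{asymM}, is essential). A secondary difficulty, absent in the classical smooth setting, is that every passage between coincidence of coefficients, of solutions, and of Cauchy data must be mediated by Lemma \ref{lem:eqtau} and Theorem \ref{thmbm} rather than by pointwise ODE uniqueness --- in particular because $s_1$ and $s_2$ need not agree where $\tau_1=\tau_2$, so quasi-derivatives carry the extra $\sigma_0$-term throughout --- and one also has to justify, from the convergence exponent of the common spectrum alone, that $\phi_{2,z}$ and $\chi_{2,z}$ can indeed be chosen of growth order at most $\gamma$.
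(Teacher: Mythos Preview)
Your proposal is correct and follows precisely the route the paper indicates: the paper gives no detailed proof of this theorem, stating only that it ``can be obtained along the lines of \cite[Theorem~5.3]{ET12a},'' and your sketch is a faithful rendering of that argument---normalizing so that $\phi_{1,z}=\phi_{2,z}$ on $(a,c)$ and $\omega_1=\omega_2$, extracting the entire quotient $F$, killing it by Phragm\'en--Lindel\"of via the asymptotics of Lemma~\ref{lemAsymM}, and finishing with Corollary~\ref{corbm} on $(c,b_j)$. Your handling of the distributional subtlety (the constant $\sigma_0=(s_1-s_2)(c)$ that offsets the two quasi-derivatives, justified through Lemma~\ref{lem:eqtau}) is exactly the extra bookkeeping the present setting requires over \cite{ET12a}.

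One small correction: your appeal to Lemma~\ref{lemLC} for the growth order of $\phi_{2,z}$ and $\chi_{2,z}$ is not quite on target---that lemma yields only the Cartwright class (hence order at most one), not order at most $\gamma$. The cleaner route for $\phi_{2,z}$ is the one you already use implicitly: since $\phi_{2,z}=\phi_{1,z}$ on $(a,c)$, the data $\phi_{2,z}(c)$ and $\phi_{2,z}^{[1]}(c)$ inherit order $\le\gamma$ directly from $\phi_{1,z}$. For $\chi_{2,z}(c)$, the required order bound on $F$ is obtained not from $\chi_{2,z}$ itself but from the identity $F(z)\,\phi_{1,z}(c)^2=-\omega_1(z)\bigl(G_{1,z}(c,c)-G_{2,z}(c,c)\bigr)$ (with $G_{j,z}(c,c)=\phi_{j,z}(c)\psi_{j,z}(c)$ the Green's function diagonal), since $\omega_1$ has order $\le\gamma$ and each $G_{j,z}(c,c)$ is Herglotz and hence of polynomial growth in closed nonreal sectors; this is how \cite{ET12a} circumvents the issue you flag at the end.
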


We note that by \eqref{IOphias}, the growth of $z\mapsto\phi_{1,z}(c)$ will increase as $c$ increases while (by reflection) the growth of $z\mapsto \chi_{1,z}(c)$
will decrease. In particular, if \eqref{eqeahl} holds for some $c\in(a,b_1)\cap(a,b_2)$ it will hold for any $c'>c$ as well.


\begin{appendix}

\section{High-Energy Asymptotics in the General Case}    \lb{sA}

The aim of this appendix is to provide some results concerning high-energy asymptotics of regular  
Weyl--Titchmarsh functions. Therefore, throughout Appendix \ref{sA}, let $\tau$ be a Sturm--Liouville expression of the form \eqref{2.2}, satisfying Hypothesis \ref{h2.1}, which is regular at $a$ and $S$ be some self-adjoint realization with separated boundary conditions. 
 In this case (cf.\ Theorem \ref{thmLC}), we may choose a real entire fundamental system of solutions $\theta_z$, $\phi_z$ 
of $(\tau-z)u=0$ with the initial conditions
\begin{align}\label{phithetareg}
 \theta_z(a)=\phi_z^\qd(a)=\cos(\varphi_a), \quad -\theta_z^\qd(a)=\phi_z(a)=\sin(\varphi_a),  
 \end{align}
 for some suitable $\varphi_a\in[0,\pi)$. The corresponding Weyl--Titchmarsh function is a Nevanlinna--Herglotz function and takes on the form 
\begin{align}\label{weylmalpha}
m(z) =  \frac{\psi_z(a) \sin(\varphi_a)  + \psi_z^\qd(a)\cos(\varphi_a)}{\psi_z(a) \cos(\varphi_a)
- \psi_z^\qd(a) \sin(\varphi_a)}, \quad z\in\C\backslash\R
\end{align} 
in view of \eqref{defpsi}. 
 First we show that the asymptotic behavior of this function depends only on the left endpoint and
can be computed in terms of the asymptotics of our fundamental system of solutions.

\begin{lemma}\label{lem:asmmg}
For each $x\in(a,b)$, the Weyl--Titchmarsh function satisfies 
\be
m(z) = -\frac{\theta_z(x)}{\phi_z(x)} + \oo\left(\frac{z}{\phi_z(x)^2}\right)     \lb{A.3} 
\ee
as $|z|\to\infty$ in any sector $|\im(z)| \geq \delta\, |\re(z)|$, with $\delta>0$.
\end{lemma}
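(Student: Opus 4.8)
The plan is to express both sides of \eqref{A.3} through the Weyl solution $\psi_z = \theta_z + m(z)\phi_z$ and exploit the fact that $\psi_z$ is small near the left endpoint $a$ relative to $\phi_z$, so that the ratio $\theta_z/\phi_z$ is governed by $-m(z)$ up to a controlled error measured by $\psi_z/\phi_z$. Concretely, from $\psi_z = \theta_z + m(z)\phi_z$ and the normalization $W(\theta_z,\phi_z) = 1$ one gets the pointwise identity
\begin{align}
m(z) + \frac{\theta_z(x)}{\phi_z(x)} = \frac{\psi_z(x)}{\phi_z(x)}, \quad x\in(a,b),
\end{align}
valid wherever $\phi_z(x)\neq 0$. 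Hence the lemma is equivalent to the claim that $\psi_z(x)/\phi_z(x) = \oo\big(z/\phi_z(x)^2\big)$, i.e.\ that $\psi_z(x)\phi_z(x) = \oo(z)$, as $|z|\to\infty$ in the stated sectors. This is the heart of the matter and reduces everything to high-energy asymptotics of the two solutions.

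First I would record the standard high-energy behavior of solutions of $(\tau - z)u = 0$ near the regular endpoint $a$: writing $\zeta = \sqrt{-z}$ (principal branch, so $\Re\zeta \to \infty$ along the sectors in question), one has from the Volterra integral equation for the system \eqref{eqn:system}, together with a Gronwall/successive-approximation estimate and the absorption of the $L^1_{\loc}$ coefficients $p^{-1}, q, s$ near $a$, that
\begin{align}
\phi_z(x) = \frac{C_1(x)}{\zeta}\,\E^{\zeta (x-a)}\big(1 + \oo(1)\big), \quad
\phi_z^\qd(x) = C_1(x)\,\E^{\zeta(x-a)}\big(1 + \oo(1)\big)
\end{align}
for a nonvanishing function $C_1$ (and the analogous relations for $\theta_z$ with an $\E^{-\zeta(x-a)}$-type leading term, governed by the initial data \eqref{phithetareg}); the precise form can be quoted from \cite{EGNT12a}. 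Then, since $\psi_z$ lies in $\Lr$ near $b$ and satisfies the boundary condition at $b$ (if any), it is — up to a $z$-dependent scalar — the recessive solution, and one shows $\psi_z(x) = \OO\big(\zeta^{-1}\E^{-\zeta(x-a)}\big)$ near $a$; combining these gives $\psi_z(x)\phi_z(x) = \OO(\zeta^{-2}) = \OO(|z|^{-1})$, which is certainly $\oo(z)$. The $x$-independence of the leading term on the right of \eqref{A.3} then follows because $m(z)$ on the left is manifestly $x$-independent, while the error term genuinely depends on $x$ through $\phi_z(x)$.

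The main obstacle is making the estimate on $\psi_z$ rigorous under the general Hypothesis \ref{h2.1}, where the potential coefficients are only distributional: one cannot directly invoke the classical Schrödinger-case asymptotics and must instead work with the first quasi-derivative $f^\qd = p[f' + sf]$ and the first-order system \eqref{eqn:system}, controlling the solution operator uniformly in the relevant sectors. A clean way to sidestep re-deriving $\psi_z$-asymptotics from scratch is to note that it suffices to prove the bound $\psi_z(x)\phi_z(x)\to 0$ (not even $\OO(1/z)$), and this follows from the identity $\psi_z(x)\phi_z(x) = \int_x^b \big(|\psi_z|^2 \,\Im(\ldots)\big)/\Im(m(z))\,\ldots$-type computations, or more simply from the product representation of $\psi_z$ via the Wronskian $W(\psi_z,\phi_z) = 1$ together with the fact that $\|\psi_z\|_{L^2((x,b))}^2 = \Im(m(z))/\Im(z)$ stays bounded while $\phi_z$ grows exponentially on $(a,x)$. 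I would assemble the argument in that order: (1) the algebraic identity above; (2) solution asymptotics near $a$ from \cite{EGNT12a}; (3) the recessiveness/$L^2$-bound on $\psi_z$ near $b$; (4) multiply and conclude.
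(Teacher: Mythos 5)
Your opening reduction is exactly the paper's: from $\psi_z=\theta_z+m(z)\phi_z$ one gets $m(z)=-\theta_z(x)/\phi_z(x)+\phi_z(x)\psi_z(x)/\phi_z(x)^2$, so the lemma amounts to showing $\phi_z(x)\psi_z(x)=\oo(z)$ in the stated sectors. The gap is in how you propose to prove that bound. Lemma \ref{lem:asmmg} is stated under Hypothesis \ref{h2.1}, i.e.\ for general $p,r$ with only $p^{-1},q,r,s\in L^1_{\loc}$, and in that generality the asymptotics you want to quote, $\phi_z(x)\sim C_1(x)(\sqrt{-z})^{-1}\E^{\sqrt{-z}(x-a)}$, are simply not available (and not of that form: already for smooth $p\ne r$ the exponent would involve $\int_a^x\sqrt{r/p}\,dt$, and for merely locally integrable coefficients even a WKB-type statement requires extra structure). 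The paper derives such asymptotics only in Appendix \ref{sB}, under the restriction $p=r=1$; they cannot be imported into Appendix \ref{sA}. The estimate on $\psi_z$ is equally problematic: the claim that the recessive-at-$b$ solution satisfies $\psi_z(x)=\OO\big((\sqrt{-z})^{-1}\E^{-\sqrt{-z}(x-a)}\big)$ near $a$ is precisely the kind of statement one normally extracts \emph{from} the asymptotics of $m(z)$ via $\psi_z=\theta_z+m\phi_z$ (cf.\ Lemma \ref{lemAsymM}), so your argument risks circularity; and your fallback (``$\|\psi_z\|_{L^2}^2=\Im m(z)/\Im(z)$ stays bounded while $\phi_z$ grows exponentially'') again leans on unproved exponential growth and, in any case, an $L^2$ bound on $\psi_z$ does not yield a pointwise bound at $x$ without a further, $z$-uniform argument.

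The missing idea is that no solution asymptotics are needed at all: $\phi_z(x)\psi_z(x)=G_z(x,x)$ is the diagonal Green's function, and
\begin{align}
\im\big(G_z(x,x)\big)=\im(z)\int_a^b |G_z(x,y)|^2\,r(y)dy
=\im(z)\int_\R \Big|\frac{\phi_\lambda(x)}{\lambda-z}\Big|^2 d\mu(\lambda),
\end{align}
so $z\mapsto G_z(x,x)$ is a Nevanlinna--Herglotz function with representation (no linear term) driven by the measure $\phi_\lambda(x)^2 d\mu(\lambda)$. Any such function is $\oo(z)$ as $|z|\to\infty$ in a sector $|\im(z)|\ge\delta|\re(z)|$, which is exactly the bound $\phi_z(x)\psi_z(x)=\oo(z)$ you need, valid under Hypothesis \ref{h2.1} with no regularity beyond $\tau$ being regular at $a$. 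This is the paper's proof; if you want to salvage your write-up, replace your items (2)--(4) by this Herglotz argument and keep only your algebraic identity.
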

\begin{proof}
By \cite[Lemma\ 9.6]{EGNT12a} one concludes that  
\begin{align}
\begin{split} 
 \im \left( G_z(x,x) \right) & = \im(z) \int_a^b |G_z(x,y)|^2 \, r(y) dy 
= \im(z) \int_\R \left|\frac{\phi_\lam(x)}{\lam-z}\right|^2 d\mu(\lam)    \\
& = \im \int_\R \left( \frac{1}{\lam-z} - \frac{\lam}{1+\lam^2}\right) \phi_\lam(x)^2 \, d\mu(\lam), 
\quad  z\in\C\backslash\R,
\end{split} 
\end{align}
which shows that (cf.\ \cite[Corollary\ 9.8]{EGNT12a}) the diagonal of the Green's function is given by 
\begin{equation} 
G_z(x,x) = \re \left(G_i(x,x)\right) + \int_\R \left( \frac{1}{\lam-z} - \frac{\lam}{1+\lam^2}\right) \phi_\lam(x)^2 
\, d\mu(\lam),\quad z\in \C\backslash\R.
\end{equation} 
In particular, 
\begin{equation} 
G_z(x,x) = \phi_z(x) \psi_z(x)=\oo(z)
\end{equation} 
as $|z|\to\infty$ in any sector $|\im(z)| \geq \delta\, |\re(z)|$, with $\delta > 0$. Thus, dividing \eqref{defpsi} by 
$\phi_z(x)$ and solving for $m(z)$ yields \eqref{A.3}. 
\end{proof}

One notes that since $m(z)=\oo(z)$ as well as $1/m(z)=\OO(z)$ as $|z|\to\infty$ in sectors as above,
and $\phi_z(x)$ must grow super-polynomially along the imaginary axis, as all zeros lie on the real line,
the above formula indeed captures the leading asymptotics of $m$.

\begin{theorem}\label{A.2}
If $\varphi_a\ne0$, then $\int_\R (1+|\lambda|)^{-1}d\mu(\lambda)<\infty$ and 
 \begin{align}\label{eqnSThergMreg}
 m(z) = -\cot(\varphi_a) + \int_\R \frac{1}{\lambda-z} d\mu(\lambda),\quad z\in\C\backslash\R.
\end{align}
In particular, $m(z) \rightarrow -\cot(\varphi_a)$ as $|z|\rightarrow\infty$ along any non-real ray. 
Otherwise, if $\varphi_a=0$, then $\int_\R (1+|\lambda|)^{-1}d\mu(\lambda)=\infty$ 
and the above simplification is not possible.
\end{theorem}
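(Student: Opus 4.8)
The plan is to read off everything from the high-energy asymptotics in Lemma \ref{lem:asmmg} combined with the explicit form \eqref{weylmalpha} of $m$ at the regular endpoint $a$, together with the Nevanlinna--Herglotz representation of $m$. First I would recall that, because $\tau$ is regular at $a$ and $S$ has separated boundary conditions, $m$ is a Nevanlinna--Herglotz function, so it admits a representation $m(z) = c_0 + c_1 z + \int_\R\big(\frac{1}{\lambda-z} - \frac{\lambda}{1+\lambda^2}\big)d\mu(\lambda)$ with $c_1\ge 0$; the point is to show $c_1 = 0$ always and that $\int_\R(1+|\lambda|)^{-1}d\mu(\lambda)<\infty$ precisely when $\varphi_a\ne 0$, in which case the representation collapses to \eqref{eqnSThergMreg}.

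The key observation is the boundedness statement hidden in Lemma \ref{lem:asmmg}: evaluating \eqref{weylmalpha} directly at $x=a$ using the initial conditions \eqref{phithetareg} gives, in the case $\varphi_a\ne 0$, that $m(z)$ stays bounded as $|z|\to\infty$ along nonreal rays (the denominator $\psi_z(a)\cos(\varphi_a) - \psi_z^\qd(a)\sin(\varphi_a)$ does not degenerate relative to the numerator because the boundary condition at $a$ is the non-Dirichlet one $\varphi_a\ne 0$); more precisely $m(z)\to -\cot(\varphi_a)$. A Nevanlinna--Herglotz function that is bounded along a nonreal ray must have $c_1=0$ and, by the standard dominated-convergence argument applied to $\im m(iy)/y \to c_1 = 0$ and to $\re m(iy)$, must actually satisfy $\int_\R(1+|\lambda|)^{-1}d\mu(\lambda)<\infty$; then the subtracted term $\frac{\lambda}{1+\lambda^2}$ can be absorbed into the constant, yielding \eqref{eqnSThergMreg} with the constant identified as $-\cot(\varphi_a)$ by taking $|z|\to\infty$ (the integral tends to $0$ there by dominated convergence). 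Conversely, if $\varphi_a=0$ (the Dirichlet boundary condition at $a$), then from \eqref{weylmalpha} one has $m(z) = \psi_z^\qd(a)/\psi_z(a)$, and here Lemma \ref{lem:asmmg} with $x\to a$, or directly the known half-line asymptotics $m(z)\sim -\sqrt{-z}$ in the regular Dirichlet case, forces $m(z)$ to be unbounded along nonreal rays; an unbounded Herglotz function of this growth cannot have $\int_\R(1+|\lambda|)^{-1}d\mu(\lambda)<\infty$, since finiteness of that integral is exactly equivalent to boundedness of $m$ along the imaginary axis. Hence the dichotomy.

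Concretely I would structure it as: (1) write the Herglotz representation of $m$ and note $c_1\ge 0$; (2) in the case $\varphi_a\ne 0$, use \eqref{weylmalpha} evaluated at $a$ (or equivalently Lemma \ref{lem:asmmg} letting $x\downarrow a$) to get $m(z)\to -\cot(\varphi_a)$ along nonreal rays, deduce $c_1=0$ and $\int_\R(1+|\lambda|)^{-1}d\mu<\infty$ by the usual $\im m(iy)/y$ and $\re m(iy)$ limits, rewrite the representation as \eqref{eqnSThergMreg}, and identify the constant by letting $|z|\to\infty$; (3) in the case $\varphi_a=0$, observe $m(z)=\psi_z^\qd(a)/\psi_z(a)$ and show this is unbounded along nonreal rays, so the equivalence ``$\int_\R(1+|\lambda|)^{-1}d\mu<\infty$ iff $m$ bounded along $i\R_+$'' forces $\int_\R(1+|\lambda|)^{-1}d\mu=\infty$.

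The main obstacle is step (3): rigorously showing that in the Dirichlet case $m(z)$ is genuinely unbounded (not merely that our formula does not obviously show boundedness). The cleanest route is to invoke the high-energy asymptotics already available — Lemma \ref{lem:asmmg} gives $m(z) = -\theta_z(x)/\phi_z(x) + \oo(z/\phi_z(x)^2)$, and for $\varphi_a=0$ one has $\phi_z(a)=0$, $\phi_z^\qd(a)=1$, so near $a$ the quotient $\theta_z(x)/\phi_z(x)$ blows up like $1/(x-a)$ times a $z$-dependent factor; combined with the fact (noted right after Lemma \ref{lem:asmmg}) that $\phi_z(x)$ grows super-polynomially along the imaginary axis, one extracts that $m(iy)\to\infty$. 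Alternatively, and perhaps more transparently, one can argue purely on the Herglotz side: $\int_\R(1+|\lambda|)^{-1}d\mu(\lambda)<\infty$ would force $m$ bounded on every nonreal ray, and then by step (2)'s reasoning $m$ would converge to a finite constant; but $1/m(z) = \OO(z)$ (also noted after Lemma \ref{lem:asmmg}) together with $m$ being the Dirichlet $m$-function — whose reciprocal is the Neumann $m$-function, which is itself Herglotz and hence cannot tend to $0$ faster than $1/m$ allows unless $m\to\infty$ — gives a contradiction. I would present whichever of these is shortest given the lemmas already in hand, most likely the first.
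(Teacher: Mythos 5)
There is a genuine gap, in fact two, and both occur exactly where the general hypotheses of Appendix \ref{sA} (arbitrary $p$, $r$, $s$ as in Hypothesis \ref{h2.1}, only regularity at $a$) matter. First, your opening claim that \eqref{weylmalpha} ``directly'' gives $m(z)\to-\cot(\varphi_a)$ along nonreal rays for $\varphi_a\neq0$ is not an argument: it is equivalent to the assertion that $\psi_z(a)/\psi_z^\qd(a)\to0$, i.e.\ that the Dirichlet $m$-function blows up, which is precisely the nontrivial content of the theorem, not an available input. Neither of your proposed justifications works here: letting $x\downarrow a$ in Lemma \ref{lem:asmmg} is illegitimate because the error term $\oo\big(z/\phi_z(x)^2\big)$ is only valid for each fixed $x$ and degenerates as $x\downarrow a$ (in the Dirichlet case $\phi_z(a)=0$), while the $-\sqrt{-z}$ asymptotics is proved in the paper only under Hypothesis \ref{GHSchroe} ($p=r=1$, Theorem \ref{highe2}), whereas Theorem \ref{A.2} is needed, and is used (e.g.\ in the proof of Theorem \ref{thmdBuniqS}), in the general case where no such quantitative asymptotics are established. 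Second, the Herglotz-theoretic equivalence you rely on, namely that $\int_\R(1+|\lambda|)^{-1}d\mu(\lambda)<\infty$ is ``exactly equivalent'' to boundedness of $m$ along the imaginary axis, is false: boundedness, or even convergence, of $m(iy)$ only kills the linear term. For instance, $d\mu(\lambda)=\big(\log(2+\lambda^2)\big)^{-1}d\lambda$ yields $\im(m(iy))\to0$ and $m(iy)$ convergent, yet $\int_\R(1+|\lambda|)^{-1}d\mu(\lambda)=\infty$ (and $m\equiv i$ is an even cruder counterexample). The correct criterion is $\int_1^\infty \im(m(iy))\,y^{-1}dy<\infty$, which requires a decay rate for $\im(m(iy))$ that you have not established and cannot extract from Lemma \ref{lem:asmmg} alone. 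Only the implication you use in the case $\varphi_a=0$ (finite integral and no linear term imply boundedness and convergence) is valid, but there your proof of unboundedness again leans on the unavailable asymptotics, and your alternative ``Neumann $m$-function'' argument is circular.

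For contrast, the paper's proof avoids ray asymptotics altogether: it first uses Lemma \ref{lem:asmmg} to reduce, without loss of generality, to a problem that is regular at $b$ with a Dirichlet condition there; the dichotomy for $\int_\R(1+|\lambda|)^{-1}d\mu(\lambda)$ then follows from the identification of $\varphi_a=0$ with the Friedrichs extension together with the abstract criterion of \cite{GT}; finally, the constant in \eqref{eqnSThergMreg} is pinned down not by a limiting argument but by the M\"obius relation \eqref{weylmalpha} between the $m$-functions for different boundary conditions at $a$ (if the constant for $\varphi_a=\tfrac{\pi}{2}$ were nonzero, the choice $\varphi_a=-\arctan\big(c(\tfrac{\pi}{2})\big)$ would force an infinite constant, a contradiction), and the statement $m(z)\to-\cot(\varphi_a)$ is a \emph{consequence} of the representation, not its source. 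To salvage your approach you would have to prove blow-up of the Dirichlet $m$-function with a rate (an analogue of Theorem \ref{highe2}) under Hypothesis \ref{h2.1}, which is essentially new work that the paper deliberately sidesteps.
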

\begin{proof}
Since the asymptotic properties of $m$ depend only on the left endpoint $a$, one can,  without loss of generality, assume that $b$ is regular as well
and choose a Dirichlet boundary condition at this endpoint (cf.\ also Lemma\ 9.20 in \cite{Te09}).
Since by \cite[Corollary\ 10.20]{EGNT12a}  the Friedrichs  extension is associated with $\varphi_a=0$, \cite[Theorem\ 4.3]{GT} implies
$\int_\R (1+|\lambda|)^{-1}d\mu(\lambda)<\infty$ if and only if $\varphi_a\ne0$. 
Hence \eqref{eqnSThergMreg} holds with
some finite constant $c(\varphi_a)$ in place of $\cot(\varphi_a)$. Moreover, one notes that 
$c(\frac{\pi}{2})\not=0$ would imply the contradiction  $c(\varphi_a)=\infty$ for 
$\varphi_a = -\arctan(c(\frac{\pi}{2}))$ by \eqref{weylmalpha}. Hence, $c(\frac{\pi}{2})=0$ and thus
also $c(\varphi_a)= \cot(\varphi_a)$ by \eqref{weylmalpha}.
\end{proof} 

\section{High-Energy Asymptotics in the Special Case $p=r=1$}    \label{sB}

 In this appendix we will derive more precise high-energy asymptotics of regular  
 Weyl--Titchmarsh functions in the case of Schr\"odinger operators with distributional potentials 
 (i.e., under Hypothesis \ref{GHSchroe}). For Schr\"odinger operators with locally integrable potentials, 
 these facts are well-known (see, e.g., \cite[Lemma\ 9.19]{Te09}).
 
 Therefore, throughout Appendix \ref{sB}, let $\tau$ be a Sturm--Liouville differential expression of the form \eqref{2.2}, satisfying Hypothesis \ref{GHSchroe}, which is regular at $a$ and $S$ be some self-adjoint realization with separated boundary conditions. 
 As in Appendix \ref{sA}, we choose $\theta_z$, $\phi_z$ to be the real entire fundamental system of $(\tau-z)u=0$ with the initial conditions \ref{phithetareg} for some suitable $\varphi_a\in[0,\pi)$.
 
\begin{lemma}\label{lem:asypt}
 If $\varphi_a = 0$, then for each $x\in(a,b)$,
\begin{align}
 \begin{pmatrix} \phi_z(x) \\ \phi_z^\qd(x) \end{pmatrix} &=  \frac{1}{2\sqrt{-z}}\begin{pmatrix} 1\\ \sqrt{-z} \end{pmatrix} \E^{\sqrt{-z} (x-a)} \big(1 + \oo(1)\big),    \lb{B.1} \\
 \begin{pmatrix} \theta_z(x) \\ \theta_z^\qd(x) \end{pmatrix} &= \frac{1}{2} \begin{pmatrix} 1 \\ \sqrt{-z} \end{pmatrix} \E^{\sqrt{-z} (x-a)} \big(1 + \oo(1)\big),     \lb{B.2} 
\end{align}
as $|z|\rightarrow\infty$ along any nonreal ray.
 \end{lemma}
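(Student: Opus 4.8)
The plan is to follow the classical route for high-energy asymptotics of Schr\"odinger-type operators (as in, e.g., \cite[Lemma\ 9.19]{Te09} for the locally integrable case), carried out at the level of the quasi-derivative so as to accommodate the merely $L^1_{\loc}$ coefficient $s$. Write $k=\sqrt{-z}$ with the branch cut along $[0,\infty)$, so that $\re(k)>0$ and, along any fixed nonreal ray, $\re(k)\geq c\,|k|\to\infty$. By \eqref{eqn:system}, $(\tau-z)u=0$ is equivalent to the first-order system $Y'=\left(\begin{smallmatrix}-s&1\\ q+k^2&s\end{smallmatrix}\right)Y$ for $Y=(u,u^\qd)^\top$. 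Set $\sigma(x)=\int_a^x s(t)\,dt$, which is locally absolutely continuous with $\sigma(a)=0$. First I would apply the gauge transformation $Y=\mathrm{diag}\big(\E^{-\sigma},\E^{\sigma}\big)\widetilde Y$, which removes $s$ from the diagonal and turns the system into $\widetilde Y'=\left(\begin{smallmatrix}0&\E^{2\sigma}\\ (q+k^2)\E^{-2\sigma}&0\end{smallmatrix}\right)\widetilde Y$, and then diagonalize the $q$-free leading part by the $x$-dependent matrix $P(x)=\left(\begin{smallmatrix}\E^{\sigma}&\E^{\sigma}\\ k\E^{-\sigma}&-k\E^{-\sigma}\end{smallmatrix}\right)$, whose columns are eigenvectors to the eigenvalues $\pm k$. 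The key computational point is that $P^{-1}P'$ has vanishing diagonal, so that writing $\widetilde Y=PZ$ yields
\[
 Z' = \begin{pmatrix} k+\tfrac{q}{2k} & \tfrac{q}{2k}-s \\ -\tfrac{q}{2k}-s & -k-\tfrac{q}{2k} \end{pmatrix} Z .
\]

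Next I would peel off the dominant exponential from both components at once: with $\kappa(x)=k+q(x)/(2k)$, set $Z_\pm(x)=\E^{\int_a^x\kappa}\,W_\pm(x)$, reducing the system to $W_+'=gW_-$ and $W_-'=hW_+-2\kappa W_-$ with $g=q/(2k)-s$, $h=-q/(2k)-s$ in $L^1_{\loc}((a,b);dx)$. The initial data are read off from \eqref{phithetareg} with $\varphi_a=0$ after unwinding the transformations: for $\phi_z$ one gets $\big(W_+(a),W_-(a)\big)=\big(\tfrac{1}{2k},-\tfrac{1}{2k}\big)$, and for $\theta_z$ one gets $\big(W_+(a),W_-(a)\big)=\big(\tfrac12,\tfrac12\big)$. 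Passing to the Volterra integral equations and using that $\re(\kappa)\geq c\,|k|$ bounds the kernel $\E^{-2\int_\xi^x\kappa}$ by $1$, a Gronwall estimate with $\int_a^x(|g|+|h|)\leq 2\|s\|_{L^1((a,x);dx)}+\OO(1/|k|)$ shows that $W_\pm$ are uniformly bounded in $k$ on each $[a,x]$ (of size $\OO(1/|k|)$ for $\phi_z$ and $\OO(1)$ for $\theta_z$). Then, for each fixed $x\in(a,b)$, the homogeneous contribution $\E^{-2\int_a^x\kappa}W_-(a)$ decays exponentially and the Volterra integral for $W_-$ tends to $0$ by dominated convergence (its integrand is dominated by $|h|$ times the uniform bound and tends to $0$ pointwise), so $W_-(x)=\oo(1)$ relative to that bound; feeding this back into the equation for $W_+$ and applying dominated convergence once more gives $W_+(x)=W_+(a)\,(1+\oo(1))$. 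Hence $W_+(x)\pm W_-(x)=W_+(a)\,(1+\oo(1))$, and since $\int_a^x q=\OO(1)$ one has $\E^{\int_a^x\kappa}=\E^{k(x-a)}(1+\oo(1))$; undoing the substitutions via $\phi_z=Y_1=Z_++Z_-$ and $\phi_z^\qd=Y_2=k(Z_+-Z_-)$ (the conjugating factors $\E^{\pm\sigma}$ cancelling in $Y=\mathrm{diag}(\E^{-\sigma},\E^{\sigma})PZ$) produces \eqref{B.1}, and the same computation with the $\theta_z$ initial data produces \eqref{B.2}, with $k=\sqrt{-z}$ throughout.

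The main obstacle is precisely that $s$ enters the transformed system at order $\OO(1)$ and not $\OO(1/|k|)$, so that norm estimates by themselves yield only boundedness of $W_\pm$ and not the convergence needed for a $(1+\oo(1))$-statement. Overcoming this rests on two structural facts that must be made explicit: after the diagonalization $s$ sits only in off-diagonal position, hence couples to the \emph{decaying} mode and is suppressed by the uniformly bounded kernel $\E^{-2\int_\xi^x\kappa}$, which together with dominated convergence forces $W_-\to0$; and the conjugating exponentials $\E^{\pm\sigma}$ introduced to kill the diagonal cancel identically upon returning to $(\phi_z,\phi_z^\qd)$, so no trace of $\sigma$ survives in the leading term -- consistent with the elementary case of constant $s$, where $\tau$ reduces to $-u''+s^2u$ and $\phi_z(x)=\big(k^2+s^2\big)^{-1/2}\sinh\!\big(\sqrt{k^2+s^2}\,(x-a)\big)$. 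One should also note that the error terms are only locally uniform in $x$ away from $a$ (near $a$ the homogeneous term $\E^{-2\int_a^x\kappa}W_-(a)$ need not be small when $W_-(a)\neq0$), which is exactly the pointwise-in-$x$ assertion of the lemma. An alternative and slightly shorter route is to observe that $v:=\widetilde Y_1$ solves the classical Sturm--Liouville equation $-(\E^{-2\sigma}v')'+q\E^{-2\sigma}v=k^2\E^{-2\sigma}v$ with coefficients in $L^1_{\loc}((a,b);dx)$ and $\E^{\pm2\sigma}$ locally bounded, so that standard Liouville-type asymptotics apply; but the direct iteration above is self-contained and parallels the argument of \cite[Lemma\ 2.4]{ET12a}.
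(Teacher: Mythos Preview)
Your argument is correct. Both your proof and the paper's share the same backbone (variation-of-constants Volterra equation, uniform Gronwall bound, dominated convergence to pass to the limit), but they expose the key algebraic fact in different ways. The paper proceeds more directly: it rescales to $\Phi_z(x)=\big(\sqrt{-z}\,\phi_z(x),\,\phi_z^\qd(x)\big)^\top\E^{-\sqrt{-z}(x-a)}$, writes a single Volterra equation with kernel built from $E_z(x)=\E^{-2\sqrt{-z}x}$ and the perturbation $\left(\begin{smallmatrix}s&0\\ q/\sqrt{-z}&s\end{smallmatrix}\right)$, and observes that as $|z|\to\infty$ (so $E_z\to0$ and $q/\sqrt{-z}\to0$) the limiting Volterra equation has the \emph{constant} solution $\tfrac12(1,1)^\top$ precisely because this vector lies in the nullspace of the $s$-coefficient matrix $\left(\begin{smallmatrix}-1&1\\-1&1\end{smallmatrix}\right)$. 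Your gauge transformation plus diagonalization is the change of basis that makes this nullspace observation explicit: after your transformations $s$ sits purely off the diagonal, hence couples the growing mode $W_+$ only to the decaying mode $W_-$, which is exactly the statement that $(1,1)^\top$ (the growing direction) is annihilated by the $s$-term. The paper's route is shorter once the nullspace remark is noticed; your route makes the mechanism (and the cancellation of the $\E^{\pm\sigma}$ factors when returning to $(\phi_z,\phi_z^\qd)$) more transparent and is the natural template if one wants to push the expansion to higher order or transplant it to related first-order systems.
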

\begin{proof}
We set $a=0$ for notational simplicity.
In this case our system \eqref{eqn:system} for the Dirichlet solution $\phi_z$ (where $\vphi_a=0$) reads
\begin{equation} 
 \begin{pmatrix} \phi_z \\ \phi_z^\qd \end{pmatrix}' 
  = \begin{pmatrix}  - s & 1 \\ q - z & s  \end{pmatrix}  \begin{pmatrix} \phi_z \\ \phi_z^\qd  \end{pmatrix}, \quad
  \begin{pmatrix} \phi_z(0) \\ \phi_z^\qd(0) \end{pmatrix} = \begin{pmatrix} 0\\ 1 \end{pmatrix}, \quad z\in\C.
\end{equation} 
Introducing for each $z\in\C\backslash\R$ the functions 
\begin{equation} 
\Phi_z(x) = \begin{pmatrix} \sqrt{-z} \phi_z(x) \E^{-\sqrt{-z} x}\\  \phi_z^\qd(x) \E^{-\sqrt{-z} x}\end{pmatrix} \quad\text{and}\quad 
E_z(x) = \E^{-2\sqrt{-z} x}, \quad x\in(0,b), 
\end{equation}
the system can equivalently be written as
\begin{align}
\Phi_z(x) &= \frac{1}{2} \begin{pmatrix} 1 - E_z(x)\\1+E_z(x) \end{pmatrix}   \\
& \quad +\frac{1}{2} \int_0^x
\begin{pmatrix} -1-E_z(x-y) & 1- E_z(x-y)\\ -1+ E_z(x-y) & 1+ E_z(x-y) \end{pmatrix} \begin{pmatrix} s(y) & 0\\ q(y)/\sqrt{-z} & s(y) \end{pmatrix} \Phi_z(y) dy.   \no
\end{align}
Consequently, $\Phi_z(x) = \Phi_\infty(x) +\oo(1)$ as $z\to\infty$ along any nonreal ray, where $\Phi_\infty$ solves
\begin{equation} 
\Phi_\infty(x) = \frac{1}{2} \begin{pmatrix} 1 \\ 1\end{pmatrix} +  \frac{1}{2} \int_0^x
  s(y) \begin{pmatrix}-1& 1\\ -1 & 1\end{pmatrix} \Phi_\infty(y) dy, \quad x\in(0,b).    \lb{B.6} 
\end{equation}
This integral equation has a unique solution which can easily be seen to be  
$\Phi_\infty(x)=\frac{1}{2} (1\;1)^\top$ upon insertion into \eqref{B.6} and noticing that 
$\frac{1}{2} (1\;1)^\top$ lies in the nullspace of $\left(\begin{smallmatrix}-1& 1\\ -1 & 1\end{smallmatrix}\right)$. 
This proves \eqref{B.1}. Relation \eqref{B.2} follows analogously.
\end{proof}

As a simple consequence, one obtains the high-energy asymptotics of the corresponding (regular) 
Weyl--Titchmarsh functions in the case of Schr\"odinger operators with distributional potentials. 

\begin{theorem}\label{highe2}
The Weyl--Titchmarsh function satisfies the asymptotic relation 
\begin{align}
 m(z) = \begin{cases} -\sqrt{-z} + o(z^{1/2}), & \varphi_a = 0, \\
                       -\cot(\varphi_a) + \frac{1}{\sin^{2}(\vphi_a) \sqrt{-z}} + o(z^{-1/2}), & \varphi_a\not=0, 
         \end{cases}
 \end{align}
 as $|z|\rightarrow\infty$ along any nonreal ray.
 \end{theorem}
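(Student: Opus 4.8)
The plan is to compute the leading high-energy behaviour of the Weyl--Titchmarsh function $m$ from its explicit expression \eqref{weylmalpha} in terms of the Weyl solution $\psi_z$, using the refined asymptotics of the fundamental system $\theta_z,\phi_z$ established in Lemma \ref{lem:asypt}. First I would record the asymptotics of $\psi_z$ and its quasi-derivative at the left endpoint $a$: combining \eqref{asympsi} (or rather its regular analogue, since $a$ is regular here) with Lemma \ref{lem:asypt} gives $\psi_z(a)$ and $\psi_z^\qd(a)$ up to leading order. Concretely, in the case $\varphi_a=0$ one has $\psi_z(a)=\sin(\varphi_a)+\cdots=0$ to leading order while $\psi_z^\qd(a)$ picks up the $\sqrt{-z}$ factor; plugging these into \eqref{weylmalpha} and expanding the quotient yields $m(z)=\psi_z^\qd(a)/\psi_z(a)$-type behaviour, which after using the precise prefactors in \eqref{B.1}, \eqref{B.2} produces $-\sqrt{-z}+o(z^{1/2})$.

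For the case $\varphi_a\neq 0$ I would be more careful, since here the leading term of $m$ is the \emph{constant} $-\cot(\varphi_a)$ (consistent with Theorem \ref{A.2}) and the assertion is about the \emph{next} term, of order $z^{-1/2}$. The approach: write $\psi_z=\theta_z+m(z)\phi_z$, so that $\psi_z(a)=\cos(\varphi_a)+m(z)\sin(\varphi_a)$ and $\psi_z^\qd(a)=-\sin(\varphi_a)+m(z)\cos(\varphi_a)$, exactly (these are the initial conditions \eqref{phithetareg}). This is circular as stated, so instead I would use Lemma \ref{lem:asmmg}, i.e. $m(z)=-\theta_z(x)/\phi_z(x)+o(z/\phi_z(x)^2)$, together with the $\varphi_a=0$-type asymptotics of a \emph{Dirichlet} solution. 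The cleaner route is: the quasi-derivative Green's-function identity shows $\phi_z(x)\psi_z(x)=G_z(x,x)=o(z)$, and more precisely, from Lemma \ref{lem:asypt} applied to the Dirichlet solution $\tilde\phi_z$ (the solution with $\tilde\phi_z(a)=0$, $\tilde\phi_z^\qd(a)=1$) and the relation $m(z)=\tilde\psi_z^\qd(a)/\tilde\psi_z(a)\cdot(\cdots)$, one extracts the $\frac{1}{\sin^2(\varphi_a)\sqrt{-z}}$ correction. Alternatively, and perhaps most transparently, I would use the Riccati-type equation: $m$ satisfies a first-order ODE in the spectral parameter is not available, but $\psi_z$ satisfies $(\tau-z)\psi_z=0$, and writing $w_z=\psi_z^\qd/\psi_z$ near $a$ one gets a Riccati equation $w_z'=-2sw_z+q-z-w_z^2$ whose solution decaying appropriately has $w_z(a)=-\sqrt{-z}+s(a^+)+o(1)$ by the same fixed-point argument as in Lemma \ref{lem:asypt}; then $m(z)$ is a Möbius function of $w_z(a)$ with coefficients $\sin(\varphi_a),\cos(\varphi_a)$, and expanding that Möbius transformation in powers of $1/\sqrt{-z}$ gives both cases at once.

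The main obstacle I anticipate is bookkeeping the \emph{second-order} term in the $\varphi_a\neq0$ case: the leading term $-\cot(\varphi_a)$ comes from a cancellation in the Möbius transformation, so to see the $z^{-1/2}$ coefficient $1/\sin^2(\varphi_a)$ correctly I need the $o(1)$ error in $w_z(a)\sim -\sqrt{-z}$ to actually be $O(1)$ (controlled), not merely $o(\sqrt{-z})$ — this requires a slightly sharper version of the fixed-point estimate than what Lemma \ref{lem:asypt} literally states, or a careful application of it at a point $x$ and then letting the error be absorbed. I would handle this by expanding
\begin{align}
m(z)=\frac{\sin(\varphi_a)+w_z(a)\cos(\varphi_a)}{\cos(\varphi_a)-w_z(a)\sin(\varphi_a)}
=-\cot(\varphi_a)-\frac{1}{\sin^2(\varphi_a)\,w_z(a)}+O(w_z(a)^{-2}),
\end{align}
valid once $|w_z(a)|\to\infty$, and then substituting $w_z(a)=-\sqrt{-z}\,(1+o(1))$, which immediately yields the stated $-\cot(\varphi_a)+\frac{1}{\sin^2(\varphi_a)\sqrt{-z}}+o(z^{-1/2})$; the $\varphi_a=0$ case is the degenerate endpoint of the same computation where the denominator is $-w_z(a)\sin(\varphi_a)\to$ the $\cos(\varphi_a)$ term and $m(z)\sim w_z(a)\sim-\sqrt{-z}$. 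The remaining routine points are justifying that the relevant branch of $\sqrt{-z}$ is the one with positive real part along nonreal rays (so that the Weyl solution is the decaying one) and that all $o(\cdot)$ estimates are uniform in the stated sectors, both of which follow from Lemma \ref{lem:asypt} and the standard characterization of $\psi_z$.
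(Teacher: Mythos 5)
Your proposal is correct and, in its final form, is essentially the paper's own (two-line) argument: the Dirichlet case $\varphi_a=0$ follows from Lemma \ref{lem:asmmg} combined with Lemma \ref{lem:asypt}, and the case $\varphi_a\neq 0$ is obtained by expanding the linear-fractional relation \eqref{weylmalpha} (your M\"obius transformation of $w_z(a)=\psi_z^\qd(a)/\psi_z(a)$, which is precisely the Dirichlet Weyl--Titchmarsh function) in powers of $1/\sqrt{-z}$. The obstacle you anticipated is not actually there: as your own final expansion shows, $w_z(a)=-\sqrt{-z}\,(1+\oo(1))$ already yields the $\frac{1}{\sin^{2}(\vphi_a)\sqrt{-z}}$ correction, so no $\OO(1)$ control of the error is required (nor the pointwise value $s(a^+)$ from your Riccati sketch, which need not exist for $s\in L^1_{\loc}$), and the Riccati detour can be dropped in favor of the route through Lemmas \ref{lem:asmmg} and \ref{lem:asypt} that you already invoke.
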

\begin{proof}
The case $\vphi_a=0$ is immediate from Lemma \ref{lem:asmmg} and Lemma \ref{lem:asypt}.
The remaining case then follows from \eqref{weylmalpha}.
\end{proof}

\end{appendix}

\medskip
\noindent

\noindent 
{\bf Acknowledgments.} We are indebted to Aleksey Kostenko for helpful hints with respect to the literature.



\begin{thebibliography}{XX}
%
\bi{AHM05} S.\ Albeverio, R.\ Hryniv, and Ya.\ Mykytyuk, {\it Inverse spectral problems for Sturm--Liouville operators in impedance form}, J. Funct. Anal. {\bf 222}, 143--177 
(2005).
%
\bi{AHM08} S.\ Albeverio, R.\ Hryniv, and Ya.\ Mykytyuk, {\it On spectra of non-self-adjoint 
Sturm--Liouville operators}, Sel. Math. New Ser. {\bf 13}, 571--599 (2008). 
%
\bi{BR05} A.\ Ben Amor and C.\ Remling,  {\it Direct and inverse spectral theory of one-dimensional 
Schr\"odinger operators with measures}, Integral Eq. Operator Th. {\bf 52}, 395--417 (2005).
%
\bi{An88} L.\ Andersson, {\it Inverse eigenvalue problems for a Sturm--Liouville equation in 
impedance form}, Inverse Probl. {\bf 4}, 929--971 (1988).
%
\bi{An88a} L.\ Andersson, {\it Inverse eigenvalue problems with discontinuous coefficients}, 
Inverse Probl.\ {\bf 4}, 353--397 (1988).
%
\bi{Be01} C.\ Bennewitz, {\it A proof of the local Borg--Marchenko theorem}, 
Commun. Math. Phys. {\bf 218}, 131--132 (2001). 
%
\bibitem{Be03} C.\ Bennewitz, {\it A Paley--Wiener theorem with applications to inverse spectral 
theory}, in {\it Advances in differential equations and mathematical physics}, Contemp.  Math. {\bf 327}, Amer. Math. Soc., Providence, RI, 2003, pp.\ 21--31.
%
\bibitem{Be04} C.\ Bennewitz, {\it On the spectral problem associated with the Camassa--Holm 
equation}, J. Nonlinear Math. Phys. {\bf 11} 422--434 (2004).
%
\bibitem{BBW08} C.\ Bennewitz, B.\ M.\ Brown, and R.\ Weikard, {\it Inverse spectral and scattering 
theory for the half-line left-definite Sturm--Liouville problem}, SIAM J. Math. Anal. {\bf 40} 2105--2131 (2009).
%
\bibitem{BBW12} C.\ Bennewitz, B.\ M.\ Brown, and R.\ Weikard, {\it Scattering and inverse 
scattering for a left-definite Sturm--Liouville problem}, J. Diff. Eq. {\bf 253}, 2380--2419 
 (2012).
%
\bi{BE83} C.\ Bennewitz and W.\ N.\ Everitt, {\it On second-order left-definite boundary value problems}, 
in {\it Ordinary Differential Equations and Operators, (Proceedings, Dundee, 1982)}, W.\ N.\ Everitt 
and R.\ T.\ Lewis (eds.), Lecture Notes in Math., Vol.\ 1032, Springer, Berlin, 1983, pp.\ 31--67. 
%
\bi{Bo46} G.\ Borg, {\it Eine Umkehrung der Sturm--Liouvilleschen
Eigenwertaufgabe}, Acta Math. {\bf 78}, 1--96 (1946) (German). 
%
\bi{Bo52} G.\ Borg, {\it Uniqueness theorems in the spectral theory of
$y''+(\lambda-q(x))y=0$}, Proc.\ 11th Scandinavian Congress of
Mathematicians, Johan Grundt Tanums Forlag, Oslo, 1952, pp.\ 276--287. 
%
\bi{dB60} L.\ de Branges, {\it Some Hilbert spaces of entire functions I}, Trans. Amer. 
Math. Soc. {\bf 96}, 259--295 (1960).
%
\bi{dB68} L.\ de Branges, {\it Hilbert spaces of entire functions}, Prentice-Hall, Inc., Englewood Cliffs, NJ, 1968.
%
\bi{BPW02} B.\ M.\ Brown, R.\ A.\ Peacock, and R.\ Weikard, {\it A local Borg--Marchenko theorem 
for complex potentials}, J. Comp. Appl. Math. {\bf 148}, 115--131 (2002). 
%
\bi{CS89} K.\ Chadan and P.\ C.\ Sabatier, {\it Inverse Problems in Quantum
Scattering Theory},  2nd ed., Springer, New York, 1989.
%
\bi{CM93} C.\ F.\ Coleman and J.\ R.\ McLaughlin, {\it Solution of the inverse spectral problem for an impedance with integrable derivative, I, II}, Comm. Pure Appl. Math. 
{\bf 46},145--184, 185--212 (1993).
%
\bi{Dy70} H.\ Dym, {\it An introduction to de Branges spaces of entire functions with applications to differential 
equations of the Sturm--Liouville type}, Adv. Math. {\bf 5}, 395--471 (1970).
%
\bi{DMcK76} H.\ Dym and H.\ P.\ McKean, {\it Gaussian Processes, Function Theory, and the 
Inverse Spectral Problem}, Academic Press, New York, 1976.
%
\bi{E12} J.\ Eckhardt, {\it Inverse uniqueness results for Schr\"odinger operators using de 
Branges theory}, Complex Anal. Oper. Theory (to appear), 
\doi{10.1007/s11785-012-0265-3}. 
%
\bi{E12a} J.\ Eckhardt, {\it Direct and inverse spectral theory of singular left-definite 
Sturm--Liouville operators}, J. Diff. Eq. {\bf 253}, 604--634 (2012).  
%
\bi{EGNT12} J.\ Eckhardt, F.\ Gesztesy, R.\ Nichols, and G.\ Teschl, {\it Supersymmetry and 
Schr\"odinger-type operators with distributional matrix-valued potentials}, \arxiv{1206.4966}. 
%
\bi{EGNT12a} J.\ Eckhardt, F.\ Gesztesy, R.\ Nichols, and G.\ Teschl, {\it Weyl--Titchmarsh 
theory for Sturm--Liouville operators with distributional potentials}, 
Opuscula Math. {\bf 33}, 467--563 (2013). 
%
\bi{ET12} J.\ Eckhardt and G.\ Teschl, {\it Sturm--Liouville operators with measure-valued coefficients}, J. Analyse Math. {\bf 120}, 151--224 (2013).
%
\bi{ET12a} J.\ Eckhardt and G.\ Teschl, {\it Uniqueness results for one-dimensional 
Schr\"odinger operators with purely discrete spectra}, Trans. Amer. Math. Soc. {\bf 365}, 
3923--3942 (2013).
%
\bi{EM99} W.\ N.\ Everitt and L.\ Markus, {\it Boundary Value Problems and Symplectic Algebra 
for Ordinary Differential and Quasi-Differential Operators}, Math.\ Surv.\ and Monographs, 
Vol.\ 61, Amer.\ Math.\ Soc., RI, 1999. 
%
\bi{FHMP09} C.\ Frayer, R.\ O.\ Hryniv, Ya.\ V.\ Mykytyuk, and P.\ A.\ Perry, {\it Inverse scattering 
for Schr\"odinger operators with Miura potentials: I. Unique Riccati representatives and ZS-AKNS system}, Inverse Probl. {\bf 25}, 115007 (2009), 25pp. 
%
\bi{Ge07} F.\ Gesztesy, {\it Inverse spectral theory as influenced by Barry
Simon}, in {\it Spectral Theory and Mathematical Physics: A Festschrift in Honor of Barry 
Simon's 60th Birthday. Ergodic Schr\"odinger Operators, Singular Spectrum, Orthogonal 
Polynomials, and Inverse Spectral Theory}, F.\ Gesztesy, P.\ Deift, C.\ Galvez, P.\ Perry, 
and W.\ Schlag (eds.), Proceedings of Symposia in Pure Mathematics, Vol.\ 76/2, 
Amer.\ Math.\ Soc., Providence, RI, 2007, pp.\ 741--820.
%
\bi{GKM02} F.\ Gesztesy, A.\ Kiselev, and K.\ A.\ Makarov, {\it Uniqueness results 
for matrix-valued Schr\"odinger, Jacobi, and Dirac-type operators}, Math. Nachr. 
{\bf 239--240}, 103--145 (2002).
%
\bi{GS96} F.\ Gesztesy and B.\ Simon, {\it Uniqueness theorems in inverse
spectral theory for one-dimensional Schr\"odinger operators},  
Trans. Amer. Math. Soc. {\bf 348}, 349--373 (1996).
%
\bi{GS99} F.\ Gesztesy and B.\ Simon, {\it On the determination of a
potential from three spectra}, in {\it Differential Operators and Spectral Theory}, 
V.\ Buslaev, M.\ Solomyak, and D.\ Yafaev (eds.), Amer. Math. Soc.  
Transl., Ser.\ 2, {\bf 189}, 85--92 (1999). 
%
\bi{GS00} F.\ Gesztesy and B.\ Simon, {\it On local Borg--Marchenko
uniqueness results}, Commun. Math. Phys. {\bf 211}, 273--287 (2000).
%
\bi{GS00a} F.\ Gesztesy and B.\ Simon, {\it A new approach to inverse
spectral theory, II.  General real potentials and the connection to the
spectral measure}, Ann. Math. {\bf 152}, 593--643 (2000).
%
\bi{GT} F.\ Gesztesy and E.\ Tsekanovskii, {\it On matrix-valued Herglotz functions}, Math.\ 
Nachr.\ {\bf 218}, 61--138 (2000).
%
\bi{GZ06} F.\ Gesztesy and M.\ Zinchenko, {\it On spectral theory for Schr\"odinger
operators with strongly singular  potentials}, Math. Nachr. {\bf 279}, 1041--1082  (2006).
%
\bi{Gl91} G.\ M.\ L.\ Gladwell, {\it The application of SchurÕs algorithm to an inverse eigenvalue problem}, Inverse Probl. {\bf 7}, 557--565 (1991).  
%
\bi{Ha84} O.\ Hald, {\it Discontinuous inverse eigenvalue problems}, Commun. Pure. 
Appl. Math. {\bf 37} 539--577 (1984).
%
\bi{Hr11} R.\ O.\ Hryniv, {\it Analyticity and uniform stability in the inverse singular 
Sturm--Liouville spectral problem}, Inverse Probl. {\bf 27}, 065011 (2011), 25pp.
%
\bi{HM03} R.\ O.\ Hryniv and Ya.\ V.\ Mykytyuk, {\it Inverse spectral problems for 
Sturm--Liouville operators with singular potentials}, Inverse Probl. {\bf 19}, 665--684 (2003).
%
\bi{HM03a} R.\ O.\ Hryniv and Ya.\ V.\ Mykytyuk, {\it Inverse spectral problems for Sturm--Liouville
operators with singular potentials. Part III: Reconstruction by three spectra}, J. Math. Anal. Appl. {\bf 284}, 626--646 (2003). 
%
\bi{HM04aa} R.\ O.\ Hryniv and Ya.\ V.\ Mykytyuk, {\it Inverse spectral problems for 
Sturm--Liouville 
operators with singular potentials. II. Reconstruction by two spectra}, in {Functional Analysis and Its Applications}, V.\ Kadets and W.\ {\'Z}elazko (eds.), 
North-Holland Math. Stud., Vol.\ 197, Elsevier, Amsterdam, 2004, pp.\ 97--114.
%
\bi{HM04} R.\ O.\ Hryniv and Ya.\ V.\ Mykytyuk, {\it Half-inverse spectral problems for Sturm--Liouville operators with singular potentials}, Inverse Probl. {\bf 20}, 1423--1444 (2004).
%
\bi{HM04a} R.\ O.\ Hryniv and Ya.\ V.\ Mykytyuk, {\it Transformation operators for Sturm--Liouville operators with singular potentials}, Math. Phys. Anal. Geom. {\bf 7}, 119--149 (2004). 
%
\bi{HM06} R.\ O.\ Hryniv and Ya.\ V.\ Mykytyuk, {\it Inverse spectral problems for Sturm--Liouville operators with singular potentials. IV. Potentials in the Sobolev space scale}, 
Proc.  Edinburgh Math. Soc. (2) {\bf 49},  309--329 (2006).
%
\bi{HM06a} R.\ O.\ Hryniv and Ya.\ V.\ Mykytyuk, {\it Eigenvalue asymptotics for Sturm--Liouville operators with singular potentials}, J. Funct. Anal. {\bf 238}, 27--57 (2006).
%
\bi{HMP11} R.\ O.\ Hryniv, Ya.\ V.\ Mykytyuk, and P.\ A.\ Perry, {\it Inverse scattering 
for Schr\"odinger operators with Miura potentials, II. Different Riccati representatives}, Commun. Part. Diff. Eq. {\bf 36}, 1587--1623 (2011). 
%
\bi{HMP11a} R.\ O.\ Hryniv, Ya.\ V.\ Mykytyuk, and P.\ A.\ Perry, {\it Sobolev mapping properties of the scattering transform for the Schr\"odinger equation}, in {\it Spectral 
Theory and Geometric Analysis}, M.\ Braverman, L.\ Friedlander, T.\ Kappeler, P.\ Kuchment, P.\ Topalov, and J.\ Weitsman (eds.), Contemp. Math. {\bf 535}, 79--93 (2011).  
%
\bi{KPST05} T.\ Kappeler, P.\ Perry, M.\ Shubin, and P.\ Topalov, {\it The Miura map on the line}, Int. Math. Res. Notices, 2005, no.\ 50, 3091--3133. 
%
\bi{KST12} A.\ Kostenko, A.\ Sakhnovich, and G.\ Teschl, {\it Weyl--Titchmarsh theory for 
Schr\"{o}dinger operators with strongly singular potentials}, Int. Math. Res. Notices {\bf 2012}, no.\ 8, 1699--1747.
%
\bi{Kr52} M.\ G.\ Krein, {\it On the indeterminate case of the Sturm--Liouville boundary problem in the interval $(0,\infty)$}, (Russian) Izvestiya Akad.\ Nauk SSSR Ser. Mat. 
{\bf 16}, 293--324 (1952).
%
\bi{Le96} B.\ Ya.\ Levin, {\it Lectures on Entire Functions}, Transl. Math. Monographs, 
Vol.\ 150, Amer.\ Math.\ Soc., Providence, RI, 1996.
%
\bi{Le49} N.\ Levinson, {\it The inverse Sturm--Liouville problem},
Mat. Tidskr. B, 25--30 (1949). 
%
\bi{Le68} B.\ M.\ Levitan, {\it On the determination of a Sturm--Liouville
equation by two spectra}, Amer. Math. Soc. Transl. Ser.\ 2, {\bf 68}, 1--20 (1968).  
%
\bi{Le87} B.\ M.\ Levitan, {\it Inverse Sturm--Liouville Problems}, VNU Science Press, Utrecht, 1987.
%
\bi{LG64} B.\ M.\ Levitan and M.\ G.\ Gasymov, {\it Determination of a differential equation by two of its spectra}, Russian Math. Surv. {\bf 19:2}, 1--63 (1964).  
%
\bi{Ma05} M.\ M.\ Malamud, {\it Uniqueness of the matrix Sturm--Liouville 
equation given a part of the monodromy matrix, and Borg type results}, 
in {\it Sturm--Liouville Theory: Past and Present}, W.\ O.\ Amrein, A.\ M.\ Hinz, 
and D.\ B.\ Pearson (eds.), Birkh\"auser, Basel, 2005, pp.\ 237--270.
%
\bi{Ma50} V.\ A.\ Marchenko, {\it Certain problems in the theory of second-order differential 
operators}, Doklady Akad. Nauk SSSR {\bf 72}, 457--460 (1950) (Russian).
%
\bi{Ma73} V.\ A.\ Marchenko, {\it Some questions in the theory of
one-dimensional linear differential operators of the second order,
I.}, Trudy Moskov. Mat. Ob\u s\u c. {\bf 1}, 327--420 (1952)
(Russian); Engl. transl. in Amer. Math. Soc. Transl. Ser.\ 2, {\bf 101}, 1--104 (1973).
%
\bibitem{Ma11} V.\ A.\ Marchenko, {\it Sturm--Liouville Operators and
Applications}, rev.\ ed., AMS-Chelsea, Amer. Math. Soc., Providence, RI, 2011.
%
\bi{MT10} Ya.\ V.\ Mykytyuk and N.\ S.\ Trush, {\it Inverse spectral problems for Sturm--Liouville 
operators with matrix-valued potentials}, Inverse Probl. {\bf 26}, 015009 (2010), 36pp. 
%
\bi{Na55} I.\ P.\ Natanson, {\it Theory of Functions of a Real Variable}, F.\ Ungar Publishing Co., New York, 1955.
%
\bi{Pi99} V.\ Pivovarchik, {\it An inverse Sturm--Liouville problem by three 
spectra}, Integral Eq. Operator Th. {\bf 34}, 234--243 (1999).  
%
\bi{Pi99b} V.\ N.\ Pivovarchik, {\it Reconstruction of the potential of the Sturm--Liouville equation 
from three spectra of boundary value problems}, Funct. Anal. Appl. {\bf 33}, 233--235 (1999).
%
\bi{Pi06} V.\ Pivovarchik, {\it A special case of the Sturm--Liouville inverse problem by three spectra: uniqueness results}, Proc. Roy. Soc. Edinburgh, {\bf A 136}, 181--187 (2006).
%
\bi{Re02} C.\ Remling, {\it Schr\"{o}dinger operators and de Branges spaces}, J. Funct. 
Anal. {\bf 196}, 323--394 (2002).
%
\bi{RR94} M.\ Rosenblum and J.\ Rovnyak, {\it Topics in Hardy classes and univalent 
functions}, Birkh\"{a}user, Basel, 1994.
%
\bi{RS92} W.\ Rundell and P.\ E.\ Sacks, {\it The reconstruction of Sturm--Liouville operators}, Inverse Probl.  {\bf 8}, 457--482 (1992).
%
\bi{SS99} A.\ M.\ Savchuk and A.\ A.\ Shkalikov, {\it Sturm--Liouville operators with singular potentials}, Math. Notes {\bf 66}, no.\ 6, 741--753 (1999).
%
\bi{SS05} A.\ M.\ Savchuk and A.\ A.\ Shkalikov, {\it  Inverse problem for Sturm--Liouville operators with distribution potentials: reconstruction from two spectra}, Russ. J. Math. 
Phys. {\bf 12}, no.\ 4, 507--514 (2005). 
%
\bi{SS06} A.\ M.\ Savchuk and A.\ A.\ Shkalikov, {\it On the eigenvalues of the Sturm--Liouville operator with potentials from Sobolev spaces}, Math. Notes {\bf 80}, 814--832 (2006).
%
\bi{SS08} A.\ M.\ Savchuk and A.\ A.\ Shkalikov, {\it On the properties of maps connected with 
inverse Sturm--Liouville problems}, Proc. Steklov Inst. Math. {\bf 260}, no.\ 1, 218--237 (2008). 
%
\bi{SS10} A.\ M.\ Savchuk and A.\ A.\ Shkalikov, {\it Inverse problems for Sturm--Liouville operators with potentials in Sobolev spaces: uniform stability}, Funct. Anal. Appl. 
{\bf 44}, no.\ 4, 270--285 (2010).
%
\bi{Si99} B.\ Simon, {\it A new approach to inverse spectral theory, I. 
Fundamental formalism}, Ann. Math. {\bf 150}, 1029--1057 (1999).
%
\bi{Te09} G.\ Teschl, {\it Mathematical Methods in Quantum Mechanics; With Applications 
to Schr\"odinger Operators}, Graduate Studies in Math., Amer. Math. Soc., Vol.\ 99, RI, 2009.
%
\bi{We87} J.\ Weidmann, {\it Spectral Theory of Ordinary Differential Operators}, Lecture Notes in Math., Vol.\ 1258, Springer, Berlin, 1987.
%
\end{thebibliography}
\end{document}